\DeclareMathAlphabet{\mathpzc}{OT1}{pzc}{m}{it}
\numberwithin{equation}{section}
\theoremstyle{plain}
\newtheorem{thm}{Theorem}[section]
\newtheorem{lem}[thm]{Lemma}
\newtheorem{cor}[thm]{Corollary}
\newtheorem{prop}[thm]{Proposition}
\theoremstyle{definition}
\newtheorem{defn}{Definition}[section]
\newtheorem{exam}[thm]{Example}
\newtheorem{rmk}[thm]{Remark}
\newcommand\Fb{\mathbb{F}}
\DeclareMathAlphabet{\mathpzc}{OT1}{pzc}{m}{it}
\newcommand\ot{\mathfrak{o}}
\DeclareMathOperator{\R}{{\mathbb{R}}}
\DeclareMathOperator{\rank}{rank}
\DeclareMathOperator{\Li}{\mathfrak{L}}
\DeclareMathOperator{\Gf}{\mathbf{G}}
\DeclareMathOperator{\zt}{\mathfrak{z}}
\DeclareMathOperator{\st}{\mathfrak{s}}
\DeclareMathOperator{\N}{\mathrm{N}}
\DeclareMathOperator{\Z}{\mathbb{Z}}
\DeclareMathOperator{\at}{\mathfrak{a}}
\DeclareMathOperator{\pt}{\mathfrak{p}}
\DeclareMathOperator{\ft}{\mathfrak{f}}
\DeclareMathOperator{\C}{\mathbb{C}}
\newcommand\GL{\mathbf{GL}}
\newcommand\gt{\mathfrak{g}}
\newcommand\Rad{\mathpzc{R}}
\newcommand\hg{\mathfrak{h}}
\newcommand\iq{\mathpzc{i}}
\newcommand\spt{\mathfrak{sp}}
\newcommand\Symm{\mathpzc{S}}
\newcommand\Jd{\mathrm{J}}
\newcommand\qq{\mathpzc{q}}
\newcommand\pq{\mathpzc{p}}
\newcommand\mt{\textswab{m}}
\newcommand\Ji{\mathpzc{J}}
\newcommand\Sb{\mathbf{S}}
\newcommand\Id{\mathrm{I}}
\newcommand\diag{\mathrm{diag}}
\newcommand\bt{\mathfrak{b}}
\newcommand\bq{\mathpzc{b}}
\newcommand\nt{\mathfrak{n}}
\newcommand\pg{\mathfrak{p}}
\newcommand\gl{\mathfrak{gl}}
\newcommand\trac{\mathrm{trace}}
\newcommand\slt{\mathfrak{sl}}
\newcommand\Az{\mathpzc{S}(V)}
\newcommand\Bt{\mathfrak{B}}
\newcommand\Ki{\mathpzc{K}}
\newcommand\vq{\mathpzc{v}}
\newcommand\wq{\mathpzc{w}}
\newcommand\uq{\mathpzc{u}}
\newcommand\T{\mathrm{T}}
\newcommand\Lt{\mathfrak{L}}
\newcommand\Ft{\mathfrak{F}}
\newcommand\Der{\mathpzc{Der}}
\newcommand{\pnt}{{\mathrm{p}}}
\newcommand\aq{\mathpzc{a}}
\newcommand\eq{\mathpzc{e}}
\newcommand\pri{\mathfrak{p}}
\newcommand\Mi{\mathpzc{M}}
\newcommand\M{\mathrm{M}}
\newcommand\Xx{\mathpzc{X}}
\newcommand\epi{\epsilonup}
\newcommand\zq{\mathpzc{z}}
\newcommand\Ann{\mathpzc{Ann}}
\newcommand\sq{\mathpzc{s}}
\newcommand\cq{\mathpzc{c}}
\newcommand\K{\mathbb{K}}
\newcommand\Nt{\mathfrak{N}}
\newcommand{\per}{\pmb{\cdot}}
\newcommand\kq{\mathrm{k}}
   \def\DHLhksqrt#1#2{\setbox0=\hbox{$#1\sqrt{#2\,}$}\dimen0=\ht0
     \advance\dimen0-0.2\ht0
     \setbox2=\hbox{\vrule height\ht0 depth -\dimen0}%
     {\box0\lower0.4pt\box2}}
\title[Prolongations of FGLA]{
$\Li$-prolongations of 
graded Lie algebras}
\author{Stefano Marini, Costantino Medori, Mauro Nacinovich}
\address{Stefano Marini: Dipartimento di Scienze Matematiche, Fisiche e Informatiche\\ 
Unit\`a di Matematica e Informatica
\\
Universit\`a degli Studi di Parma\\ Parco Area delle Scienze 53/A (Campus), 43124 Parma
 (Italy)} \email{stefano.marini@unipr.it}
\address{Costantino Medori:
Dipartimento di Scienze Matematiche, Fisiche e Informatiche\\
Unit\`a di Matematica e Informatica
\\ Universit\`a degli Studi di Parma\\ Parco Area delle Scienze 53/A (Campus), 43124 Parma
 (Italy)} \email{costantino.medori@unipr.it}
\address{Mauro Nacinovich:
Dipartimento di Matematica\\ II Universit\`a degli Studi di Roma
``Tor Ver\-ga\-ta''\\ Via della Ricerca Scientifica\\ 00133 Roma
(Italy)}
\email{nacinovi@mat.uniroma2.it}
\subjclass[2000]{Primary: 17B70, 53C10;
Secondary:  53C15, 70G65}
\keywords{$\mathbf{G}$-structure, Fundamental graded Lie algebra, Tanaka's prolongation}
\date\today
\begin{document}

\begin{abstract}
 In this paper we translate the necessary and sufficient conditions of
 Tanaka's theorem on the
 finiteness of effective prolongations of a fundamental graded Lie algebras
 into computationally effective criteria, involving  the rank 
 of some matrices that can be explicitly constructed. 
 Our results would apply to  geometries, which are defined by assigning
 a structure algebra on the contact distribution.
\end{abstract}

\maketitle

 \tableofcontents

\section*{Introduction}
The concept of $\Gf$-structure was introduced to treat various
interesting differential
geometrical structures in a unified manner (see e.g. \cite{kn1964a,kn1964,kn1965,kn1965a,kn1965b,kn1966,Kob}). 
At a chosen point $\pnt_0$ of a manifold $M,$ 
a $\Gf$-strucure 
can be 
described by the datum of a Lie algebra 
$\Li\,{=}\,{Lie(\Gf)}$ of 
infinitesimal transformations,
acting as linear maps on the tangent space $V{=}T_{\pnt_0}M.$
It is convenient to envision 
$V{\oplus}\Li$ as an Abelian \textit{extension} of $\Li$ and to look for
its maximal effective prolongation to read the differential invariants of the structure.
When this turns out to be finite dimensional, 
Cartan's method can be used to study the automorphism
group  and the equivalence problem for the corresponding $\Gf$-structure.
This \textit{algebraic} point of view was taken up systematically in \cite{GS,Sternberg},
and  
pursued 
in \cite{Tan67,Tan70,AMN06,Warhurst2007,Ottazzi2010,Kruglikov2011,Ottazzi2011a,Ottazzi2011,
alek} 
to study generalised contact 
and $CR$ structures: the datum of a smooth distribution 
defines on $T_{\pnt_0}M$ a structure of $\Z$-graded nilpotent Lie algebra 
$\mt{=}{\sum}_{\pq=1}^\muup\gt_{{-}\pq}$;
moreover, the \textit{structure group} yields 
a Lie 
algebra $\gt_{0}$
of  $0$-degree
derivations of $\mt,$
that encodes the geometry of 
$M$.
The condition that the distribution be completely non integrable,
or satisfies \textit{H\"ormander's condition} at $\pnt_0,$ 
translates into the fact that $\mt$ is \textit{fundamental}, i.e.
that the part 
$\gt_{-1},$ tangent
at $\pnt_0$ to  
the contact distribution, generates $\mt.$ 
In this case the action of the structure algebra on $\mt$
is uniquely determined by its action on $\gt_{-1}.$  Thus
we prefer to consider general Lie subalgebras $\Li$ of
$\gl_{\K}(\gt_{-1})$ and look for maximal $\Li$-prolongations 
of $\mt,$ i.e. prolongations with $\gt_{0}\,{\subseteq}\,\Li.$ 
\par
Having recently proved in 
\cite{NMSM} a finiteness result for the automorphism
group of 
a class of homogeneous
$CR$ manifolds by applying 
a result of N.Tanaka to a suitably filtered object, we got interested in
the general preliminary
problem of the finiteness of the effective 
$\Li$-prolongations of general fundamental
$\Z$-graded Lie algebras, starting from 
rereading 
Tanaka's
seminal paper \cite{Tan70}. \par 
Our main results show that, 
given a subalgebra  $\Li$ 
of $\gl_{\K}(\gt_{-1}),$ 
 the finiteness of the maximal 
$\Li$-prolongation is equivalent to a rank condition on some
matrices which can be explicitly constructed.\par 
Finiteness of Tanaka's prolongations
was already discussed 
in several different special contexts 
(see e.g. \cite{Ottazzi2011} and references therein). 
Here we tried to take
a  quite general perspective and gave  effective methods 
toward computationally dealing with this problem.
\smallskip\par 
Let us describe the contents of this work. 
Being essentially algebraic, all results 
are formulated for an arbitrary ground 
field $\K$ of characteristic zero.
\par
In \S\ref{sec-fund} we describe  
fundamental graded Lie algebras as quotients of the free Lie algebra
$\ft(V)$ generated by a vector space $V$, by its graded ideals~$\Ki.$ 
After getting from \cite{Bou68}
the maximal effective prolongation $\Ft(V)$ of $\ft(V),$
we  use this result to characterise,  in Theorem~\ref{thm-tan-1-8},
 the maximal effective $\Li$-pro\-lon\-ga\-tion of $\mt$ 
in terms of $\Ft(V)$ and $\Ki.$ \par 
In \S\ref{sect2} we review the finiteness theorem 
proved by N.Tanaka in \cite{Tan70}. Here we make explicit
the role of duality, which allows 
to reduce to commutative algebra. This was hinted in
Serre's appendix to \cite{GS} and also in \cite{Tan70}, 
but, in our opinion, in a way which left part of the arguments
rather obscure. We hope that our exposition would make
this important theorem  more understandable.
Theorem~\ref{thm-t-4-4}
reduces the finiteness of the maximal 
$\Li$-prolongation to the analogous problem 
for an $\Li'${-}prolongation of an Abelian Lie algebra.
As an application, 
we provide 
a comparison result for 
prolongations with different $\Li$'s and $\Ki$'s.
In the following sections we provide 
a criterion for studying prolongations of Abelian
Lie algebras and 
to obtain an explicit description of $\Li'$ 
after knowing $\Li$ and~$\mt.$ 
\par
In \S\ref{sec5a} we get an effective 
crierion for the finiteness of 
$\Li$-prolongations of fundamental graded Lie algebras
of the first kind, using duality to reduce  the question to the study 
the co-primary decomposition of finitely generated modules over the
polynomials: this boils down  
to computing the rank
of a matrix $M_1(\Li,\zq)$ 
of first degree homogeneous polynomials  associated to~$\Li$ 
(see Theorem~\ref{thm-t-4-6}).
In this way, a necessary and sufficient condition 
for finite prolongation
can be formulated as an ellipticity condition 
(in the sense of Kobayashi, cf. \cite[p.4]{Kob})
for $\Fb\,{\otimes}\,\Li',$
where $\Li'$ is 
the \textit{reduced} structure algebra
$\Li'.$ 
We illustrate this procedure 
by studying the classical examples 
of the $\Gf$-structures treated, e.g.
in \cite{Kob, Sternberg}.
In \cite{Ottazzi2011} the authors
 obtained this rank condition for contact
structures, when $\Li$ is the full linear group of $\gt_{-1}.$
\par
 When $\mt$ has kind $\muup{>}1,$ 
 the effect of the terms $\gt_{-\pq}$ with $\pq{\geq}2$ is of
 \textit{restricting the 
structure algebra $\Li$ 
to a smaller algebra $\Li'\! \! .$} 
The final criterion is obtained by 
applying Theorem~\ref{thm-t-4-6} to $\Li'\! \! .$
 In fact, there is a difference in the way the $\gt_{-\pq}$ summands
 contribute to $\Li'$ 
 between the cases $\pq{=}2$ and $\pq{>}2.$ \par
In \S\ref{sec5} we study 
the  maximal $\gl_{\K}(V)$-prolongation in the case $\muup{=}2.$ 
We show in Theorem~\ref{thm-tan-4-8} 
that the condition can be expressed in terms of the rank of
the Lie product, considered as 
an alternate bilinear form on $\Fb{\otimes}\gt_{-1},$ 
for the algebraic
closure $\Fb$ of the ground field $\K.$ 
We show by an example 
that this rank condition, that was known to be necessary
over $\K$ (see e.g. \cite{Kob}),  
becomes necessary \textit{and} sufficient 
when stated over~$\Fb.$  
As in \S\ref{sec5a}, 
there is an equivalent formulation in terms
of the rank a suitable matrix~$M_2(\Ki,\zq).$   
\par 
In \S\ref{sect7} we show that one can take into account the 
non zero summands $\gt_{-\pq}$ in $\mt,$ with $\pq\geq{3},$ 
by  adding to $M_2(\Ki,\zq)$ a  matrix $M_3(\Ki,\zq),$ 
which has rank $n{=}\dim\gt_{{-}1}$ when $\zq$ 
does not belong to a subspace $W(\Ki)$ of $\Fb{\otimes}V.$ 
Thus the finite dimensionality criterion of  
\S\ref{sec5} has to be checked on
a smaller subspace of~$\zq$'s.
\par
 In \S\ref{sect8},
 Theorem~\ref{thm-tan-6.1} collects the partial 
 results of the previous sections,
to state a finiteness criterion 
for the maximal $\Li$-prolongation 
of an $\mt$ of any finite kind $\muup$  
in terms of the rank of the matrix 
$(M_1(\Li,\zq),M_2(\Ki,\zq),M_3(\Ki,\zq))$ which is obtained
by putting together the contributions 
coming from $\Li$ (kind one), $\gt_{-2}$ (kind two) and
${\sum}_{\pq\leq{-3}}\gt_{\pq}$ (kind ${>}2$).

\subsubsection*{Notation} 
\begin{itemize}
 \item We shall indicate by $\K$ a field of characteristic zero and by $\Fb$ its algebraic closure.
 \item The acronym FGLA stands for \textit{fundamental graded Lie algebra}.
 \item The acronym EPFGLA stands for \textit{effective
 prolongation of a fundamental graded Lie algebra}.
 \item $\T(V)={\sum}_{\pq=0}^\infty\T^{\pq}(V)$ is the tensor algebra of the vector space $V.$ 
 \item $\Lambda(V)={\sum}_{\pq=0}^n\Lambda^{\pq}(V)$ is the Grassmann algebra of
 an $n$-dimensional vector space $V.$ 
 \item $\gl_{\K}(V)$ is the Lie algebra of $\K$-linear endomorphisms of the $\K$-linear space $V.$
 \item $\gl_n(\K)$ is the Lie algebra of $n{\times}n$ matrices with entries in the field $\K.$ 
\end{itemize}
\section{Fundamental graded Lie algebras and prolongations} \label{sec-fund}
\begin{defn}[see \cite{Tan70}] 
A $\Z$-graded Lie $\K$-algebra 
\begin{equation}\label{eq-ta-1-1} \tag{FGLA}
 \mt={\sum}_{\pq\geq{1}}\gt_{-\pq},
\end{equation}
is called \emph{fundamental} if $\gt_{-1}$
is 
finite dimensional 
and generates $\mt$
as a Lie $\K$-algebra. 
We call $\muup{=}\sup\{\pq\mid\gt_{{-}\pq}{\neq}\{0\}\}$
the \emph{kind} of $\mt.$ 
\end{defn}
At  variance with \cite{Tan70}, we do not require here that $\mt$ be finite dimensional.
The only FGLA with $\dim(\gt_{-1}){=}1$ is the trivial one-dimensional Lie algebra. We will
consider in the following FGLA's $\mt$ with $\dim(\mt){>}1.$ 
\par\smallskip
Let $V$ be a vector space of finite dimension
$n{\geq}2$ over $\K$ 
and  
$\ft(V)$ the free  Lie $\K$-algebra 
generated by $V$ 
(see e.g. \cite{n1998lie, Reu1993}).
It is an FGLA 
of infinite kind with the natural 
\mbox{$\Z${-}gradation} 
\begin{equation}
 \ft(V)={\sum}_{\pq=1}^\infty \ft_{-\pq}(V).
\end{equation}
obtained by setting $\ft_{-1}(V){=}V.$  
It
is characterised by the universal property: 
\begin{prop}\label{prop-ta-1-1}
To every Lie $\K$-algebra $\Li$  
and to every $\K$-linear map $\phiup{:}V{\to}\Li,$ there
is a unique Lie algebras homomorphism $\tilde{\phiup}:\ft(V){\to}\Li$ extending $\phiup.$
\qed
\end{prop}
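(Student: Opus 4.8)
The plan is to realize $\ft(V)$ concretely as a Lie subalgebra of an associative algebra and then invoke the universal property of the tensor algebra. Recall (e.g.\ from \cite{Reu1993}) that $\ft(V)$ may be described as the smallest Lie subalgebra of $\T(V)$ --- endowed with the commutator bracket $[a,b]=ab-ba$ --- that contains $\T^1(V)=V$; with this description the $\Z$-gradation of $\ft(V)$ is the one induced by that of $\T(V)$, and, by construction, $V$ generates $\ft(V)$ as a Lie algebra. Uniqueness of $\tilde{\phiup}$ is then immediate: two Lie algebra homomorphisms $\ft(V)\to\Li$ that coincide on $V$ coincide on every iterated bracket of elements of $V$, hence everywhere.

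For existence I would pass to the universal enveloping algebra $U(\Li)$, with canonical map $\iota\colon\Li\to U(\Li)$. The $\K$-linear map $\iota\circ\phiup\colon V\to U(\Li)$ extends, by the universal property of the tensor algebra, to a homomorphism of unital associative $\K$-algebras $\Phi\colon\T(V)\to U(\Li)$. Viewing source and target as Lie algebras under commutators, $\Phi$ is a homomorphism of Lie algebras; its restriction to the Lie subalgebra $\ft(V)\subseteq\T(V)$ is therefore a Lie algebra homomorphism $\ft(V)\to U(\Li)$ carrying each $v\in V$ to $\iota(\phiup(v))$. By the Poincar\'e--Birkhoff--Witt theorem $\iota$ is injective, so we may regard $\Li$ as a Lie subalgebra of $U(\Li)$ closed under the bracket; since $\ft(V)$ is generated by $V$ and $\Phi(V)=\iota(\phiup(V))\subseteq\iota(\Li)$, the image $\Phi(\ft(V))$ is contained in $\iota(\Li)$. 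Composing with the inverse of $\iota$ on its image produces the desired homomorphism $\tilde{\phiup}\colon\ft(V)\to\Li$ extending $\phiup$.

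The only non-formal input --- and the one place where I expect the argument to rest on a genuine theorem rather than on formal manipulation --- is the injectivity of $\iota$, i.e.\ the fact that $\Li$ sits inside $U(\Li)$ as a Lie subalgebra; this is exactly the Poincar\'e--Birkhoff--Witt theorem, and it is what prevents $\Phi|_{\ft(V)}$ from taking values outside $\Li$. Everything else is the formal universal property of $\T(V)$ together with the description of $\ft(V)$ as a Lie subalgebra of $\T(V)$. One may alternatively avoid PBW entirely by defining $\ft(V)$ as the quotient of the free nonassociative $\K$-algebra on $V$ by the ideal generated by the anticommutativity and Jacobi relations, in which case the universal property holds essentially by construction; the substantive point then becomes, instead, the injectivity of the natural map $V\to\ft(V)$, which is Witt's theorem (see \cite{Bou68,Reu1993}).
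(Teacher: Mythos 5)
Your argument is correct, but note that the paper does not actually prove this proposition: it is stated with a \qed as the defining universal property of the free Lie algebra, with the burden of proof delegated to the cited references (Bourbaki, Reutenauer). What you supply is the standard textbook argument, and it is sound: uniqueness from the fact that $V$ generates $\ft(V)$, existence by extending $\iota\circ\phiup$ to an associative-algebra map $\T(V)\to U(\Li)$, restricting to the Lie subalgebra of $\T(V)$ generated by $V$, and using the Poincar\'e--Birkhoff--Witt embedding $\Li\hookrightarrow U(\Li)$ to land back in $\Li$. Your choice of definition of $\ft(V)$ --- as the Lie subalgebra of $\T(V)$ generated by $V$ under commutators --- is consistent with the paper's later identification of $\ft(V)$ with a $\GL_{\K}(V)$-invariant subspace of $\T(V)$ in characteristic zero, so there is no mismatch of conventions. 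You are also right to flag where the real content sits: with your definition the nontrivial input is PBW (more precisely, the injectivity of $\Li\to U(\Li)$, without which $\Phi|_{\ft(V)}$ could not be pulled back into $\Li$), whereas with the quotient-of-the-free-magma definition the universal property is formal and the substance migrates to the injectivity of $V\to\ft(V)$ and the identification with the subalgebra of $\T(V)$. Either route is acceptable; since the paper treats the proposition as a black box, your version adds a complete proof rather than diverging from one.
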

By Proposition~\ref{prop-ta-1-1} we can 
consider any finitely generated Lie algebra as a quotient of 
a free Lie algebra. 
\par 
\begin{rmk}
For any integer $\muup\geq{2},$ the direct sum 
\begin{equation}
 \ft_{[\muup]}(V)={\sum}_{\pq{\geq}\muup}\ft_{-\pq}(V)
\end{equation}
is a proper $\Z$-graded ideal in $\ft(V)$ and hence the quotient 
\begin{equation}
 \ft_{(\muup)}(V)=\ft(V){/}\ft_{[\muup{+}1]}(V)
\end{equation}
is an FGLA of kind $\muup,$ that is called
the \emph{free} FGLA \emph{of kind $\muup$}
generated by $V$
(see e.g.~\cite{Warhurst2007}).
\end{rmk}

Let $\mt$ be an FGLA over $\K$ 
and set $V=\gt_{-1}.$ Since $\mt$ is generated by $V,$  there is 
a surjective  homomorphism  
\begin{equation}
 \piup:\ft(V) \longrightarrow\!\!\!\!\rightarrow \mt,
\end{equation}
which preserves the gradations. 
Its kernel $\Ki$ is a $\Z$-graded ideal
\begin{equation}\label{t-6-10eq}
 \Ki={\sum}_{\pq{\geq}2}\Ki_{\;-\pq}
\end{equation}
and $\mt$ 
has finite kind $\muup$ iff there is a smaller integer
$\muup{\geq}1$ such that $$\Ki\supseteq\ft_{[\muup{+}1]}(V).$$  
\begin{defn}
An ideal $\Ki$ of $\ft(V)$ is said to be
\emph{$\muup$-cofinite} if $\ft_{[\muup{+}1]}(V)\subseteq\Ki.$ 
\end{defn}
\begin{lem}
The elements of 
 a string 
 $(\Ki_{\;-2},\hdots,\Ki_{\;-\muup},\hdots)$ of linear subspaces 
 $\Ki_{\;-\pq}{\subseteq}\ft_{-\pt}(V)$
 are the homogeneous summands of a $\Z$-graded ideal \eqref{t-6-10eq}
 of $\ft(V)$ if and only if they satisfy the compatibility
 condition 
\begin{equation}
\vspace{-20pt}
 [\Ki_{\; -\pq},V]\subseteq\Ki_{\;-(\pq{+}1)},\;\;\forall \pq{\geq}2.
\end{equation}
\qed
\end{lem}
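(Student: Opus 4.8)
The plan is to reduce the whole statement to the single structural fact that $\ft(V)$ is generated, as a Lie algebra, by its degree $-1$ part $V=\ft_{-1}(V)$. Because of this, a $\Z$-graded subspace $\Ki=\sum_{\pq\geq 2}\Ki_{-\pq}$ of $\ft(V)$ is automatically a graded ideal as soon as it is carried into itself by $\ad v$ for every $v\in V$; and the compatibility condition $[\Ki_{-\pq},V]\subseteq\Ki_{-(\pq+1)}$ is precisely the homogeneous shadow of that $\ad V$-stability. So the proof splits into an immediate necessity part and a sufficiency part where the generating property does the work.

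For necessity, suppose $\Ki=\sum_{\pq\geq 2}\Ki_{-\pq}$ is a $\Z$-graded ideal. Then $[\Ki_{-\pq},V]\subseteq[\Ki,\ft(V)]\subseteq\Ki$ for every $\pq\geq 2$; since the bracket is homogeneous of degree $-1$ on $V$, we also have $[\Ki_{-\pq},V]=[\Ki_{-\pq},\ft_{-1}(V)]\subseteq\ft_{-(\pq+1)}(V)$, and therefore $[\Ki_{-\pq},V]\subseteq\Ki\cap\ft_{-(\pq+1)}(V)=\Ki_{-(\pq+1)}$, which is the asserted condition.

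For sufficiency, assume $[\Ki_{-\pq},V]\subseteq\Ki_{-(\pq+1)}$ for all $\pq\geq 2$ and set $\Ki=\sum_{\pq\geq 2}\Ki_{-\pq}$; summing over $\pq$ and using skew-symmetry gives $[V,\Ki]\subseteq\Ki$. I would then introduce the normalizer $\Nt=\{X\in\ft(V)\mid[X,\Ki]\subseteq\Ki\}$ and check, using the Jacobi identity in the form $[[X,Y],\Ki]\subseteq[X,[Y,\Ki]]+[Y,[X,\Ki]]$, that $\Nt$ is a Lie subalgebra of $\ft(V)$. Since $V\subseteq\Nt$ and $V$ generates $\ft(V)$ (recall that by definition $\ft(V)$ is the free Lie algebra generated by $V$), it follows that $\Nt=\ft(V)$, i.e. $[\ft(V),\Ki]\subseteq\Ki$; and $\Ki$ is $\Z$-graded by construction since each $\Ki_{-\pq}$ is homogeneous. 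I do not expect a genuine obstacle: the only delicate point is the implication ``$\ad V$-stable $\Rightarrow$ ideal'', which is exactly where the fundamental (generating) property of $V$ enters, via the elementary remark that the normalizer of a subspace is always a subalgebra. Should one wish to avoid the normalizer, the same content can be obtained by proving $[\Ki_{-\pq},\ft_{-\rq}(V)]\subseteq\Ki_{-(\pq+\rq)}$ by induction on $\rq$, the inductive step combining the Jacobi identity with the identity $\ft_{-(\rq+1)}(V)=[V,\ft_{-\rq}(V)]$; the normalizer argument merely packages that induction cleanly.
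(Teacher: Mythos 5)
Your proof is correct; the paper states this lemma with no proof (it is marked as immediate), and your argument supplies exactly the intended justification. Both directions are sound: necessity follows from homogeneity of the bracket, and sufficiency from the fact that the normalizer $\Nt=\{X\in\ft(V)\mid [X,\Ki]\subseteq\Ki\}$ is a subalgebra containing $V$, which generates $\ft(V)$.
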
 
Since $\K$ has characteristic $0,$ we can 
canonically identify $\ft(V)$
with a $\GL_{\K}(V)$-invariant subspace of $T(V).$  

\begin{prop}\label{prop-tan-1.4}
 Every FGLA $\mt$ 
 is isomorphic to a quotient 
 $\ft(V){/}\Ki,$ for a $\Z${-}graded ideal $\Ki$ of the form~\eqref{t-6-10eq}.
 Two FGLA's $\ft(V){/}\Ki$ and 
 $\ft(V){/}\Ki'$ 
 are isomorphic
 if and only if $\Ki$ and $\Ki'$ are $\GL_{\K}(V)$-congruent. \qed
\end{prop}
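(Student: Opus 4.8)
The plan is to prove Proposition~\ref{prop-tan-1.4} in two halves, exactly matching its two sentences. For the first assertion, let $\mt$ be an FGLA and set $V=\gt_{-1}$. Because $\mt$ is fundamental, $V$ generates $\mt$, so the universal property of Proposition~\ref{prop-ta-1-1}, applied to the inclusion $V\hookrightarrow\mt$, yields a Lie algebra homomorphism $\piup\colon\ft(V)\to\mt$ whose image is the subalgebra generated by $V$, namely all of $\mt$; hence $\piup$ is surjective. To see that $\piup$ respects the gradings, I would note that the inclusion $V\hookrightarrow\mt$ sends $\ft_{-1}(V)=V$ into $\gt_{-1}$, and then argue by induction on $\pq$: since $\ft_{-\pq}(V)=[\ft_{-(\pq-1)}(V),\ft_{-1}(V)]$ and $[\gt_{-(\pq-1)},\gt_{-1}]\subseteq\gt_{-\pq}$, any element in degree $-\pq$ maps into degree $-\pq$. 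Therefore $\Ki=\ker\piup$ is a $\Z$-graded ideal, and since it lives in degrees $\le -2$ (as $\piup$ is an isomorphism $V\xrightarrow{\sim}\gt_{-1}$ in degree $-1$), it has the form \eqref{t-6-10eq}, giving $\mt\cong\ft(V)/\Ki$.

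For the second assertion I would argue both implications. If $\Ki$ and $\Ki'$ are $\GL_{\K}(V)$-congruent, pick $g\in\GL_{\K}(V)$ with $g(\Ki)=\Ki'$; by the universal property $g$ extends to a graded Lie algebra automorphism $\tilde g$ of $\ft(V)$ (its inverse being the extension of $g^{-1}$), and since $\tilde g(\Ki)=\Ki'$ it descends to an isomorphism $\ft(V)/\Ki\xrightarrow{\sim}\ft(V)/\Ki'$ of graded Lie algebras. Conversely, suppose $\Phi\colon\ft(V)/\Ki\xrightarrow{\sim}\ft(V)/\Ki'$ is an isomorphism of FGLAs, hence of graded Lie algebras, so it carries degree $-1$ to degree $-1$. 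The degree $-1$ part of $\ft(V)/\Ki$ is canonically $V$ (and likewise for $\Ki'$), so $\Phi$ restricts to a linear automorphism $g\in\GL_{\K}(V)$. The key point is that a graded Lie algebra homomorphism between quotients $\ft(V)/\Ki$ and $\ft(V)/\Ki'$ is completely determined by its degree $-1$ component, because $V$ generates; concretely, $g$ extends to a graded automorphism $\tilde g$ of $\ft(V)$, and the diagram
\begin{equation*}
\begin{tikzcd}
\ft(V) \arrow[r,"\tilde g"] \arrow[d] & \ft(V) \arrow[d]\\
\ft(V)/\Ki \arrow[r,"\Phi"] & \ft(V)/\Ki'
\end{tikzcd}
\end{equation*}
commutes because the two composites $\ft(V)\to\ft(V)/\Ki'$ agree on $V$ (both equal $g$ followed by projection) and $V$ generates $\ft(V)$. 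Commutativity forces $\tilde g(\Ki)\subseteq\Ki'$, and applying the same argument to $\Phi^{-1}$ and $g^{-1}$ gives the reverse inclusion, so $\tilde g(\Ki)=\Ki'$, i.e. $g(\Ki)=\Ki'$ under the identification of $\ft(V)$ with a $\GL_{\K}(V)$-invariant subspace of $T(V)$.

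The only genuinely delicate point, and the one I would state carefully as a lemma, is that a homomorphism of graded Lie algebras out of $\ft(V)/\Ki$ is determined by its restriction to the generating subspace $V$ in degree $-1$ — this is really just the universal property combined with the fact that $\ft(V)\to\ft(V)/\Ki$ is surjective, but it is what makes the passage from an abstract isomorphism of FGLAs to a concrete element of $\GL_{\K}(V)$ work. Everything else is formal: the extension $g\mapsto\tilde g$ and the compatibility of $\tilde g$ with the grading follow from Proposition~\ref{prop-ta-1-1} and the observation that $\GL_{\K}(V)$ acts on $T(V)$ preserving $\ft(V)$ and its homogeneous components. I expect no real obstacle beyond bookkeeping; the main thing to be careful about is not to conflate "isomorphism of Lie algebras" with "isomorphism of \emph{graded} Lie algebras" — but since both sides are generated in degree $-1$ and an FGLA isomorphism must preserve the lower central series, hence the grading, this subtlety resolves itself.
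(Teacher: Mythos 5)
Your proof is correct and follows essentially the route the paper takes: the first assertion is exactly the surjection $\piup\colon\ft(V)\twoheadrightarrow\mt$ constructed in the paragraph preceding the proposition, and the second assertion is the formal argument via the universal property that the paper leaves to the reader. One small caution about your closing remark: an abstract Lie algebra isomorphism of FGLA's preserves the lower central series, hence the \emph{filtration} $\sum_{\pq\geq k}\gt_{-\pq}$, but not the grading itself (e.g.\ $X\mapsto X+Z$ on the Heisenberg algebra is a non-graded automorphism); the correct way to pass from an abstract isomorphism to a graded one is through the associated graded of that filtration, which is canonically the original FGLA, while your main argument correctly works with graded isomorphisms throughout, so nothing downstream is affected.
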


\begin{defn} A $\Z$-graded Lie algebra over $\K$ 
\begin{equation}\label{eq-ta-1-6}
 \gt={\sum}_{\pq\in\Z}\gt_{\pq}.
\end{equation} 
is said to be an 
\emph{effective prolongation of 
a fundamental graded Lie algebra}~if 
\begin{itemize}
 \item[($i$)] $\gt_{{<}0}={\sum}_{\pq{<}0}\gt_{\pq}$ is a FGLA;
 \item[($ii$)] $\gt_{{\geq}0}={\sum}_{\pq{\geq}0}\gt_{\pq}$ is \emph{effective}: this means that 
\begin{equation*}
 \{X\in\gt_{{\geq}0}\mid [X,\gt_{-1}]{=}\{0\}\}=\{0\}.
\end{equation*}
\end{itemize}
In this case, we say  for short 
that $\gt$ is an EPFGLA of $\gt_{{<}0}.$ \par
Let us fix a Lie subalgebra $\Li$ of $\gl_{\K}(\gt_{-1}),$ that we will call
the \emph{structure algebra}.
An EPFGLA \eqref{eq-ta-1-6} is said to be \emph{of type $\Li$}, or
an $\Li$-pro\-long\-a\-tion,  
if
for each $A\in\gt_0$ the map
$\vq\to[A,\vq],$ for $\vq\in\gt_{-1},$ is an element of $\Li$. 
 \end{defn} 
\begin{lem}\label{lem-ta-1-5}
 If \eqref{eq-ta-1-6} is an EPFGLA, then $\dim(\gt_{\pq}){<}\infty$ for all $\pq\in\Z.$ 
 \end{lem}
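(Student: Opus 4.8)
The plan is to prove that each homogeneous component $\gt_{\pq}$ of an EPFGLA has finite dimension by induction, treating the negative and the nonnegative degrees separately. For $\pq<0$ this is essentially immediate from the definition: $\gt_{<0}$ is a FGLA, so $\gt_{-1}$ is finite dimensional by hypothesis, and each $\gt_{-\pq}$ is spanned by iterated brackets of elements of $\gt_{-1}$, hence is a quotient of a subspace of $\T^{\pq}(\gt_{-1})$, which is finite dimensional. (Alternatively, invoke the surjection $\piup\colon\ft(\gt_{-1})\twoheadrightarrow\gt_{<0}$ from Proposition~\ref{prop-tan-1.4} and the fact that each $\ft_{-\pq}(\gt_{-1})$ is finite dimensional.)

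The substance is the case $\pq\geq0$, which I would handle by induction on $\pq$ using effectiveness. Set $V=\gt_{-1}$, $n=\dim V<\infty$. The key observation is that the adjoint action gives, for each $\pq\geq0$, an injective linear map
\begin{equation*}
 \gt_{\pq}\hookrightarrow\Hom(V,\gt_{\pq-1}),\qquad X\mapsto\bigl(\vq\mapsto[X,\vq]\bigr).
\end{equation*}
Injectivity is exactly where effectiveness enters: if $X\in\gt_{\pq}$ with $\pq\geq0$ satisfies $[X,V]=[X,\gt_{-1}]=\{0\}$, then $X\in\gt_{\geq0}$ kills $\gt_{-1}$, so $X=0$ by condition $(ii)$. (For $\pq=0$ the map $\gt_{0}\hookrightarrow\Hom(V,\gt_{-1})=\gl_{\K}(V)$ is just the structure-algebra embedding; its injectivity is again effectiveness.) Granting by the inductive hypothesis that $\gt_{\pq-1}$ is finite dimensional — the base case $\pq=0$ uses $\gt_{-1}$ finite dimensional — we conclude $\dim\gt_{\pq}\leq n\cdot\dim\gt_{\pq-1}<\infty$, completing the induction.

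The only point requiring a small argument is the well-definedness and injectivity of the map $X\mapsto\ad(X)|_{V}$, i.e. that an element of $\gt_{\pq}$ ($\pq\geq 0$) is determined by its bracket with $\gt_{-1}$; this is the main (and essentially the sole) obstacle, and it is resolved precisely by the effectiveness hypothesis $(ii)$ together with the grading, since $[\gt_{\pq},\gt_{-1}]\subseteq\gt_{\pq-1}$. No non-integrability or fundamentality of $\gt_{<0}$ beyond $\dim\gt_{-1}<\infty$ is needed for the nonnegative part, though fundamentality is implicitly what makes condition $(ii)$ the ``right'' notion of effectiveness. I would write the proof in two short paragraphs along these lines, the negative degrees in one sentence and the inductive estimate $\dim\gt_{\pq}\le n\dim\gt_{\pq-1}$ for $\pq\ge 0$ in the next.
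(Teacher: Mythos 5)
Your proof is correct and follows essentially the same route as the paper: the paper also uses effectiveness to embed $\gt_{\pq}$ (for $\pq\geq 0$) injectively into a finite-dimensional space of multilinear maps, namely $V\otimes\T^{\pq+1}(V^*)$ via the fully iterated bracket, which is just the composition of your one-step injections $\gt_{\pq}\hookrightarrow\Hom(V,\gt_{\pq-1})$. Your explicit treatment of the negative degrees (which the paper leaves implicit) and the inductive bound $\dim\gt_{\pq}\le n\dim\gt_{\pq-1}$ are fine.
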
 
\begin{proof}
Let $\gt={\sum}_{\pq\in\Z}\gt_{\pq}$ be an EPFGLA
and set $V=\gt_{-1}.$ For each $\xiup{\in}\gt,$ we define by recurrence 
\begin{equation*}\begin{cases}
 \xiup(\vq_0)=[\xiup,\vq_0], & \forall\vq_0\in{V},\\
 \xiup(\vq_0,\vq_1)=[[\xiup,\vq_0],\vq_1], &\forall\vq_0,\vq_1\in{V},\\
 \xiup(\vq_0,\hdots,\vq_\pq)=[\xiup(\vq_0,\hdots,\vq_{\pq{-}1}),\vq_{\pq}], &\forall\vq_0,\vq_1,\hdots,\vq_{\pq}\in{V}.
 \end{cases}
\end{equation*}
By the effectiveness assumption, for each
$\pq{\geq}0$ the map which associates to $\xiup{\in}\gt_{\pq}$ the $(\pq{+}1)$ multilinear map
$V^{\pq{+}1}\ni (\vq_0,\hdots,\vq_{\pq})\to\xiup(\vq_0,\hdots,\vq_{\pq})\in{V}$
is injective from $\gt_{\pq}$ to $V{\otimes}\T^{\pq{+}1}(V^*).$ 
\end{proof}
We already noted that, since  
$\K$ has characteristic zero, 
$\ft(V)$ can be canonically identified 
with a Lie subalgebra of the Lie algebra of  
the tensor algebra $\T(V)$; 
$\gl_{\K}(V)$ acts as an  algebra of zero degree derivations
on $\T(V),$  leaving $\ft(V)$ invariant. Denote by
$T_{\!{A}}$ the derivation of $\T(V)$ associated to
$A\in\gl_{\K}(V).$ With the Lie product defined by 
\begin{equation}
 [A,X]=T_{\!{A}}(X),\;\;\forall A\in\gl_{\K}(V),\;\forall X\in\ft(V),
\end{equation}
the direct sum 
\begin{equation}
 \tilde{\ft}(V)={\sum}_{\pq\geq{0}}\ft_{-\pq}(V),\;\;\text{with}\;\; \ft_0(V)=\gl_{\K}(V)
\end{equation}
is an EPFGLA 
of $\ft(V)$ and, for any Lie subalgebra $\Lt$ of
$\gl_{\K}(V),$ the sum $\Lt\oplus\ft(V)$ is a graded Lie subalgebra of $\tilde{\ft}(V).$ \par
By using \cite[Ch.2,\S{2},Prop.8]{n1998lie}, we obtain 
\begin{prop}\label{prop1.6}
 The maximal effective prolongation $\Ft(V)$ of $\ft(V)$ is 
\begin{equation}
 \Ft(V)={\sum}_{\pq=-\infty}^\infty \ft_{\pq}(V),
\;\;\text{where}\;\; 
 \ft_{\pq}(V)={V}\otimes\T^{\pq{+}1}(V^*), \;\;\text{for}\;\; \pq{\geq}0,
\end{equation}
is the space of $(\pq{+}1)$-linear $V$-valued maps on $V$ and
the Lie product is defined in such a way that, for $\xiup\in\ft_{p}(V),$ with $\pq{\geq}{0},$
we have 
\begin{equation}
\label{eq1.11}
[\xiup,\vq_0](\vq_1,\hdots,\vq_{\pq})=
 \xiup(\vq_0,\hdots,\vq_{\pq}),\;\;\forall\vq_0,\vq_1,\hdots,\vq_{\pq}\in{V}.
\end{equation}
\end{prop}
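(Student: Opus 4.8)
The plan is to recover $\Ft(V)$ as Tanaka's maximal effective prolongation of $\ft(V)$ and to identify its homogeneous components by exploiting the freeness of $\ft(V)$. Recall that this prolongation is built by recursion: in degree $\pq<0$ one keeps $\ft_{\pq}(V)$ equal to the degree $\pq$ component of the free Lie algebra, one sets $\ft_0(V)=\Der_0(\ft(V))$, the algebra of degree zero derivations of $\ft(V)$, and, for $\pq\geq{1}$, one lets $\ft_{\pq}(V)$ be the space of degree $\pq$ derivations of $\ft(V)$ with values in the graded $\ft(V)$-module $\ft_{{<}\pq}(V)={\sum}_{j<\pq}\ft_j(V)$; the bracket of such a derivation $\xiup$ with $\vq_0\in\ft_{-1}(V)=V$ is by construction the value $\xiup(\vq_0)\in\ft_{\pq-1}(V)$, so that iterating reproduces the multilinear maps $\xiup(\vq_0,\dots,\vq_{\pq})$ of the proof of Lemma~\ref{lem-ta-1-5}, i.e.\ the bracket~\eqref{eq1.11}. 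First I would record that this construction really yields an EPFGLA of $\ft(V)$: effectiveness is automatic, because a derivation of degree $\pq\geq{0}$ of $\ft(V)$ killing the generating subspace $V$ must vanish, and the graded $\ft(V)$-module structure on $\ft_{{<}\pq{+}1}(V)$ is obtained by extending the one on $\ft_{{<}\pq}(V)$ through the bracket just described.

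The key input is \cite[Ch.2,\S{2},Prop.8]{n1998lie}: since $\ft(V)$ is free on $V$, a graded derivation of $\ft(V)$ with values in a graded $\ft(V)$-module $M$ is uniquely determined by, and can be prescribed arbitrarily through, its restriction to $V$. Taking $M=\ft_{{<}\pq}(V)$, whose component of degree $\pq-1$ is $\ft_{\pq-1}(V)$, this gives for every $\pq\geq{0}$ a linear isomorphism
\[
 \ft_{\pq}(V)\;\cong\;\Hom_{\K}\bigl(V,\,\ft_{\pq-1}(V)\bigr),
\]
under which the bracket with an element of $\ft_{-1}(V)=V$ becomes evaluation of the corresponding homomorphism.

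Then I would proceed by induction on $\pq$. For $\pq=0$ this reads $\ft_0(V)\cong\Hom_{\K}(V,V)=\gl_{\K}(V)={V}\otimes\T^{1}(V^*)$. Assuming $\ft_{\pq-1}(V)\cong{V}\otimes\T^{\pq}(V^*)$, the isomorphism above yields
\[
 \ft_{\pq}(V)\;\cong\;\Hom_{\K}\bigl(V,\,{V}\otimes\T^{\pq}(V^*)\bigr)\;\cong\;{V}\otimes{V^*}\otimes\T^{\pq}(V^*)\;=\;{V}\otimes\T^{\pq{+}1}(V^*),
\]
and, chasing the identifications, evaluation at $V$ turns into $\xiup\mapsto[\xiup,\vq_0]$ with $[\xiup,\vq_0](\vq_1,\dots,\vq_{\pq})=\xiup(\vq_0,\dots,\vq_{\pq})$, which is~\eqref{eq1.11}; thus an element of $\ft_{\pq}(V)$ is genuinely the $(\pq{+}1)$-linear $V$-valued map on $V$ given by iterated bracketing. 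Maximality is built into Tanaka's construction; alternatively, for any EPFGLA $\gt$ of $\ft(V)$ the iterated-bracket map $\gt_{\pq}\to{V}\otimes\T^{\pq{+}1}(V^*)$ of the proof of Lemma~\ref{lem-ta-1-5} is injective by effectiveness and, by the same computation proving \eqref{eq1.11}, intertwines the brackets with $\gt_{-1}$, hence exhibits $\gt$ as a graded Lie subalgebra of $\Ft(V)$.

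The main obstacle, as I see it, is not a single deep point but the coherence of the recursion: one has to check that the graded $\ft(V)$-module structures pass consistently from $\ft_{{<}\pq}(V)$ to $\ft_{{<}\pq{+}1}(V)$ — equivalently, that the Jacobi identity holds for the bracket being assembled — and that under the chain of canonical isomorphisms $\Hom_{\K}(V,{V}\otimes\T^{\pq}(V^*))\cong{V}\otimes\T^{\pq{+}1}(V^*)$ the abstract bracket with $V$ is transported to exactly \eqref{eq1.11}, with no permuted arguments or stray signs. Once \cite[Ch.2,\S{2},Prop.8]{n1998lie} is invoked, the rest is routine, if somewhat lengthy.
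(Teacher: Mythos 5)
Your proposal is correct and follows essentially the same route as the paper: both rest on the freeness of $\ft(V)$ (via \cite[Ch.2,\S{2},Prop.8]{n1998lie}, which the paper re-derives through the semidirect product $\Mi\oplus\ft(V)$) to identify degree-$\pq$ derivations with $\Hom_{\K}(V,\ft_{\pq-1}(V))$, and then recursively obtain $\ft_{\pq}(V)\simeq V\otimes\T^{\pq+1}(V^*)$ together with \eqref{eq1.11}. The coherence issue you flag as the main obstacle is exactly what the paper settles last, by defining $[\xiup_1,\xiup_2]$ through $[[\xiup_{1},\xiup_{2}],\vq]=[[\xiup_{1},\vq],\xiup_{2}]+[\xiup_{1},[\xiup_{2},\vq]]$ and proving the Jacobi identity by recurrence on the total degree, using that an element of nonnegative degree vanishes iff all its brackets with $V$ vanish.
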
 
\begin{proof} 
If $\Mi$ is a right $\ft(V)$-module, then we can define on $\Mi{\oplus}\ft(V)$ a Lie $\K$-algebra
structure by setting
\begin{equation}\label{eq1star}\tag{$*$}
 [(\mu,X),(\nu,Y)]=(\mu{\cdot}Y-\nu{\cdot}X,[X,Y]),\;\;\forall \mu,\nu\in\Mi,\;\;\forall{X},Y\in\ft(V).
\end{equation}
Let  $\xiup{:}V{\to}\Mi$ be any $\K$-linear map. By the universal property the $\K$-linear map 
$V{\ni}\vq{\to}(\xiup(\vq),\vq){\in}\Mi{\oplus}\ft(V)$
extends to a Lie $\K$-algebras homomorphism 
\begin{equation*}
\ft(V)\ni X\to (D_{\xiup}(X),X)\in\Mi{\oplus}\ft(V)\end{equation*} 
between $\ft(V)$ and $\Mi{\oplus}\ft(V).$ Because of \eqref{eq1star},
we have $D_{\xiup}{\in}\Der(\ft(V),\Mi).$ 
\par
For $\pq{\in}\Z,$  we define recursively
finite dimensional $\K$-vector spaces $\Der_{\pq}(V)$
and right $\Z$-graded $\ft(V)$-modules $\Der_{(\pq)}(V)={\sum}_{\qq{\leq}\pq}\Der_{\qq}(V)$ 
 by setting 
\begin{equation*}
\begin{cases}
 \Der_{\pq}(V)=\ft_{\pq}(V),\; \Der_{(\pq)}(V)={\sum}_{\qq{\leq}\pq}\ft_{\qq}(V), &\text{for $\pq{<}0,$}\\[6pt]
 \left.
 \begin{aligned}
 &
 \Der_{\pq}(V)=\{D{\in}\Der(\ft(V),\Der_{(\pq{-}1)}(V)){\mid} \, D(\vq){\in}\Der_{\pq{-}1}(V),\;\forall\vq{\in}V\},
\\
&
 \Der_{(\pq)}(V)= \Der_{(\pq{-}1)}(V)\oplus\Der_{\pq}(V),
 \end{aligned}\right\}
  &\text{for $\pq{\geq}0,$}
\end{cases}
\end{equation*}
By the general argument at the beginning,    
$\Der_{\pq}(V){\simeq}V{\otimes}T^{{\pq}{+}1}(V^*)$ for $\pq{\geq}{0}$ 
and  \eqref{eq1.11} holds.\par
The last step is setting $\Der_{\pq}(V){=}\ft_{\pq}(V)$ and extending the Lie $\K$-algebra structure
of $\ft(V)$ to $\Ft(V).$ In doing that, 
we can restrain to homogeneous terms and argue by recurrence on the sums
of their degrees. Let $\xiup_{i}$ denote an element of $\ft_{\pq_i}(V)$ and consider a product
$[\xiup_{1},\xiup_{2}].$ It is well defined when $\min\{\pq_1,\pq_2\}{<}0.$ If we assume it
has been already defined for $\pq_1{+}\pq_2{\leq}\kq$ for some $\kq{\geq}0,$ we note that 
$[\xiup_{i},\vq]{\in}\ft_{\pq_i{-}1}$ and hence 
\begin{equation}\label{eq1sstar}\tag{$**$}
[ [\xiup_{1},\xiup_{2}],\vq]=[ [\xiup_{1},\vq],\xiup_{2}]
+[ \xiup_{1},[\xiup_{2},\vq]]
\end{equation}
is a well defined $\K$-linear map between $\vq{\in}V$ and $\Der_{\pq_1{+}\pq_2{-}1}(V),$ 
defining an element $[\xiup_{\pq_1},\xiup_{\pq_2}]$ of $\Der_{\pq_1{+}\pq_2}(V).$ 
This yields a product on $\Ft(V).$ To show that it satisfies 
the Jacobi identity 
\begin{equation}\label{eq1ssstar} \tag{$***$}
 [[\xiup_{1},\xiup_{2}],\xiup_{3}]+ [[\xiup_{2},\xiup_{3}],\xiup_{1}]
 + [[\xiup_{3},\xiup_{1}],\xiup_{2}]=0,
\end{equation}
we note that \eqref{eq1ssstar}
follows from \eqref{eq1sstar} when $\min\{\pq_1,\pq_2,\pq_3\}{<}0$ and can be proved by
recurrence on $\kq{=}\pq_1{+}\pq_2{+}\pq_3$ when $\min\{\pq_1,\pq_2,\pq_3\}{\geq}0,$ 
because in this case 
an element $\xiup$ of $\ft_{\kq}(V)$ is zero iff $[\xiup,\vq]{=}0$ for all
$\vq{\in}V.$ 
\end{proof}
\par\medskip

\begin{prop}
 Let $\Li$ be a Lie subalgebra of $\gl_{\K}(V).$ Then 
\begin{equation}
 \label{eq-ta-1-12b} \left\{\begin{aligned}
 \Ft(V,\Li)&=\ft(V)\oplus\Li\oplus{\sum}_{\pq{\geq}1}\ft_{\pq}(V,\Li),\;\;\text{with}\;\; \\
& \ft_{\pq}(V,\Li)=
 \{\xiup\in\ft_{\pq}(V)\mid \xiup(\vq_1,\hdots,\vq_{\pq})\in\Li,\;\forall \vq_1,\hdots,\vq_{\pq}
 \in{V}\}\end{aligned}\right.
\end{equation}
is the maximal EPFGLA prolongation of type $\Li$ of
$\ft(V).$  \qed
\end{prop}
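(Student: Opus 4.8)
The plan is to show that $\Ft(V,\Li)$ as defined in \eqref{eq-ta-1-12b} is a graded Lie subalgebra of the maximal effective prolongation $\Ft(V)$ (Proposition~\ref{prop1.6}), that it is an EPFGLA of type $\Li$, and finally that it is maximal among such. First I would verify that $\Ft(V,\Li)$ is closed under the Lie bracket of $\Ft(V)$. In non-positive degrees this is clear: $\ft(V)\oplus\Li$ is already a subalgebra of $\ft(V)\oplus\gl_{\K}(V)\subseteq\Ft(V)$. For the mixed and positive-degree brackets, the key computational identity is \eqref{eq1sstar}: if $\xiup_i\in\ft_{\pq_i}(V,\Li)$ then, arguing by recurrence on $\pq_1+\pq_2$ exactly as in the proof of Proposition~\ref{prop1.6}, the map $\vq\mapsto[[\xiup_1,\xiup_2],\vq]=[[\xiup_1,\vq],\xiup_2]+[\xiup_1,[\xiup_2,\vq]]$ sends $V$ into $\ft_{\pq_1+\pq_2-1}(V,\Li)$, since each summand on the right does by the inductive hypothesis (when $\pq_2=0$, i.e. $\xiup_2\in\Li$, the first term is $-T_{\xiup_2}$ applied coefficientwise and $\Li$ is closed under $[\,\cdot\,,\xiup_2]$; the general case reduces to this by iterated evaluation). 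Hence $[\xiup_1,\xiup_2]\in\ft_{\pq_1+\pq_2}(V,\Li)$, and the bracket restricts. Also $\ft_{\pq}(V,\Li)$ is finite dimensional, being a subspace of $V\otimes\T^{\pq+1}(V^*)$.

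Next I would check the two defining properties of an EPFGLA of type $\Li$. Property $(i)$: the negative part of $\Ft(V,\Li)$ is $\ft(V)$, which is an FGLA. Property $(ii)$, effectiveness: if $\xiup\in\Ft(V,\Li)_{\geq 0}$ satisfies $[\xiup,\gt_{-1}]=0$, write $\xiup=\sum\xiup_{\pq}$ in homogeneous components; for the top component $\xiup_{\pq}\in\ft_{\pq}(V,\Li)$ with $\pq\geq 0$, the identity \eqref{eq1.11} gives $[\xiup_\pq,\vq_0](\vq_1,\dots,\vq_\pq)=\xiup_\pq(\vq_0,\dots,\vq_\pq)$, so $[\xiup_\pq,\gt_{-1}]=0$ forces $\xiup_\pq=0$, and descending we get $\xiup=0$ (this is just the injectivity observed in the proof of Lemma~\ref{lem-ta-1-5}). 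Finally, by definition the degree-zero part is $\Li$, so the map $\vq\mapsto[A,\vq]$ lies in $\Li$ for every $A\in\gt_0$; thus $\Ft(V,\Li)$ is an $\Li$-prolongation of $\ft(V)$.

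It remains to prove maximality. Let $\gt=\sum_{\pq\in\Z}\gt_\pq$ be any EPFGLA of type $\Li$ with $\gt_{<0}=\ft(V)$. By effectiveness and Proposition~\ref{prop1.6}, $\gt$ embeds as a graded Lie subalgebra of $\Ft(V)$ via $\xiup\mapsto(\vq_0,\dots,\vq_\pq)\mapsto\xiup(\vq_0,\dots,\vq_\pq)$, compatibly with the identification of $\gt_{<0}$ with $\ft(V)$ and (since $\gt$ is of type $\Li$) sending $\gt_0$ into $\Li\subseteq\ft_0(V)$. I claim the image of $\gt_\pq$ lies in $\ft_\pq(V,\Li)$ for all $\pq\geq 0$; I would prove this by induction on $\pq$, the cases $\pq=0$ being the type condition. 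If $\xiup\in\gt_\pq$ with $\pq\geq 1$, then for each $\vq\in V$ the bracket $[\xiup,\vq]$ lies in $\gt_{\pq-1}$, hence in $\ft_{\pq-1}(V,\Li)$ by the inductive hypothesis; but $\xiup(\vq_0,\vq_1,\dots,\vq_\pq)=[\xiup,\vq_0](\vq_1,\dots,\vq_\pq)$ by \eqref{eq1.11}, so $\xiup(\vq_0,\dots,\vq_\pq)\in\Li$ for all $\vq_i$, i.e. $\xiup\in\ft_\pq(V,\Li)$. Therefore $\gt\subseteq\Ft(V,\Li)$, which establishes maximality. The main obstacle is the bracket-closure step in positive degrees: one must be careful that the recursion of Proposition~\ref{prop1.6} interacts correctly with the pointwise constraint "values in $\Li$," particularly in the boundary case where one factor has degree $0$ and the bracket acts as a derivation, so that the stability of $\Li$ under its own adjoint action is what makes \eqref{eq1sstar} close up; everything else is a routine transcription of the arguments already given for $\Ft(V)$.
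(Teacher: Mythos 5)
Your proof is correct and follows exactly the route the paper intends: the paper states this proposition with a bare \qed, treating it as immediate from the construction of $\Ft(V)$ in Proposition~\ref{prop1.6}, and your argument is precisely the filling-in of those details (closure under the bracket via the identity \eqref{eq1sstar} and induction on total degree, effectiveness inherited from $\Ft(V)$, and maximality by embedding any $\Li$-prolongation into $\Ft(V)$ and inducting with \eqref{eq1.11}). No gaps; the one point worth stating a bit more carefully is the bracket of a positive-degree element with a general $X\in\ft_{-\qq}(V)$, $\qq\geq 2$, which reduces to the case $X\in V$ since $V$ generates $\ft(V)$ and the Jacobi identity applies.
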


\par\medskip

Fix a $\Z$-graded ideal $\Ki$ of $\ft(V),$ 
contained in $\ft_{[2]}(V),$ and denote 
by 
\begin{equation} \label{eq-1.13}
\mt(\Ki){=}{\sum}_{\pq{\geq}1}\gt_{-\pq}(\Ki)
\end{equation}
the FGLA defined by the quotient
$\ft(V){/}\Ki.$ 
Let $\Li$ be 
a Lie subalgebra  of $\gl_{\K}(V).$ 
Since
$\Ki$ is a Lie subalgebra of $\Ft(V,\Li),$ 
we can 
associate 
to the pair $(\Ki,\Li)$ 
the normalizer 
\begin{equation}
\Nt(\Ki,\Li)=\{\xiup\in\Ft(V,\Li)\mid [\xiup,\Ki]\subseteq\Ki\}
\end{equation}
of $\Ki$ in 
$\Ft(V,\Li).$ It is 
the largest Lie subalgebra of $\Ft(V,\Li)$ containing 
$\Ki$ as an ideal and is $\Z$-graded.
The quotient 
\begin{equation} \label{eq-1.14}
 \gt(\Ki,\Li)=\Nt(\Ki,\Li)/\Ki
\end{equation}
has a natural $\Z$-grading for which  $\gt_{{<}0}(\Ki,\Li){=}\mt(\Ki)$ and 
the natural projection 
$\Nt(\Ki,\Li){\rightarrow\!\!\!\!\rightarrow} 
\gt(\Ki,\Li)$ is a $\Z$-graded epimorphism of Lie algebras.
\begin{thm}\label{thm-tan-1-8}
 The commutative diagram 
 \begin{equation*}
 \xymatrix@1{ 0 \ar[r] &\ft(V)\ar[r]\ar@{->>}[d] & \Nt(\Ki,\Li)\ar@{->>}[d] \\
 0\ar[r] & \mt(\Ki) \ar[r] & \gt(\Ki,\Li)
 }
\end{equation*}
defines on $\gt(\Ki,\Li)$ the structure of an EPFGLA of type $\Li$ of $\mt(\Ki)$
and $\gt(\Ki,\Li)$ is, modulo isomorphisms, its maximal 
EPFGLA 
of type $\Li.$ \qed
\end{thm}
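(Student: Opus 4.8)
The statement packages together three assertions: (1) the diagram commutes and the vertical arrows are surjective; (2) $\gt(\Ki,\Li)$ carries the structure of an EPFGLA of type $\Li$ of $\mt(\Ki)$; (3) it is maximal among such prolongations. I would organize the proof around the single construction that underlies all three: the normalizer $\Nt(\Ki,\Li)$ inside the maximal $\Li$-prolongation $\Ft(V,\Li)$ of the free Lie algebra, whose properties have essentially all been set up already in the excerpt.

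\textbf{Step 1: the diagram and its exactness.} First I would observe that $\ft(V)\subseteq\Nt(\Ki,\Li)$ because $\Ki$ is a graded ideal of $\ft(V)$, so $[\ft(V),\Ki]\subseteq\Ki$; the top arrow is the inclusion and the bottom arrow is the inclusion $\mt(\Ki)=\ft(V)/\Ki\hookrightarrow\Nt(\Ki,\Li)/\Ki=\gt(\Ki,\Li)$, which is injective precisely because $\ft(V)\cap\Ki=\Ki$ (i.e. $\Ki\subseteq\ft(V)$). The left vertical arrow is the quotient projection $\piup$, the right vertical arrow is the quotient map $\Nt(\Ki,\Li)\to\Nt(\Ki,\Li)/\Ki$; commutativity is then immediate from the definitions, and surjectivity of both verticals is clear. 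This also makes explicit that $\gt_{{<}0}(\Ki,\Li)=\mt(\Ki)$, since $\Ki\subseteq\ft_{[2]}(V)$ forces $\Nt(\Ki,\Li)_{\pq}=\ft_{\pq}(V)$ for $\pq<0$ and hence $\gt_{\pq}(\Ki,\Li)=\ft_{\pq}(V)/\Ki_{-\pq}=\gt_{\pq}(\Ki)$.

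\textbf{Step 2: $\gt(\Ki,\Li)$ is an EPFGLA of type $\Li$.} By Step 1, $\gt_{{<}0}(\Ki,\Li)=\mt(\Ki)$ is an FGLA, so condition (i) holds. For condition (ii), effectiveness: suppose $\bar\xiup\in\gt_{{\geq}0}(\Ki,\Li)$ satisfies $[\bar\xiup,\gt_{-1}(\Ki,\Li)]=\{0\}$. Lift $\bar\xiup$ to $\xiup\in\Nt(\Ki,\Li)_{\pq}\subseteq\ft_{\pq}(V,\Li)$ with $\pq\geq0$; then $[\xiup,\vq]\in\Ki$ for all $\vq\in V$. But $\gt_{-1}(\Ki)=V$ (no relations in degree $-1$), and one argues by downward induction, using the bracket formula \eqref{eq1.11} in $\Ft(V)$ and the fact that $\Ki$ is an ideal, that $\xiup\in\Ki$ — this is the key point, and the cleanest route is to note that for a homogeneous $\xiup\in\ft_\pq(V,\Li)$ with $\pq\geq 0$ the condition $[\xiup,\vq]\in\Ki$ for all $\vq$ together with $\xiup$ being determined by $\vq\mapsto[\xiup,\vq]$ (effectiveness of $\Ft(V,\Li)$ itself) forces $\xiup$ to lie in the graded ideal generated by $\Ki$ in $\Ft(V,\Li)$, intersected with $\ft_\pq(V,\Li)$; I would spell this out by induction on $\pq$, the base case $\pq=0$ being that $[\xiup,\vq]\in\Ki\cap V=\{0\}$ hence $\xiup=0$. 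Finally, that $\gt(\Ki,\Li)$ is of type $\Li$: for $\bar A\in\gt_0(\Ki,\Li)$, lift to $A\in\Nt(\Ki,\Li)_0\subseteq\ft_0(V,\Li)=\Li$, and the map $\vq\mapsto[A,\vq]$ on $\gt_{-1}(\Ki,\Li)=V$ is, via \eqref{eq1.11}, exactly the action of $A\in\Li$, which lies in $\Li$ by construction.

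\textbf{Step 3: maximality.} Let $\gt'={\sum}_{\pq}\gt'_{\pq}$ be any EPFGLA of type $\Li$ of $\mt(\Ki)$, so $\gt'_{{<}0}=\mt(\Ki)$ and $\gt'_{{\geq}0}$ acts effectively with $\gt'_0\hookrightarrow\Li$. I would produce a graded Lie algebra embedding $\gt'\hookrightarrow\gt(\Ki,\Li)$ as follows. Using effectiveness of $\gt'$ and the injection $\gt'_{\pq}\hookrightarrow V\otimes\T^{\pq+1}(V^*)$ from Lemma~\ref{lem-ta-1-5} (with the multilinear-map description), each $\gt'_{\pq}$ for $\pq\geq 0$ maps into $\ft_{\pq}(V)$; the type-$\Li$ condition places the image of $\gt'_1$ inside $\ft_1(V,\Li)$, and then an induction using \eqref{eq1sstar} places all of $\gt'_{{\geq}0}$ inside $\Ft(V,\Li)$; together with the identity on $\gt'_{{<}0}=\mt(\Ki)$ (lifted to $\ft(V)$), one checks the image normalizes $\Ki$ — because the relations of $\mt(\Ki)$, i.e. the elements of $\Ki$, act as zero on $\gt'$, so bracketing with them lands in $\Ki$ — hence the image lies in $\Nt(\Ki,\Li)$ and descends to an injective graded Lie homomorphism $\gt'\to\gt(\Ki,\Li)$ which is the identity in negative and zero degrees appropriately. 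Uniqueness of this embedding follows from effectiveness: a graded Lie map is determined by its values in degrees $\leq 0$.

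\textbf{Main obstacle.} The routine parts are the diagram chase and the type-$\Li$ verification; the real work is Step 2's effectiveness claim — showing that a degree-$\pq$ element $\xiup$ of $\ft_\pq(V,\Li)$ with $[\xiup,\vq]\in\Ki$ for all $\vq\in V$ actually lies in $\Ki$ — and the compatibility in Step 3 that the image of an abstract prolongation $\gt'$ genuinely normalizes $\Ki$ rather than merely sitting in $\Ft(V,\Li)$. Both hinge on the same mechanism: an element of $\Ft(V,\Li)$ in non-negative degree is annihilated iff all its iterated brackets with $V$ vanish, so "modulo $\Ki$" statements can be checked one degree at a time by bracketing down to degree $-1$, where $\Ki$ is trivial. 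I expect that carefully setting up this descending induction — keeping track of the ideal generated by $\Ki$ in $\Ft(V,\Li)$ versus $\Ki$ itself — is where the argument must be written with care.
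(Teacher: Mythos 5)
Your proof is correct and follows exactly the route the paper intends: the theorem is stated there with no written proof, as a direct consequence of the construction of $\Nt(\Ki,\Li)$ as the largest graded subalgebra of $\Ft(V,\Li)$ containing $\Ki$ as an ideal, and your Steps 1--3 supply precisely the missing verifications (the diagram chase, effectiveness and the type-$\Li$ condition for the quotient, and the embedding of an arbitrary $\Li$-prolongation of $\mt(\Ki)$ into $\Nt(\Ki,\Li)/\Ki$ via the multilinear-map realisation and the compatibility of brackets checked degree by degree). One small simplification: in Step 2 no induction is actually needed, since $\Ki\subseteq\ft_{[2]}(V)$ has no homogeneous components in degrees $\geq -1$, so for $\xiup$ of degree $\pq\geq 0$ the condition $[\xiup,\vq]\in\Ki$ already forces $[\xiup,\vq]=0$ for all $\vq\in V$, and effectiveness of $\Ft(V,\Li)$ gives $\xiup=0$ in one stroke.
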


We have 
\begin{equation}\label{eq-ta-1-12a} \left\{ 
\begin{aligned}
 \gt(\Ki,\Li)&={\sum}_{\pq\in\Z}\gt_{\pq}(\Ki,\Li), \;\;\text{with
 $\gt_{\pq}(\Ki,\Li)=\gt_{\pq}(\Ki)$ for $\pq{<}0,$ and}\\
 \gt_{\pq}(\Ki,\Li)&=\{\xiup\in\ft_{\pq}(V,\Li))
 \mid [\xiup,\Ki]\subseteq\Ki\},\;\;\text{for $\pq{\geq}0.$}  \end{aligned} \right.
\end{equation}
\par\medskip
{\small
\begin{rmk}
This theorem was proved in \cite[\S{5}]{Tan70} 
under the additional assumption that $\Ki$ is cofinite.
The 
summands $\gt_{\pq}(\Ki,\Li)$ in \eqref{eq-ta-1-12a}
were 
 recursively defined there by setting \vspace{-15pt}
\begin{equation}
 \label{eq-ta-1-12} 
\begin{cases}
\gt_{\pq}(\Ki,\Lt)=\gt_{\pq}(\Ki), &\text{for $\pq{<}0,$}\\
\left.
\begin{aligned}
& \gt_{{<}\pq}(\Ki,\Lt)={\sum}_{\qq{<}\pq}\gt_{\qq}(\Ki,\Lt),  
 \\
 &\gt_{\pq}(\Ki,\Lt)=\Der_{\!\pq}(\mt,\gt_{{<}\pq}(\Ki,\Lt)), \end{aligned} \right\}&\text{for $\pq{\geq}0.$}
\end{cases}
\end{equation}
Each  $\gt_{{<}\pq}(\Ki,\Lt)$ is a right $\mt(\Ki)$-module and 
$\Der_{\!\pq}(\mt,\gt_{{<}\pq}(\Ki,\Lt))$ is the space of the 
$\gt_{{<}\pq}$-valued 
degree $\pq$-derivations of $\mt.$ 
One can check that this definition is equivalent to the one 
we gave above (cf. the proof of Proposition~\ref{prop1.6}).
\end{rmk}}

\section{Tanaka's finiteness criterion}\label{sect2}
\subsection{Left and right $V$-modules} \label{sec-t-1}
Let $V$ be an $n$-dimensional $\K${-}vector space, 
that we consider as an abelian Lie algebra. 
Its universal enveloping algebra is 
the  graded algebra
 \begin{equation}
\Az={\sum}_{\pq=0}^\infty\Symm_\pq(V)
\end{equation}
of symmetric elements of its tensor algebra. Since $\K$ has characteristic zero, 
$\Az$ is the ring of polynomials of $V$ with coefficients in $\K.$
\par 
A right action of $V$ on a $\K$-vector space $E$ is a bilinear map 
\begin{gather}\label{eqt1.1}
E\times{V}\ni (\eq,\vq)\longrightarrow \eq{\cdot}\vq\in{E},
\\
\notag
\text{with}\qquad
(\eq{\cdot}\vq_1){\cdot}\vq_2=(\eq{\cdot}\vq_2){\cdot}\vq_1,\;\;
\forall\eq\in{E},\;\;\forall \vq_1,\vq_2\in{V}.\qquad
\end{gather}
The map \eqref{eqt1.1} extends to a right action 
\begin{equation}
E\times\Az\ni (\eq,\aq)\longrightarrow\eq{\cdot}\aq\in{E}.
\end{equation}
If $E^*$ is a $\K$-vector space which is in duality with $E$
by a pairing \begin{equation}
E\times{E}^*\ni (\eq,\etaup)\longrightarrow\langle\eq\mid\etaup\rangle\in\K,
\end{equation} 
a \textit{left} action 
\begin{equation}\label{eq-ta-2.5a}
V\times{E}^*\ni(\vq,\etaup)\longrightarrow\vq{\cdot}\etaup\in{E}^*
\end{equation}
of $V$ on $E^*$ is \emph{dual}
of  \eqref{eqt1.1} if
\begin{equation}
\langle\eq\,|\,\vq{\cdot}\etaup\rangle=\langle\eq{\cdot}\vq\,
|\,\etaup
\rangle,\;\;\;\forall\vq{\in}V,\;\forall\eq{\in}E,\;\forall\etaup
{\in}E^*.
\end{equation}
Clearly a dual left action of $V$ extends to a left action
of $\Az.$ 

\begin{defn}
A \emph{$\Z$-gradation} of the right
$V$-module $E$ is a direct sum decomposition 
\begin{equation}\label{eq.t.1.2}
E={\sum}_{\pq=-\infty}^\infty E_\pq\;\;\; \text{with}\;\; 
E_\pq{\cdot}V\subseteq{E}_{\pq-1}\;\text{for}\; \pq\in\Z.
\end{equation}
\par 
We say that $E$ satisfies  
condition 
\eqref{cndC}
if \begin{equation}\label{cndC} \tag{$C$}
\begin{cases}
\dim(E_p)<+\infty,\;\forall p\in\Z,\quad \\
\exists \pq_0\in\Z\;\;{s.t.}\;\; E_{\pq}=\{0\} \;\;\text{for}\;\; 
\pq{<}\pq_0,\\
\eq\in{\sum}_{h\geq{0}}E_p \;\;\text{and}\;\; \eq{\cdot}V=\{0\} 
\Longrightarrow \eq=0.
\end{cases}
\end{equation} 
\end{defn}
\par\smallskip 
We call 
the $\Z$-graded vector space
\begin{equation}
E^*={\sum}_{h=-\infty}^\infty E^*_h, \;\;
\text{with $E^*_h=$ dual space of $E_{-h},$ for all $h\in\Z.$}
\end{equation}
the \emph{graded dual} of $E.$ 
When $E$ is a graded right $V$-module, its graded dual
$E^*$ is a 
graded left $V$-module under the action 
\eqref{eq-ta-2.5a} which is described on the homogeneous
elements by
\begin{equation}
\langle\eq\mid \vq{\cdot}\etaup\rangle=
\langle\eq{\cdot}\vq\mid\etaup\rangle
\;\;
\forall \eq\in{E}_{1-h},\;\forall\etaup\in{E}^*_h,\;\forall\vq\in{V}.
\end{equation}

\begin{lem} The following are equivalent \begin{itemize}
\item[(a)] $E$ satisfies condition \eqref{cndC};
\item[(b)] $E^*$ satisfies condition
\begin{equation}\tag{$C^*$} \label{cndstar}
\begin{cases}
\dim(E_\qq^*)<+\infty,\;\forall \qq\in\Z,\quad \\
\exists\, \qq_{\,0}\in\Z\;\;{s.t}\;\; E^*_{\qq}=0,\;\forall \qq>\qq_{\,0},\\
{\sum}_{\qq\geq{1}}E^*_{\qq}\;\;\text{generates}\; E^*\;
\text{as a left $\Az$-module.}
\end{cases}
\end{equation}
\end{itemize}
\end{lem}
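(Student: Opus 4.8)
The plan is to prove the equivalence of conditions $(C)$ for $E$ and $(C^*)$ for $E^*$ by translating each clause of $(C)$ dually, clause by clause. The three conditions in $(C)$ are: finite-dimensionality of each graded piece; boundedness of the grading from below; and the ``effectiveness'' condition that no nonzero element of $\sum_{h\geq 0}E_h$ is annihilated by $V$. The first two are easy and essentially formal; the third is the one requiring an argument.

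First I would dispose of the bookkeeping. Since $E^*_h$ is by definition the dual of $E_{-h}$, we have $\dim(E^*_h)=\dim(E_{-h})$, so $\dim(E_h)<\infty$ for all $h$ if and only if $\dim(E^*_\qq)<\infty$ for all $\qq$; this handles the first clause. For the second clause, ``$E_\pq=\{0\}$ for $\pq<\pq_0$'' is, under $E^*_\qq=(E_{-\qq})^*$, exactly ``$E^*_\qq=\{0\}$ for $-\qq<\pq_0$'', i.e. ``$E^*_\qq=\{0\}$ for $\qq>-\pq_0$'', so one may take $\qq_0=-\pq_0$; this handles the second clause in both directions. (Implicitly one uses here that each $E_\pq$ is finite-dimensional, so that double dualization behaves well, but this is exactly the content of the first clause, which we may as well assume in treating the others.)

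The substantive step is the third clause. I claim that, granting finite-dimensionality of all graded pieces and boundedness of the grading, the effectiveness condition on $E$ is equivalent to $E^*$ being generated over $\Az=\sum_{\pq\geq 0}\Symm_\pq(V)$ by $\sum_{\qq\geq 1}E^*_\qq$. Write $M=\sum_{\qq\geq 1}E^*_\qq$ for the left $\Az$-submodule generated by the strictly positive part, a graded submodule. The key observation is the perfect pairing between $E_{1-h}$ and $E^*_h$ together with the adjunction $\langle\eq\mid\vq\cdot\etaup\rangle=\langle\eq\cdot\vq\mid\etaup\rangle$: this identifies, for each $\pq\leq 0$, the annihilator in $E_{-\pq}$ of the subspace $(\Symm_{\geq 1}(V)\cdot M)_\pq\subseteq E^*_\pq$ — wait, more precisely, the orthogonal complement of $M_\pq$ inside $E_{-\pq}$. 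The point is that $M=E^*$ fails precisely when some $M_\pq\subsetneq E^*_\pq$ for some $\pq\leq 0$, and by nondegeneracy of the pairing this is equivalent to the existence of a nonzero $\eq\in E_{-\pq}$ (with $-\pq\geq 0$) orthogonal to $M_\pq$. Unwinding what it means to be orthogonal to the $\Az$-span of the positive part: $\eq\perp \Symm_k(V)\cdot E^*_{\qq}$ for all $k\geq 0,\ \qq\geq 1$ with $\qq-k=\pq$ means, via repeated use of the adjunction, that $\eq\cdot\vq_1\cdots\vq_k\perp E^*_\qq$, i.e. $\eq\cdot\Symm_k(V)$ lands in the orthogonal complement of $E^*_\qq$ in $E_{-\qq}=E_{1-(1-\qq)}$... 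Here is where I must be slightly careful: I would argue instead by downward induction, showing that $\eq\cdot V=0$ forces $\eq$ orthogonal to all of $M$, and conversely an $\eq\perp M$ of top degree among the offending degrees must satisfy $\eq\cdot V=0$.

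So concretely I would argue: $(a)\Rightarrow(b)$: if $E^*\neq M$, pick $\pq\leq 0$ maximal with $M_\pq\subsetneq E^*_\pq$ (possible since above degree $0$ everything is generated trivially from degree-$\geq 1$ part, and below some bound everything vanishes — wait, boundedness is from below for $E$, hence from above for $E^*$, so the set of such $\pq$ is finite). Let $0\neq\eq\in E_{-\pq}$ be orthogonal to $M_\pq$. For any $\vq\in V$, $\eq\cdot\vq\in E_{-\pq-1}=E_{-(\pq+1)}$ with $\pq+1\leq 0$; for $\etaup\in E^*_{\pq+1}$, if $\pq+1\leq 0$ then by maximality $E^*_{\pq+1}=M_{\pq+1}=\Symm_{\geq 1}(V)\cdot M+\dots$, and $\langle\eq\cdot\vq\mid\etaup\rangle=\langle\eq\mid\vq\cdot\etaup\rangle$ with $\vq\cdot\etaup\in M_\pq$, hence zero; and if $\pq+1\geq 1$ then $\pq=0$ and $\eq\cdot\vq\in E_{-1}$ and we test against $E^*_1\subseteq M$, again getting zero. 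So $\eq\cdot\vq=0$ for all $\vq$, and $\eq\in\sum_{h\geq 0}E_h$ since $-\pq\geq 0$, contradicting $(a)$. Conversely $(b)\Rightarrow(a)$: if $0\neq\eq\in\sum_{h\geq 0}E_h$ with $\eq\cdot V=0$, I may assume $\eq\in E_h$ homogeneous, $h\geq 0$; then $\eq\cdot\Az=\K\eq$ is killed by $\Symm_{\geq 1}(V)$, so $\eq$ is orthogonal to $\Symm_{\geq 1}(V)\cdot E^*=\Symm_{\geq 1}(V)\cdot M$, and if $h\geq 1$ also $\eq\perp E^*_{-h}$-contributions... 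Actually cleaner: $\eq$ pairs to zero with every element of $M_{-h}=(\sum_{\qq\geq 1}\Symm_{\qq-(-h)}(V)\cdot E^*_\qq)$ because each such element is in $\Symm_{\geq 1}(V)\cdot E^*$ (as $\qq\geq 1$ and $-h\leq 0$ force $\qq-(-h)=\qq+h\geq 1$), and $\langle\eq\mid\Symm_{\geq 1}(V)\cdot E^*\rangle=\langle\eq\cdot\Symm_{\geq 1}(V)\mid E^*\rangle=0$. Hence $M_{-h}\neq E^*_{-h}$ (as $E_h=(E^*_{-h})^*$ is nonzero and $\eq$ gives a nonzero functional vanishing on $M_{-h}$), so $M\neq E^*$, contradicting $(b)$.

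\medskip

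\begin{proof}
Assume throughout that each graded piece $E_\pq$ is finite dimensional and that the grading of $E$ is bounded from below; the remaining two clauses of \eqref{cndC} and \eqref{cndstar} match under these hypotheses, which are themselves dual to each other, as $\dim(E^*_\qq)=\dim(E_{-\qq})$ and ``$E_\pq=\{0\}$ for $\pq<\pq_0$'' reads ``$E^*_\qq=\{0\}$ for $\qq>-\pq_0$''. Hence $(a)\Leftrightarrow(b)$ reduces to the equivalence between the effectiveness clause for $E$ and the generation clause for $E^*$. Let $M={\sum}_{\qq\geq{1}}\Az{\cdot}E^*_\qq$ be the graded left $\Az$-submodule of $E^*$ generated by its strictly positive part; since $\Az={\sum}_{k\geq 0}\Symm_k(V)$, for $\pq\leq 0$ we have $M_\pq={\sum}_{\qq\geq 1}\Symm_{\qq-\pq}(V){\cdot}E^*_\qq$, and $M_\pq=E^*_\pq$ automatically for $\pq\geq 1$; thus $M=E^*$ if and only if $M_\pq=E^*_\pq$ for every $\pq\leq 0$. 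Note every element of $M_\pq$ (for $\pq\leq 0$) lies in $\Symm_{\geq 1}(V){\cdot}E^*$, since $\qq\geq 1$ and $\pq\leq 0$ force $\qq-\pq\geq 1$.

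Suppose $(a)$ holds, i.e. $E$ is effective, but $M\neq E^*$. Because the grading of $E^*$ is bounded from above and $M_\pq=E^*_\pq$ for $\pq\geq 1$, the set of $\pq\leq 0$ with $M_\pq\subsetneq E^*_\pq$ is finite and nonempty; pick such a $\pq$ which is maximal. By the perfect pairing between $E_{-\pq}$ and $E^*_\pq$ there is $0\neq\eq\in E_{-\pq}$ with $\langle\eq\mid M_\pq\rangle=0$. For $\vq\in V$ and $\etaup\in E^*_{\pq+1}$ we have $\langle\eq{\cdot}\vq\mid\etaup\rangle=\langle\eq\mid\vq{\cdot}\etaup\rangle$ with $\vq{\cdot}\etaup\in M_\pq$: indeed if $\pq+1\leq 0$ then $\etaup\in E^*_{\pq+1}=M_{\pq+1}$ by maximality, so $\vq{\cdot}\etaup\in\Az{\cdot}M_{\pq+1}\subseteq M_\pq$, while if $\pq+1\geq 1$ then $\pq=0$ and $\etaup\in E^*_1\subseteq M$, so $\vq{\cdot}\etaup\in M_0=M_\pq$. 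In either case $\langle\eq{\cdot}\vq\mid\etaup\rangle=0$ for all $\etaup\in E^*_{\pq+1}$, whence $\eq{\cdot}\vq=0$ by nondegeneracy. Thus $\eq{\cdot}V=\{0\}$ with $0\neq\eq\in E_{-\pq}$ and $-\pq\geq 0$, contradicting $(a)$.

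Conversely, suppose $(b)$ holds, i.e. $M=E^*$, and let $0\neq\eq\in{\sum}_{h\geq 0}E_h$ satisfy $\eq{\cdot}V=\{0\}$; we may take $\eq\in E_h$ homogeneous with $h\geq 0$. Then $\eq{\cdot}\Symm_{\geq 1}(V)=\{0\}$, so for every $\zeta\in\Symm_{\geq 1}(V){\cdot}E^*$ we get $\langle\eq\mid\zeta\rangle=\langle\eq{\cdot}(\cdot)\mid(\cdot)\rangle=0$ by adjunction. Since $-h\leq 0$, the entire graded piece $E^*_{-h}=M_{-h}$ is contained in $\Symm_{\geq 1}(V){\cdot}E^*$ by the observation above, hence $\langle\eq\mid E^*_{-h}\rangle=0$. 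But $E_h$ and $E^*_{-h}$ are in perfect duality and $\eq\neq 0$, a contradiction. Therefore $(b)\Rightarrow(a)$, completing the proof.
\end{proof}
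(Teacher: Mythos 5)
Your proof is correct and rests on the same mechanism as the paper's: the adjunction $\langle\eq{\cdot}\vq\mid\etaup\rangle=\langle\eq\mid\vq{\cdot}\etaup\rangle$ plus finite-dimensional duality, which the paper packages as ``the map $E_{\pq}\ni\eq\to(\eq{\cdot}\vq_1,\hdots,\eq{\cdot}\vq_n)$ is injective iff its dual $(\etaup_1,\hdots,\etaup_n)\to\sum\vq_i{\cdot}\etaup_i$ is surjective,'' degree by degree. Your version merely makes explicit the descending induction (via a maximal offending degree) that turns degreewise surjectivity into generation by the positive part --- a step the paper leaves implicit; the only slip is the claim that the set of offending degrees is \emph{finite} (it need not be, since the grading of $E$ may be unbounded above), but all your argument uses is that this set is nonempty and bounded above by $0$, so a maximal element exists.
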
 
\begin{proof}
We have $E_{\pq}=\{0\}$ iff $E^*_{-\pq}=\{0\}.$  \par 
Given any basis 
$\vq_1,\hdots,\vq_n$ of $V,$ the
maps \begin{gather*}
 E_{\pq}\ni\eq\to(\eq{\cdot}\vq_1,\hdots,\eq{\cdot}\vq_n)\in({E}_{\pq-1})^n,
\\  
({E}^*_{1-\pq})^n\ni(\etaup_1,\hdots,\etaup_n)
\to \vq_1{\cdot}\etaup_1{+}\cdots{+}\vq_n{\cdot}\etaup_n\in{E}^*_{-\pq}
\end{gather*}
are  dual of each other and all vector spaces involved are
finite dimensional. 
Hence
the first one is injective if and only if the second one is surjective. 
These remarks yield the statement.
\end{proof}
Let $(V)$ denote the maximal ideal of $\Az,$ 
consisting of polynomials
vanishing at $0.$ For the notions of commutative algebra that
will be used below 
we refer to \cite[Ch.IV]{Bour89}
\begin{prop} \label{propt.1.2}
For an $E$ satisfying condition \eqref{cndC} the following are equivalent: 
\begin{itemize}
\item[$(i)$] $E$ is finite dimensional;
\item[$(ii)$] $E^*$ is finite dimensional;
\item[$(iii)$] $E^*$ is $(V)$-coprimary, i.e. all $\vq{\in}V$ are nilpotent
on $E^*.$
\end{itemize}
\end{prop}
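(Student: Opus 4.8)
The plan is to establish the cycle $(i)\Leftrightarrow(ii)$, $(ii)\Rightarrow(iii)$ and $(iii)\Rightarrow(ii)$, viewing $E^*$ throughout as a graded module over the polynomial ring $\Az={\sum}_{\pq\geq0}\Symm_\pq(V)$ and using the preceding Lemma to trade condition \eqref{cndC} on $E$ for condition \eqref{cndstar} on $E^*$. The equivalence $(i)\Leftrightarrow(ii)$ is mere bookkeeping of dimensions: since $E$ satisfies \eqref{cndC} every $E_\pq$ is finite dimensional, and $E^*_h=(E_{-h})^*$ gives $\dim_\K E^*={\sum}_h\dim_\K(E_{-h})={\sum}_\pq\dim_\K(E_\pq)=\dim_\K E$, so one side is finite precisely when the other is.

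For $(ii)\Rightarrow(iii)$ I would use that the left action of $V$ on $E^*$ decreases the degree by one: this is forced by the duality relation $\langle\eq\mid\vq{\cdot}\etaup\rangle=\langle\eq{\cdot}\vq\mid\etaup\rangle$, which gives $\vq{\cdot}E^*_h\subseteq E^*_{h-1}$ for all $\vq\in V$. If $E^*$ is finite dimensional it is concentrated in a bounded range of degrees, so $\vq^N{\cdot}E^*=\{0\}$ as soon as $N$ exceeds the width of that range; hence every $\vq\in V$ is nilpotent on $E^*$, which is $(iii)$.

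The substance of the statement is $(iii)\Rightarrow(ii)$. By the preceding Lemma $E^*$ satisfies \eqref{cndstar}; in particular ${\sum}_{\qq\geq1}E^*_\qq$ is a finite sum of finite dimensional spaces (since $E^*_\qq=0$ for $\qq>\qq_0$), hence a finite dimensional subspace that generates $E^*$ over $\Az$, so $E^*$ is a finitely generated graded $\Az$-module. Fix a basis $\vq_1,\hdots,\vq_n$ of $V$. Applying $(iii)$ to the finitely many module generators of $E^*$ and using that $\Az$ is commutative, one gets a single integer $N$ with $\vq_i^N{\cdot}E^*=\{0\}$ for every $i$. Then any monomial in $\vq_1,\hdots,\vq_n$ of degree $\geq n(N-1)+1$ contains some $\vq_i$ to a power $\geq N$ and therefore annihilates $E^*$, so $(V)^{n(N-1)+1}\subseteq\mathrm{Ann}_{\Az}(E^*)$; equivalently $\sqrt{\mathrm{Ann}_{\Az}(E^*)}=(V)$, because the annihilator is a proper ideal ($E^*\neq0$) and $(V)$ is maximal. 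Consequently $E^*$ is a finitely generated module over the finite dimensional $\K$-algebra $\Az/(V)^{n(N-1)+1}$, hence finite dimensional over $\K$ — that is, $(ii)$ — and equivalently of finite length, supported only at the origin.

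The delicate point, and the main obstacle, is precisely this last reduction: extracting the finite generation of $E^*$ from \eqref{cndstar} and upgrading the (a priori element-wise) nilpotency in $(iii)$ to a uniform exponent $N$. Once this is in place, the conclusion is the standard fact that a finitely generated module annihilated by a power of a maximal ideal of a Noetherian ring has finite length; the two "easy" implications $(i)\Leftrightarrow(ii)$ and $(ii)\Rightarrow(iii)$ are then only grading and duality bookkeeping.
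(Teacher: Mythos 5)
Your proof is correct. The cycle $(i)\Leftrightarrow(ii)$ and $(ii)\Rightarrow(iii)$ is handled exactly as in the paper: graded duality gives $\dim E=\dim E^*$, and a finite-dimensional graded module on which every $\vq$ lowers the degree is killed by a high power of $\vq$. The only place where you diverge is the key implication $(iii)\Rightarrow(ii)$. The paper disposes of it in one sentence by invoking the theory of associated primes: $E^*$ is finitely generated by condition \eqref{cndstar}, and a finitely generated module over the Noetherian ring $\Az$ whose set of associated primes is $\{(V)\}$ is supported only at the maximal ideal, hence has finite length, so its graded pieces vanish below some degree. You instead give a hands-on argument: finite generation (from \eqref{cndstar}, since $E^*_\qq=0$ for $\qq>\qq_0$ makes ${\sum}_{\qq\geq 1}E^*_\qq$ finite dimensional) plus commutativity upgrades the element-wise nilpotency of the basis vectors $\vq_1,\hdots,\vq_n$ to a uniform exponent $N$, a pigeonhole on monomial degrees then shows $(V)^{n(N-1)+1}\subseteq\mathrm{Ann}_{\Az}(E^*)$, and $E^*$ becomes a finitely generated module over the Artinian algebra $\Az/(V)^{n(N-1)+1}$, hence finite dimensional. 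The two arguments prove the same underlying fact (finitely generated $+$ $(V)$-coprimary $\Rightarrow$ finite length); yours is more elementary and self-contained, avoiding the Bourbaki machinery of $\mathpzc{Ass}$ at the cost of a slightly longer write-up, while the paper's is shorter but leans on a citation. You correctly identified that the uniform-exponent step is where the real content lies.
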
 \begin{proof}
Clearly $(i)\Leftrightarrow(ii).$ Also $(ii)\Rightarrow(iii)$ 
is clear. Indeed, if
$E^*$ is finite dimensional, all $\vq\in{V},$ lowering the degree by
one unit, are nilpotent. Vice versa, since $E^*$ is finitely generated, 
if the set $\mathpzc{Ass}(E^*)$ of its associated primes is 
$\{(V)\},$
then all $E^*_\qq$ are zero for
$\qq{<}\qq_{\,0}$ for some $\qq_{\,0}\in\Z.$
\end{proof}
\begin{cor}
A necessary and sufficient condition for a right $V$-module $E$
satisfying condition \eqref{cndC} to be infinite dimensional is that
there is $\vq\in{V}$ such that $E{\cdot}\vq{=}E.$ 
\end{cor}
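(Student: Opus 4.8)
The plan is to reduce the statement to a question about the graded dual $E^*$, where Proposition~\ref{propt.1.2} and the commutative algebra of the polynomial ring $\Az$ become available. For a fixed $\vq\in V$, right multiplication $R_{\vq}\colon E\to E$, $\eq\mapsto\eq{\cdot}\vq$, is a homogeneous map of degree $-1$, and the defining relation $\langle\eq{\cdot}\vq\mid\etaup\rangle=\langle\eq\mid\vq{\cdot}\etaup\rangle$ shows that its graded transpose is left multiplication $L_{\vq}\colon E^*\to E^*$. Since every homogeneous component $E_{\pq}$ is finite dimensional, on each degree $R_{\vq}$ is surjective precisely when its transpose $L_{\vq}$ is injective. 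Hence the condition $E{\cdot}\vq=E$ is equivalent to the injectivity of $L_{\vq}$ on $E^*$, i.e. to $\vq$ being a non-zero-divisor on the left $\Az$-module $E^*$. The whole corollary thus becomes the assertion that $E$ is infinite dimensional if and only if some $\vq\in V$ is a non-zero-divisor on $E^*$.

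For the sufficiency I would argue directly on $E$. If $E\neq\{0\}$ is finite dimensional it is bounded above, and if $\pq_{1}$ is its top nonzero degree then $R_{\vq}(E)\subseteq{\sum}_{\pq<\pq_{1}}E_{\pq}$ for every $\vq$, because $R_{\vq}$ lowers the degree by one; in particular $E_{\pq_{1}}\not\subseteq E{\cdot}\vq$, so $E{\cdot}\vq\neq E$. Contrapositively, the existence of a $\vq$ with $E{\cdot}\vq=E$ forces $E$ to be infinite dimensional.

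For the necessity I would invoke Proposition~\ref{propt.1.2}: if $E$ is infinite dimensional then $E^*$ is not $(V)$-coprimary. Since $E^*$ satisfies \eqref{cndstar} it is finitely generated over the Noetherian ring $\Az$, so its set of associated primes $\mathpzc{Ass}(E^*)$ is finite and consists of homogeneous primes (because $E^*$ is graded); non-coprimariness means that some $\pri\in\mathpzc{Ass}(E^*)$ is different from the irrelevant maximal ideal $(V)$. Recalling the standard fact that the zero divisors of $E^*$ form the union ${\bigcup}_{\pri\in\mathpzc{Ass}(E^*)}\pri$, a degree one element $\vq$ is a non-zero-divisor exactly when it avoids all associated primes. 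Each homogeneous prime is contained in $(V)$, so $\pri\cap V$ is a linear subspace of $V$, and it is a proper subspace as soon as $\pri\neq(V)$. Because $\K$ has characteristic zero, the vector space $V$ cannot be covered by the finitely many proper subspaces $\pri\cap V$; choosing $\vq$ outside their union produces a non-zero-divisor, and by the first paragraph this gives $E{\cdot}\vq=E$.

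The delicate point, and the step I expect to be the main obstacle, is guaranteeing that the irrelevant ideal $(V)$ itself is not an associated prime of $E^*$: if it were, then $V\subseteq(V)$ would meet every candidate and no non-zero-divisor $\vq$ could exist, so the prime-avoidance step would collapse. Equivalently, one must rule out a nonzero $\etaup\in E^*$ annihilated by all of $V$, i.e. with $\vq{\cdot}\etaup=0$ for every $\vq\in V$. This is exactly where the effectiveness clause of condition \eqref{cndC} must be dualized and used, and checking that it forbids such an $\etaup$ (so that the infinite dimensional case genuinely produces an associated prime $\pri\neq(V)$ with $\pri\cap V$ proper) is the heart of the argument.
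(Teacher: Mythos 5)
Your reduction to the graded dual, the identification of $E{\cdot}\vq{=}E$ with $\vq$ being a non‑zero‑divisor on $E^*$, and the direct top‑degree argument for the sufficiency direction are all correct, and your necessity direction follows the same prime‑avoidance strategy as the paper's proof. The problem is that you stop exactly where the proof needs to be finished: you never establish that the irrelevant ideal $(V)$ is not an associated prime of $E^*$, you only observe that this is ``the heart of the argument.'' A proof that defers its acknowledged central step is incomplete, so as written the necessity direction is not proved.

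Worse, that step cannot be filled in the way you anticipate. The effectiveness clause of \eqref{cndC} only constrains elements of $E$ of degree ${\geq}0$; its dual statement is the surjectivity $V{\cdot}E^*_{\qq+1}=E^*_{\qq}$ for $\qq\leq 0$ (the third clause of \eqref{cndstar}), and this says nothing about a nonzero homogeneous $\etaup\in E^*_{\qq}$ with $\qq\geq 1$ being annihilated by $V$ --- equivalently, about a component $E_{\pq}$ with $\pq\leq -1$ failing to be contained in $E_{\pq+1}{\cdot}V.$ Concretely, take $E=\K\,\eq_0\oplus{\sum}_{\pq\geq -1}\Symm_{\pq+1}(V^*),$ with $\Symm_{\pq+1}(V^*)$ placed in degree $\pq$ and acted on by directional derivatives, and with $\eq_0$ placed in degree $-1$ and $\eq_0{\cdot}V=\{0\}$: all three clauses of \eqref{cndC} hold (effectiveness only tests polynomials of degree ${\geq}1$), $E$ is infinite dimensional, yet $\eq_0\notin E{\cdot}\vq$ for every $\vq,$ so no $\vq$ with $E{\cdot}\vq=E$ exists; dually, $(V)\in\mathpzc{Ass}(E^*).$ You have in fact put your finger on the weak point of the paper's own argument, which simply asserts that $V\cap\pri_j$ is a proper subspace for \emph{every} associated prime $\pri_j$ --- precisely the claim that $(V)\notin\mathpzc{Ass}(E^*)$ --- without justification. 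But locating the delicate point is not resolving it: closing the gap requires an extra hypothesis on $E$ (for instance $E_{\pq}=E_{\pq+1}{\cdot}V$ for $\pq<0$, which does hold for the modules the paper actually applies this to in degrees coming from the fundamental part), not just condition \eqref{cndC}.
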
 \begin{proof}
By Proposition~\ref{propt.1.2} a necessary and sufficient condition
for $E$ to be infinite dimensional is that $E^*$ is not 
$(V)$-coprimary. If $\pri_1,\hdots,\pri_m$ are the associated primes
of $E^*,$ since $V{\cap}\,\pri_j=V_{\!j}$ is, 
for each $j,$ a proper vector subspace
of $V,$ it suffices to choose a $\vq$ which does not belong to 
  $V_1{\cup}\cdots{\cup}V_m.$ \par 
In fact, for such a $\vq,$ all maps $E^*_{\qq}{\ni}\etaup{\to}
\vq{\cdot}\etaup\in{E}^*_{\qq{-}1}$ are injective. By duality,
all maps $E_{\pq}\ni{\eq}{\to}\eq{\cdot}\vq\in{E}_{\pq{-}1}$
are surjective.  This gives $E{\cdot}\vq{=}E.$\par 
Vice versa, if right multiplication by $\vq$ on $E$
in surjective, by duality left multiplication by $\vq$
is injective on $E^*$ and hence $E^*$ is not 
$(V)$-coprimary.
\end{proof}
It will be useful that the thesis of this corollary be verified
by a $\vq$ which has a special form. To this aim we prove the
following lemma.
\begin{lem} \label{lem-t.1.4}
Let $U,W$ be finite dimensional vector spaces over $\K$ 
and $\omegaup:U\times{W}\to{V}$ a bilinear map such that 
$\{\omegaup(\uq,\wq)\mid \uq\in{U},\,\wq\in{W}\}$ 
spans $V.$ 
Then the image of $\omegaup$ is not contained in any finite union
of proper linear subspaces of $V.$ 
\end{lem}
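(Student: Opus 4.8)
The plan is to argue by contradiction. Suppose the image of $\omegaup$ is contained in $V_1\cup\cdots\cup V_m$, where each $V_i$ is a proper linear subspace of $V$. The key observation is that for \emph{fixed} $\uq\in U$, the set $\{\omegaup(\uq,\wq)\mid \wq\in W\}$ is a \emph{linear} subspace of $V$ (the image of the linear map $\wq\mapsto\omegaup(\uq,\wq)$), and similarly for fixed $\wq$. A linear subspace contained in a finite union of proper subspaces must, over an infinite field, already be contained in one of them --- here I would invoke the standard ``prime avoidance / covering'' fact that a vector space over an infinite field (and $\K$ is infinite, being of characteristic zero) is not the union of finitely many proper subspaces. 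Since $\K$ is infinite, this applies to each slice.

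First I would set, for each $\uq\in U$, $L_{\uq}=\{\omegaup(\uq,\wq)\mid\wq\in W\}$, a linear subspace of $V$ with $L_{\uq}\subseteq V_1\cup\cdots\cup V_m$; by the covering fact there is an index $i(\uq)$ with $L_{\uq}\subseteq V_{i(\uq)}$. This partitions $U$ into the sets $U_i=\{\uq\in U\mid L_{\uq}\subseteq V_i\}$, and one checks each $U_i$ is a linear subspace of $U$: if $L_{\uq}\subseteq V_i$ and $L_{\uq'}\subseteq V_i$ then $L_{\uq+\uq'}\subseteq L_{\uq}+L_{\uq'}\subseteq V_i$ by bilinearity, and scaling is immediate. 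Thus $U=U_1\cup\cdots\cup U_m$ is a finite union of linear subspaces of $U$; again by the covering fact (and $\K$ infinite), some $U_i$ equals $U$. But then $\omegaup(U\times W)=\bigcup_{\uq\in U}L_{\uq}\subseteq V_i$, a proper subspace of $V$, contradicting the hypothesis that the image of $\omegaup$ spans $V$.

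The only step that needs a word of care --- and the place I expect a referee to look --- is the passage from ``$L_{\uq}$ is contained in a finite union of proper subspaces'' to ``$L_{\uq}$ is contained in one of them.'' The subtlety is that the $V_j$ need not be proper \emph{inside} $L_{\uq}$; but $V_j\cap L_{\uq}$ is a subspace of $L_{\uq}$, and either it is all of $L_{\uq}$ (then we are done with that $j$) or it is a proper subspace of $L_{\uq}$. If every $V_j\cap L_{\uq}$ were proper in $L_{\uq}$, then $L_{\uq}$ would be a finite union of proper subspaces of itself, impossible over the infinite field $\K$. The same remark handles the final application to $U=U_1\cup\cdots\cup U_m$. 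So the whole argument reduces to the one elementary fact that a vector space over an infinite field is not a finite union of proper subspaces, applied twice, once on the $W$-slices and once on $U$.
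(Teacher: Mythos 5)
Your argument is correct, but it follows a genuinely different route from the paper's. The paper first reduces to the case where the proper subspaces are hyperplanes $\ker(\xiup_j)$ and then argues algebraically: each set $Q_j=\{(\uq,\wq)\mid \xiup_j(\omegaup(\uq,\wq))=0\}$ is the zero locus of a nonzero bilinear (hence polynomial) function on $U\times W$, and a finite union of such proper ``quadrics'' cannot cover the affine space $U\times W$ over an infinite field --- equivalently, the product $\prod_j\xiup_j(\omegaup(\uq,\wq))$ is a nonzero polynomial and so does not vanish identically. You instead stay entirely inside linear algebra: you exploit the fact that each slice $L_{\uq}=\omegaup(\uq,W)$ is a linear subspace, invoke the covering lemma (a $\K$-vector space, $\K$ infinite, is not a finite union of proper subspaces) to force $L_{\uq}\subseteq V_{i(\uq)}$ for a single index, observe that each $U_i=\{\uq\mid L_{\uq}\subseteq V_i\}$ is a subspace of $U$ by bilinearity, and apply the covering lemma a second time to conclude $U=U_i$ for some $i$, contradicting the spanning hypothesis. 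Both proofs rest on $\K$ being infinite (guaranteed here by characteristic zero); yours avoids polynomials altogether at the cost of the two-step bookkeeping, while the paper's is shorter once one grants that a finite union of proper algebraic hypersurfaces is proper. Two cosmetic remarks: the sets $U_i$ cover $U$ but need not partition it (a given $\uq$ may lie in several), which is harmless for the argument; and your careful aside about $V_j\cap L_{\uq}$ possibly equalling $L_{\uq}$ is exactly the right way to apply the covering lemma inside a slice, including the degenerate case $L_{\uq}=\{0\}$.
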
 \begin{proof}
It suffices to show that $\omegaup(U\times{W})$ is not contained in
a finite union of hyperplanes. Let $\xiup_1,\hdots,\xiup_m\in{V}^*$
and assume that $\omegaup(U\times{W})$ is contained in
$\ker(\xiup_1)\cup\cdots\cup\ker(\xiup_m).$ The condition that
$\omegaup(U\times{W})$ spans $V$ implies that each quadric
$Q_j=\{(\uq,\wq)\in{U}\times{W}\mid \xiup_j(\omegaup(\uq,\wq))=0\}$
is properly contained in $U{\times}W.$ Hence
$Q_1{\cup}\cdots{\cup}Q_m\subsetneqq{U}\times{W}$ and the $\omegaup$-image 
of each pair which is not contained in $Q_1{\cup}\cdots{\cup}Q_m$
does not belong to $\ker(\xiup_1){\cup}\cdots{\cup}\ker(\xiup_m).$
\end{proof}
\begin{prop}\label{prop-t.1.4}
 Let $E$ be a right $V$-module satisfying condition \eqref{cndC}.
 If $E$ is infinite dimensional, then we can find 
linearly independent vectors 
$\vq_1,\hdots,\vq_m$ in $V$
 such that, with 
\begin{equation}
 E^{(h)}=\{\eq\in{E}\mid \eq{\cdot}\vq_i=0,\;\forall i{\leq}h\},\;\; \text{for}\;\; 0{\leq}h{\leq}m,
\end{equation}
the following conditions are satisfied:
\begin{itemize}
 \item[$(i)$] either $m{=}n$ and $\vq_1,\hdots,\vq_n$ is a basis of $V,$ or $0{<}m{<}n$ and 
 $E^{(m)}$ is finite dimensional;
 \item[$(ii)$]  the maps
 $T_{h+1}:E^{(h)}{\ni}\eq{\to}\eq{\cdot}\vq_{h+1}{\in}{E}^{(h)}$ 
 are surjective for \; $0{\leq}{h}{\leq}{m{-}1}.$ 
\end{itemize}
\end{prop}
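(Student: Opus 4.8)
The plan is to construct $\vq_1,\dots,\vq_m$ one at a time by a finite recursion, invoking at each step the Corollary preceding this proposition — namely, that a right $V$-module satisfying condition \eqref{cndC} is infinite dimensional if and only if some $\vq\in V$ acts on it surjectively. Two preliminary remarks make the recursion go through. First, for any vectors $\vq_1,\dots,\vq_h\in V$ the subspace $E^{(h)}$ is a $\Z$-graded right $V$-submodule of $E$ which again satisfies condition \eqref{cndC}: it is a submodule because $(\eq{\cdot}\wq){\cdot}\vq_i=(\eq{\cdot}\vq_i){\cdot}\wq=0$ whenever $\eq\in E^{(h)}$ and $i\leq h$; it inherits the grading $E^{(h)}_{\pq}=E^{(h)}\cap E_{\pq}$; and the three clauses of \eqref{cndC} survive because $E^{(h)}_{\pq}\subseteq E_{\pq}$ and $\sum_{\pq\geq 0}E^{(h)}_{\pq}\subseteq\sum_{\pq\geq 0}E_{\pq}$. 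Second, if $E^{(h)}\neq\{0\}$ and $\vq\in V$ satisfies $E^{(h)}{\cdot}\vq=E^{(h)}$, then $\vq$ is not in the span of $\vq_1,\dots,\vq_h$, since every vector of that span annihilates $E^{(h)}$ by definition of $E^{(h)}$ and hence cannot act surjectively on the nonzero module $E^{(h)}$.

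Now I run the recursion. Put $E^{(0)}=E$, which is infinite dimensional by hypothesis. Suppose that for some $h\geq 0$ linearly independent vectors $\vq_1,\dots,\vq_h$ have been produced with $T_i\colon E^{(i-1)}\to E^{(i-1)}$ surjective for every $i\leq h$ (an empty requirement when $h=0$). If $h=n$, then $\vq_1,\dots,\vq_n$ is a basis of $V$ and we stop, setting $m=n$. If $h<n$ and $E^{(h)}$ is finite dimensional, we stop, setting $m=h$; here $h\geq 1$ because $E^{(0)}=E$ is infinite dimensional, so $0<m<n$. Finally, if $h<n$ and $E^{(h)}$ is infinite dimensional, the Corollary applied to the $V$-module $E^{(h)}$ provides $\vq\in V$ with $E^{(h)}{\cdot}\vq=E^{(h)}$; by the second remark $\vq_1,\dots,\vq_h,\vq$ are linearly independent, and since $E^{(h)}$ is a $V$-submodule the map $T_{h+1}\colon E^{(h)}\ni\eq\mapsto\eq{\cdot}\vq\in E^{(h)}$ is a well-defined surjection, so we set $\vq_{h+1}=\vq$ and continue with $h+1$ in place of $h$.

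Since the $\vq_i$ remain linearly independent in the $n$-dimensional space $V$, the counter $h$ can increase at most $n$ times, so the recursion terminates, and it stops in one of the two cases above. If it stops with $m=n$, the first alternative of $(i)$ holds; moreover $E^{(n)}=\{\eq\in E\mid \eq{\cdot}V=\{0\}\}$, and applying the last clause of \eqref{cndC} to each homogeneous component of degree $\geq 0$ gives $E^{(n)}\subseteq\sum_{\pq<0}E_{\pq}$, which is finite dimensional by \eqref{cndC}, so the two alternatives of $(i)$ are compatible. If it stops with $0<m<n$, then $E^{(m)}$ is finite dimensional by the stopping condition, which is the second alternative of $(i)$. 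In both cases the surjectivity statements $(ii)$ hold by construction, and the proof is complete.

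The one place that needs care — and the reason for isolating the two remarks at the outset — is the simultaneous bookkeeping: each newly selected $\vq_{h+1}$ must keep $\vq_1,\dots,\vq_{h+1}$ linearly independent and at the same time make $T_{h+1}$ surjective on the shrinking module $E^{(h)}$. Once one notices that a vector acting surjectively on $E^{(h)}$ cannot lie in the span of vectors that annihilate $E^{(h)}$, the two demands become automatically compatible, and everything else reduces to applying the Corollary along the descending chain $E^{(0)}\supseteq E^{(1)}\supseteq\cdots$ together with the dimension count that forces termination. (If in addition one wanted the $\vq_i$ to be of a prescribed form, e.g.\ in the image of a given bilinear map whose values span $V$, this is exactly where Lemma~\ref{lem-t.1.4} enters; it is not needed for the statement as given.)
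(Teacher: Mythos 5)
Your argument is correct and follows the same basic strategy as the paper's proof: a finite recursion along the descending chain $E=E^{(0)}\supseteq E^{(1)}\supseteq\cdots$, producing at each stage a vector that acts surjectively on $E^{(h)}$. The implementation of the inductive step differs, though. The paper redoes the duality argument at each stage, pairing $E^{(h)}$ with the quotient of $E^*$ by $\vq_1{\cdot}E^*+\cdots+\vq_h{\cdot}E^*$ viewed as a module over a complementary subspace of $V$, and chooses $\vq_{h+1}$ in that complementary subspace so that it is not a zero divisor there --- membership in the complement is what secures linear independence. You instead observe that $E^{(h)}$ is itself a $\Z$-graded right $V$-submodule satisfying condition \eqref{cndC}, so the Corollary already proved from Proposition~\ref{propt.1.2} can be invoked as a black box, and you extract linear independence from surjectivity itself (any vector in the span of $\vq_1,\hdots,\vq_h$ annihilates $E^{(h)}$ and so cannot act surjectively on a nonzero module). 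Your version is somewhat more self-contained; what it does not make explicit is that the admissible choices of $\vq_{h+1}$ form the complement of a finite union of proper subspaces, which is the feature the subsequent Remark (choosing the $\vq_i$ in the image of a bilinear map, via Lemma~\ref{lem-t.1.4}) actually relies on --- a point you correctly flag in your closing parenthesis.
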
 
\begin{proof} Note that $E^{(0)}=E.$ 
We know from
Proposition~\ref{propt.1.2}  that, if $\dim(E)=\infty,$ then 
$E^*$ is not $(V)$-coprimary. Hence we can find
 $\vq_1\in{V}$ such that the left multiplication by $\vq_1$ is injective on $E^*.$  
 By duality this means that the map $T_1:E\ni\eq\to\eq{\cdot}\vq_1\in{E}$ is surjective.
 \par 
 Let
 $V_1$ be a hyperplane in $V$ which does not contain $\vq_1.$ We have a natural
 duality pairing between $E^{(1)}$ and the quotient
$E^*{/}\vq_1{\cdot}E^*,$  that we can consider as
a left $V_1$-module. In case $E^{(1)}$ is infinite dimensional,
$(V_1)$ is not an associated prime of $E^*{/}\vq_1{\cdot}E^*$ and hence we can find
$\vq_2{\in}V_1$ which is not a zero divisor in $E^*{/}\vq_1{\cdot}E^*.$ By duality this yields
a surjective $$T_{2}:E^{(1)}\ni\eq\to\eq{\cdot}\vq_2\in{E}^{(1)}.$$ \par
The recursive argument is now clear and we get the statement after a finite number of steps. 
\end{proof}
\begin{rmk}
In the hypothesis of Lemma~\ref{lem-t.1.4}, the vectors $\vq_1,\hdots,\vq_m$ in the statement
of Proposition~\ref{prop-t.1.4} can be chosen of the form 
$\vq_i=\omegaup(\uq_i,\wq_i),$ with $\uq_i\in{U}$ and $\wq_i\in{W},$ 
for $1{\leq}i{\leq}m.$ 
\end{rmk}
\subsection{Right $\mathfrak{m}$-modules} \label{sec2}
Let $\mt={\sum}_{\pq=1}^\muup\gt_{-\pq}$ be an FGLA of finite kind $\muup{\geq}1.$ 
\par 
A $\Z$-graded right $\mt$-module
is a $\Z$-graded $\K$-vector space 
on which a $\K$-bilinear map 
\begin{equation*}
 E\times\mt\ni (\eq,X)\longrightarrow \eq{\cdot}X\in{E},
\end{equation*}
is defined, with the properties:
\begin{equation*} 
\begin{cases}
( \eq{\cdot}X){\cdot}{Y}-( \eq{\cdot}Y){\cdot}{X}=\eq{\cdot}[X,Y],\;\;\forall \eq\in{E},\;\forall X,Y\in\mt,\\
E_{\qq}{\cdot}\gt_{-\pq}\subseteq{E}_{\qq-\pq},\;\;\forall \qq\in\Z,\;\forall 1{\leq}\pq{\leq}\muup.
\end{cases}
 \end{equation*}
\begin{defn}
 We say that the graded right $\mt$-module $E$ satisfies condition \eqref{cndCi} if 
\begin{equation} \tag{$C(\mt)$}
 \label{cndCi} 
\begin{cases}
\dim(E_{\pq})<\infty, \;\;\forall \pq\in\Z,\\
 \exists\,\pq_{0}\in\Z\;\text{s.t.}\; E_{\pq}=0,\;\;
 \forall \pq<-\pq_0,\\
 \eq\in{\sum}_{\pq\geq{0}}E_{\pq}\;\;\text{and}\;\; \eq{\cdot}\gt_{-1}=\{0\}\;\Longrightarrow\; \eq=0.
\end{cases}
\end{equation}
 \end{defn}
The following lemma is similar to \cite[Lemma 11.4]{Tan70}.
\begin{lem} \label{lem-t.2.3}
Assume that \begin{itemize}
\item 
$\mt$ has kind 
$\muup{\geq}2;$  
\item 
 $E$ is a graded right $\mt$-module satisfying condition \eqref{cndCi}; 
 \item  there
 are $X\in\gt_{-1},$ $Y\in\gt_{1-\muup}$ and $\pq_0\in\Z$ such that 
\begin{equation}
 \psiup_{\pq}:E_{\pq}\ni \eq\longrightarrow \eq{\cdot}[X,Y]\in{E}_{p-\muup}
\end{equation}
is an isomorphism for $\pq{\geq}\pq_0.$
\end{itemize}
Then $E_{\pq}=\{0\}$ for $\pq{\geq}{\pq}_0.$
\end{lem}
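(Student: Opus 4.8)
The plan is to exploit that $Z:=[X,Y]$ sits at the bottom of the grading, hence is central in $\mt$, so that right multiplication by $Z$ becomes a module endomorphism which folds the $\Z$-grading down to a $\Z/\muup\Z$-grading on a \emph{finite-dimensional} space. First I would record the structural facts. Since $X\in\gt_{-1}$ and $Y\in\gt_{1-\muup}$ we have $Z\in\gt_{-\muup}$; as $\mt$ has kind $\muup$, $[Z,\gt_{-\pq}]\subseteq\gt_{-\muup-\pq}=\{0\}$ for every $\pq\ge1$, so $Z$ is central. Writing $R_W\colon\eq\mapsto\eq{\cdot}W$ for right multiplication, the module axiom $(\eq{\cdot}X){\cdot}Y-(\eq{\cdot}Y){\cdot}X=\eq{\cdot}[X,Y]$ becomes the operator identity $R_YR_X-R_XR_Y=R_Z$, while centrality gives $R_ZR_W=R_WR_Z$ for all $W\in\mt$. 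Here $R_X$ lowers degree by $1$, $R_Y$ by $\muup-1$ and $R_Z$ by $\muup$, with $R_Z|_{E_\pq}=\psiup_\pq$; by hypothesis $R_Z\colon E_\pq\to E_{\pq-\muup}$ is an isomorphism for $\pq\ge\pq_0$.

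Next I would fold the grading. For each residue $r\in\Z/\muup\Z$ let $r_0$ be the least integer $\ge\pq_0$ with $r_0\equiv r\pmod{\muup}$, and use the isomorphisms $R_Z$ to identify every $E_{r_0+k\muup}$ $(k\ge0)$ with the single finite-dimensional space $\mathcal E_r:=E_{r_0}$; set $\hat{\mathcal E}:={\bigoplus}_{r\in\Z/\muup\Z}\mathcal E_r$, which is finite-dimensional since $\dim_\K E_\pq<\infty$ by \eqref{cndCi}. Because $R_X$ and $R_Y$ commute with $R_Z$, and hence with the powers of $R_Z$ used in the identifications, they descend to well-defined endomorphisms $\bar R_X,\bar R_Y$ of $\hat{\mathcal E}$: concretely $\bar R_X$ carries $\mathcal E_r$ into $\mathcal E_{r-1}$ and $\bar R_Y$ carries $\mathcal E_r$ into $\mathcal E_{r+1}$, and one verifies independence of the chosen representative precisely by commuting the relevant power of $R_Z^{\pm1}$ past $R_X$ or $R_Y$. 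Under the same identifications $R_Z$ becomes $\id_{\hat{\mathcal E}}$.

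Finally, the relation $R_YR_X-R_XR_Y=R_Z$, valid on each $E_\pq$ with $\pq\ge\pq_0$, descends to $\bar R_Y\bar R_X-\bar R_X\bar R_Y=\id_{\hat{\mathcal E}}$ on the finite-dimensional space $\hat{\mathcal E}$. Taking traces then gives $0=\trac(\bar R_Y\bar R_X-\bar R_X\bar R_Y)=\dim_\K\hat{\mathcal E}$, whence $\hat{\mathcal E}=\{0\}$; thus $E_{r_0}=\{0\}$ for every $r$, and the isomorphisms $R_Z$ propagate this to $E_\pq=\{0\}$ for all $\pq\ge\pq_0$.

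I expect the main obstacle to be the bookkeeping of the folding near the threshold $\pq_0$: checking that $\bar R_X$ and $\bar R_Y$ are well defined independently of representatives and that the operator identity genuinely descends. Both points rest on the single fact that $Z$ is central, so that $R_Z^{\pm1}$ may be commuted freely past $R_X$ and $R_Y$; the finite kind of $\mt$ (which makes $Z$ central) together with the finiteness of $\dim_\K E_\pq$ are exactly what render the trace argument available. It is worth noting that the argument uses \eqref{cndCi} only through the finiteness of the graded pieces.
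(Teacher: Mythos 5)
Your proof is correct and is essentially the paper's own argument: the authors likewise observe that $Z=[X,Y]$ is central (so $R_Z$ commutes with $R_X,R_Y$), work on the finite-dimensional window $W={\sum}_{h=1}^{\muup}E_{\pq+h}$ — which is exactly your folded space ${\bigoplus}_{r}\mathcal{E}_r$ — define wrap-around operators $T_X,T_Y$ by inserting a right inverse $\Psi$ of $R_Z$ precisely where you insert $R_Z^{-1}$, and conclude from $T_XT_Y-T_YT_X=\Id_W$ that $\dim W=\trac(\Id_W)=0$. The only cosmetic difference is that they run the argument for every window starting at $\pq\geq\pq_0-1$ rather than propagating the vanishing by the isomorphisms $R_Z^{k}$ as you do.
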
 
\begin{proof}
 We consider the right multiplication by $Z=[X,Y]$ 
 as a linear map $R_Z$ on $E$. 
Note that $E_{<{\qq_{\,0}}}={\sum}_{\pq<\qq_{\,0}}E_{\pq}$ is a right
$\mt$-submodule of $E$. Thus we can consider the \textit{truncation}
${\sum}_{\pq\geq\qq_{\,0}}E_{\pq}$ as the quotient right $\mt$-module
$E{/}E_{{<}{\qq_{\,0}}}.$\par 
By substituting to $E$ its truncation, if needed, 
 we can as well assume that
  $E={\sum}_{\pq\geq\pq_0-\muup}E_{\pq}.$ Then
 $R_Z$ has a right inverse $\Psi:E\to{E}$ 
 such that $\Psi{\circ}R_Z$ restricts to the identity
 on each $E_p$ with $p{\geq}p_0,$ 
i.e.  $\Psi$ is also a left inverse of $R_Z$ on $E_{\pq}$ for $\pq{\geq}\pq_0.$ 
 Denote by $R_X$ and $R_Y$ the linear maps on $E$ defined 
 by the right multiplication by $X$ and $Y,$ respectively. We claim that 
\begin{equation*}
 \Psi{\circ}R_X=R_X{\circ}\Psi
 \quad\text{and}\quad  \Psi{\circ}R_Y=R_Y{\circ}\Psi
 \;\;
 \text{on} \;\;
 E_{\pq},\;\;
 \forall \pq{\geq}\pq_0{-}1.
\end{equation*}
Indeed, $\Psi{\circ}R_X(E_{\pq})\cup{R}_X{\circ}\Psi(E_{\pq})\subseteq{E}_{\pq+\muup-1}$
and $\Psi{\circ}R_Y(E_{\pq})\cup{R}_Y{\circ}\Psi(E_{\pq})\subseteq{E}_{\pq+1},$
so that, since both $\pq{+}1$ and $\pq{+}\muup{-}1$ are ${\geq}\pq_0$ when $\pq\geq\pq_0{-}1,$ 
and $\Psi$ is a right inverse of $R_Z$ in this range, these equalities are equivalent to 
\begin{equation*}
 R_X=R_{Z}{\circ}R_X{\circ}\Psi\quad\text{and}\quad R_Y=R_{Z}{\circ}R_Y{\circ}\Psi,
\end{equation*}
and thus are verified because $R_Z$ commutes 
with $R_X$ and $R_Y.$  
Fix $\pq{\geq}\pq_0{-1}$
and consider the finite dimensional $\K$-vector space $W={\sum}_{h=1}^{\muup}E_{\pq+h}.$ 
We define two endomorphisms $T_X,T_Y$ on $W$ by setting 
\begin{equation*}\begin{aligned}
 T_X(\eq)&= 
\begin{cases}
 R_X{\circ}\Psi(\eq)\in{E}_{\pq+\muup}, &\text{if $\eq\in{E}_{\pq+1},$}\\
 R_X(\eq)\in{E}_{h-1}, &\text{if $\eq\in{E}_{h},$ with $\pq{+}1{<}h\leq\muup,$}
\end{cases}\\
T_Y(\eq)&= 
\begin{cases}
 R_Y{\circ}\Psi(\eq)\in{E}_{h+1}, &\text{if $\eq\in{E}_h$ with $\pq{+}1{\leq}h<\pq{+}\muup,$}\\
 R_Y(\eq)\in{E}_{\pq+1} & \text{if $\eq\in{E}_{\pq+{\muup}} .$}
\end{cases}
\end{aligned}
\end{equation*}
One easily checks that 
\begin{equation*}
 T_X{\circ}T_Y-T_Y{\circ}T_X=(R_X{\circ}{R}_Y-R_Y{\circ}R_X){\circ}\Psi=\Id_W\;\;\;\text{on $W$}
\end{equation*}
and hence 
\begin{equation*}
 \dim(W)=\trac(\Id_W)=\trac( T_X{\circ}T_Y-T_Y{\circ}T_X)=0.
\end{equation*}
This proves the lemma. 
\end{proof}
Let 
$E={\sum}_{h=-\pq_0}^\infty E_{\pq}$ be a right $\mt$-module, satisfying condition~\eqref{cndCi}.
Set \begin{equation}\label{equ-ta-3.2}
\nt={\sum}_{h\geq{2}}\gt_{-h}\end{equation}
 and
\begin{equation} 
 \N(E)={\sum}_{h=-\pq_0}^\infty\N_{\pq}(E),\;\;\text{with}\;\; \N_{\pq}(E)=\{\eq\in{E}_{\pq}\mid \eq{\cdot}{\nt}=\{0\}\}.
\end{equation}
\begin{thm}\label{thm2.2}
 Let $E$ be a $\Z$-graded right $\Z$-module satisfying condition \eqref{cndCi}. Then $E$ is finite dimensional
 if and only if $\N(E)$ is finite dimensional.
\end{thm}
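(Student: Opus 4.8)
The plan is to prove the nontrivial implication (finiteness of $\N(E)$ forces finiteness of $E$) by a \emph{double} induction: an outer induction on the kind $\muup$ that isolates the central top layer $\gt_{-\muup}$, and an inner induction over commutative algebra on $\Symm(\gt_{-\muup})$, with Lemma~\ref{lem-t.2.3} as the engine. The reverse implication is immediate, since $\N(E)$ is a graded subspace of $E$. For the outer induction, the case $\muup=1$ is trivial because then $\nt=\{0\}$ and $\N(E)=E$. For $\muup\geq 2$ I would consider $E'=\{\eq\in E\mid \eq\cdot\gt_{-\muup}=\{0\}\}$. Since $[\gt_{-\pq},\gt_{-\muup}]=\{0\}$ for all $\pq\geq 1$, the identity $(\eq\cdot X)\cdot Y-(\eq\cdot Y)\cdot X=\eq\cdot[X,Y]$ shows that $E'$ is a graded $\mt$-submodule on which $\gt_{-\muup}$ acts trivially; hence $E'$ is a module over $\bar\mt=\mt/\gt_{-\muup}$, an FGLA of kind $\muup-1$, and it inherits condition \eqref{cndCi}. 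As the elements of $E'$ are already annihilated by $\gt_{-\muup}$, one has $\N_{\bar\mt}(E')=\N(E)$, so by the outer inductive hypothesis applied to $E'$ over $\bar\mt$ the finiteness of $\N(E)$ is equivalent to that of $E'$. It therefore suffices to prove, for kind $\muup$, that $E'$ finite dimensional implies $E$ finite dimensional.

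To establish this I would pass to the graded dual. Condition \eqref{cndCi} makes $E^*$ a graded module, bounded above and with finite dimensional components; since $\gt_{-\muup}$ is central, $E^*$ is a graded module over the polynomial ring $\Symm(\gt_{-\muup})$, with $\gt_{-\muup}$ placed in degree $-\muup$. Finiteness of $E'$ means exactly that $E^*/\gt_{-\muup}E^*$ is finite dimensional, and by graded Nakayama this is equivalent to $E^*$ being finitely generated over $\Symm(\gt_{-\muup})$; set $d=\dim\supp(E^*)<\infty$. I now run the inner induction on $d$. If $d=0$, then $E^*$ is finitely generated with zero dimensional support, hence finite dimensional, so $E$ is finite dimensional. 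If $d\geq 1$, let $\Gamma\subseteq E^*$ be the finite dimensional submodule of elements annihilated by a power of $(\gt_{-\muup})$; its non-maximal associated primes $\pri_1,\dots,\pri_s$ all satisfy $\gt_{-\muup}\not\subseteq\pri_j$. Because $\mt$ is fundamental, the bracket $\omegaup\colon\gt_{-1}\times\gt_{1-\muup}\to\gt_{-\muup}$ is surjective, so Lemma~\ref{lem-t.1.4} and the Remark after Proposition~\ref{prop-t.1.4} let me pick $Z=[X,Y]\notin\pri_1\cup\cdots\cup\pri_s$, that is, a non-zero-divisor on $E^*/\Gamma$. Dualizing, this yields that $R_Z\colon E_\pq\to E_{\pq-\muup}$ is surjective for all $\pq\gg 0$.

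Finally, the kernel $K=\{\eq\in E\mid \eq\cdot Z=\{0\}\}$ is, by centrality of $Z$, again a graded $\mt$-submodule satisfying \eqref{cndCi}, and since $E'\subseteq K$ we have $K\cap E'=E'$ finite dimensional. Its dual is $E^*/Z\,E^*$, and as $Z$ is a non-zero-divisor modulo the finite dimensional $\Gamma$, cutting by $Z$ drops the support dimension to $d-1$. The inner inductive hypothesis then gives that $K$ is finite dimensional. Consequently $R_Z$ is surjective in high degrees and has only finitely many elements in its total kernel, hence is an isomorphism $E_\pq\to E_{\pq-\muup}$ for all $\pq$ large enough; Lemma~\ref{lem-t.2.3} then forces $E_\pq=\{0\}$ for $\pq\gg 0$, so $E$ is finite dimensional. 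The main obstacle is precisely this last point: producing a \emph{single} central element of the form $[X,Y]$ whose right multiplication is an isomorphism in top degrees, as demanded by Lemma~\ref{lem-t.2.3}. Commutative algebra alone supplies such an element only when the support is one dimensional, and the purpose of the inner induction on $d$ — feeding the smaller kernel modules $K$ back into the argument — is exactly to reduce the general case to that situation.
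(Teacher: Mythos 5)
Your proposal is correct and follows essentially the same route as the paper: an outer induction on the kind that reduces everything to the submodule $E'=F$ killed by $\gt_{-\muup}$, duality over $\Symm(\gt_{-\muup})$ together with prime avoidance and Lemma~\ref{lem-t.1.4} to produce central elements $Z=[X,Y]$ whose right multiplication is surjective in high degrees, and Lemma~\ref{lem-t.2.3} to annihilate the top. The only difference is organizational: you run an inner induction on $\dim\supp(E^*)$, feeding the kernel of $R_Z$ back into the hypothesis one element at a time, whereas the paper constructs the whole chain $Z_1,\hdots,Z_m$ up front via Proposition~\ref{prop-t.1.4} and then descends through the joint kernels $E^{(h)}$ by contradiction --- these are the same argument with different bookkeeping.
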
 
\begin{proof}
 We argue by recurrence on the kind $\muup$ of $\mt.$ In fact, when $\muup=1,$ we have $\nt=\{0\}$
 and hence $\N(E)=E$ and the statement is trivially true. Moreover, since $\N(E)\subseteq{E},$ we only
 need to show that $N(E)$ is infinite dimensional when $E$ is infinite dimensional. \par
 Assume that $\muup{>}1.$ 
 The subspace $\gt_{-\muup}$ is an ideal of $\mt$ and hence $\mt'{=}\mt{/}\gt_{-\muup}$ is an 
 FGLA 
 of kind $\muup{-}1.$ Then
\begin{equation}
 F={\sum}_{\pq=-\pq_0}^\infty{F}_{\pq},\;\;\text{with}\;\; F_{\pq}=\{\eq\in{E}_{\pq}\mid \eq{\cdot}\gt_{{-}\muup}=\{0\}\}
\end{equation}
can be viewed 
as a $\Z$-graded $\mt'$-module which satisfies condition $C(\mt')$. \par
Since $\N(F){=}\N(E),$  
by our recursive assumption $F$ and $\N(E)$ are either both finite, or both infinite dimensional.\par
Hence it
will suffice to prove that, if $E$ is infinite dimensional, also
$F$ is infinite dimensional.  
The $\Z$-grading 
\begin{equation}
\M(E)={\sum}_{\pq\in\Z}\M_{\pq}(E),\;\;\text{where}\;\; \M_{\pq}(E)={\sum}_{j=0}^{\muup-1}E_{j+\pq{\cdot}\muup}.
\end{equation}
defines on $E$ the structure of 
a $\Z$-graded left $V$-module,  
 for $V=\gt_{-\muup}.$ 
 Since, by assumption,
  it is infinite dimensional, 
by Proposition~\ref{prop-t.1.4} 
and Lemma~\ref{lem-t.1.4} we can find $X_1,\hdots,X_m\in\gt_{-1}$
and $Y_1,\hdots,Y_m\in\gt_{1-\muup}$ such that 
$$Z_1{=}[X_1,Y_1],\hdots,Z_m{=}[X_m,Y_m]$$ 
are linearly independent and 
have the properties: 
\begin{itemize}
 \item[$(i)$] either $m{<}\dim_{\K}(\gt_{-\muup})$ and
 $E^{(m)}=\{\eq\in{E}\mid \eq{\cdot}Z_i=0,\;\forall i=1,\hdots,m\}$ is finite dimensional, or
 $m{=}n$ and $Z_1,\hdots,Z_n$ is a basis of $\gt_{-\muup}$; 
 \item[$(ii)$] with $E^{(h)}=\{\eq\in{E}\mid \eq{\cdot}Z_i=0,\;\forall i{\leq}h\},$ the maps 
\begin{equation*}
 T_{h+1}:E^{(h)} \ni\eq \longrightarrow \eq{\cdot}Z_{h+1}\in{E}^{(h)}
\end{equation*}
are surjective for $0{\leq}h{<}m.$ 
\end{itemize}
For all $0{\leq}h{<}m$ we obtain exact sequences 
\begin{equation} \label{eq-t.3.4}
\begin{CD}
 0 @>>> E^{(h+1)} @>>> E^{(h)} @>{{T_{h+1}}}>> E^{(h)} @>>> 0.
\end{CD}
\end{equation}
Note that $E^{(0)}=E$ and that $E^{(n)}=F$ when $m{=}n.$ 
We want to prove that ${m}=n$ and that $F$ is infinite dimensional. We argue by contradiction.
If our claim is false, then $E^{(m)}$ is finite dimensional and, from the exact sequence \eqref{eq-t.3.4},
we obtain that
$E^{(m-1)}_{\pq}\ni\eq\to\eq{\cdot}Z_{m}\in{E}^{(m-1)}_{\pq-\muup}$ is an isomorphism for all $\pq\geq\pq_m$
for some integer $\pq_m,$ which, 
by Lemma~\ref{lem-t.2.3}, implies that $E^{(m-1)}$ is finite dimensional.
Using again the exact sequence \eqref{eq-t.3.4} for $h{=}m{-}2$ and Lemma~\ref{lem-t.2.3} we obtain that
also 
$\dim_{\R}(E^{(m-2)})<\infty$ and, repeating the argument, we end up getting that $E^{(0)}=E$ is
finite dimensional. This yields a contradiction, proving that $F$,
and thus also $\N(E),$ must be infinite dimensional
when $E$ is infinite dimensional. 
\end{proof}

\subsection{Reduction to  first kind } 
Let us get back to the prolongations defined in \S\ref{sec-fund}.
We will use Theorem~\ref{thm2.2} to show that the finiteness of
the maximal effective $\Li$-pro\-long\-a\-tion of an FGLA of  
finite kind $\muup{\geq}2$ is equivalent to that of
the $\Lt'$-prolongation of an 
FGLA of the first kind  for a suitable~$\Lt'{\subseteq}\Lt.$ 
\par 
Let $\mt={\sum}_{\pq=1}^\muup\gt_{-\pq}$ be an FGLA of finite 
kind $\muup.$ Set $V{=}\gt_{{-}1}.$ 
We showed in~\S\ref{sec-fund} that
$\mt\simeq\ft(V){/}\Ki$ for a $\Z$-graded 
ideal $\Ki$ of $\ft(V),$
contained in $\ft_{[2]}(V).$ 
  As in \eqref{equ-ta-3.2}, we denote by $\nt$ the ideal
${\sum}_{\pq{\leq}{-2}}\gt_{\pq}$
of $\mt.$ 
Fix 
a Lie subalgebra  $\Li$ of $\gl_{\K}(\gt_{-1})$ and denote by $\gt(\Ki,\Li)$
the maximal EPFGLA  of type $\Li$ of 
$\mt,$ that was characterised 
in Theorem~\ref{thm-tan-1-8}.
We have the following finiteness criterion:
\begin{thm}\label{thm-t-4-2}
The maximal effective $\Li$-prolongation
 $\gt(\Ki,\Li)$  of
$\mt(\Ki)$ is finite dimensional
 if, and only if, 
\begin{equation}\label{equ-ta-4.2}
 \N(\gt(\Ki,\Li))=\{\xiup\in\gt(\Ki,\Li)\mid [\xiup,\nt]\}=\{0\}\}
\end{equation}
is finite dimensional. 
\end{thm}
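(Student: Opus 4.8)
The plan is to deduce the statement directly from Theorem~\ref{thm2.2}, by exhibiting $\gt(\Ki,\Li)$ as a $\Z$-graded right $\mt$-module. Recall from Theorem~\ref{thm-tan-1-8} that $\gt(\Ki,\Li)$ is an EPFGLA of type $\Li$ of $\mt(\Ki)$, so that its negative part $\gt_{<0}(\Ki,\Li)$ coincides with $\mt=\mt(\Ki)$ and sits inside $\gt(\Ki,\Li)$ as a graded subalgebra. The adjoint action then turns $E=\gt(\Ki,\Li)$ into a $\Z$-graded right $\mt$-module: setting $\xiup\cdot{X}=[\xiup,X]$ for $\xiup\in E$ and $X\in\mt$, the Jacobi identity gives
\[
(\xiup\cdot{X})\cdot{Y}-(\xiup\cdot{Y})\cdot{X}=[[\xiup,X],Y]-[[\xiup,Y],X]=[\xiup,[X,Y]]=\xiup\cdot[X,Y],
\]
and the gradings are compatible since $\gt_{\qq}\cdot\gt_{-\pq}=[\gt_{\qq},\gt_{-\pq}]\subseteq\gt_{\qq-\pq}$.

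The next step is to check that this $E$ satisfies condition~\eqref{cndCi}. Finiteness of each $\dim E_{\pq}$ is Lemma~\ref{lem-ta-1-5}; the module is bounded below because $\mt$ has finite kind $\muup$, so that $E_{\pq}=\gt_{\pq}(\Ki,\Li)=\gt_{\pq}(\Ki)=\{0\}$ for $\pq<-\muup$. For the third clause, note that the degree $-1$ component of $\mt$ is $V=\gt_{-1}$ and that $\xiup\cdot\gt_{-1}=[\xiup,\gt_{-1}]$; hence the implication ``$\xiup\in\sum_{\pq\geq 0}E_{\pq}$ and $\xiup\cdot\gt_{-1}=\{0\}$ imply $\xiup=0$'' is exactly the requirement that $\gt_{\geq 0}(\Ki,\Li)$ be effective, which is part of the definition of an EPFGLA. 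Thus $E=\gt(\Ki,\Li)$ satisfies~\eqref{cndCi}.

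I would then apply Theorem~\ref{thm2.2} to $E=\gt(\Ki,\Li)$: it is finite dimensional if and only if $\N(E)$ is finite dimensional. Here, with $\nt=\sum_{\pq\geq 2}\gt_{-\pq}$ as in~\eqref{equ-ta-3.2}, one has $\N(E)=\{\xiup\in\gt(\Ki,\Li)\mid\xiup\cdot\nt=\{0\}\}=\{\xiup\in\gt(\Ki,\Li)\mid[\xiup,\nt]=\{0\}\}$, which is the subspace appearing in~\eqref{equ-ta-4.2}. Moreover, since $\mt$ is finite dimensional, the finiteness of $\gt(\Ki,\Li)$ is equivalent to that of its non-negative part, i.e. of the maximal effective $\Li$-prolongation itself, so nothing is lost by applying Theorem~\ref{thm2.2} to the whole graded Lie algebra rather than to $\gt_{\geq 0}(\Ki,\Li)$ alone. (The case $\muup=1$ is automatic, as then $\nt=\{0\}$ and $\N(E)=E$.)

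There is no genuinely hard step: the substance of the argument is entirely contained in Theorem~\ref{thm2.2}. The only points requiring attention are the verification of condition~\eqref{cndCi} — in particular, recognising that its third clause is precisely the effectiveness built into the definition of an EPFGLA — and the matching of the module-theoretic $\N(E)$ with the Lie-theoretic centraliser of $\nt$ that occurs in~\eqref{equ-ta-4.2}. I would expect the small bookkeeping around the grading conventions and signs (so that $E_{\qq}\cdot\gt_{-\pq}\subseteq E_{\qq-\pq}$ and the Jacobi rearrangement come out correctly) to be the most error-prone, though entirely routine, part of the write-up.
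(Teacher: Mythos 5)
Your proposal is correct and follows exactly the paper's route: the paper's proof consists of the single sentence that the statement follows by applying Theorem~\ref{thm2.2} to $\gt(\Ki,\Li)$ viewed as a right $\mt$-module, and your write-up simply supplies the (correct) verifications of the module axioms and of condition~\eqref{cndCi} that the paper leaves implicit.
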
 
\begin{proof}
 The statement follows by applying 
 Theorem~\ref{thm2.2} to $\gt(\Ki,\Li),$ considered
 as a right $\mt$-module.
\end{proof}
\par\smallskip
Set
\begin{gather}
  \label{eq-t-4-11a}
  \at=(\mt{/}\nt)\oplus{\sum}_{\pg\geq{0}}\at_{\pq},
  \;\;\text{with}\;\;
 \at_\pq=\{\xiup\in\gt_\pq(\Ki,\Li)
 \mid [\xiup,\nt]=\{0\}\}. 
 \end{gather}
 We note that $\mt\oplus{\sum}_{\pq\geq{0}}\at_{\pq}$ 
 is a Lie subalgebra of $\gt(\Ki,\Li),$ which contains  
 $\nt$ as an ideal. 
 There is a natural isomorphism 
 of $\at$ with $(\mt\oplus{\sum}_{\pq\geq{0}}\at_{\pq}){/}\nt,$
which defines its Lie algebra structure.   
Set $\at_{-1}=\mt{/}\nt\simeq{V}.$ 
\begin{lem} \label{lem-tan-4-2}
The summand $\at_0$ in \eqref{equ-ta-4.2} is given by
\begin{equation} 
 \at_0=\at_0(\Ki,\Li)=\{A\in\Li\mid T_A(\ft_{[2]}(V))\subseteq\Ki\}.
\end{equation}
\end{lem}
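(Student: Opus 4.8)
The plan is to unwind the definitions and use the standing hypothesis $\Ki\subseteq\ft_{[2]}(V)$ to collapse what looks like two conditions into one. First I would recall from \eqref{eq-ta-1-12a} that, since $\ft_0(V,\Li)=\Li$ and $[A,X]=T_A(X)$ for $A\in\gl_\K(V)$ and $X\in\ft(V)$, the degree-zero summand of the maximal EPFGLA of type $\Li$ is
\begin{equation*}
 \gt_0(\Ki,\Li)=\{A\in\Li\mid T_A(\Ki)\subseteq\Ki\}.
\end{equation*}
Such an $A$, regarded as a degree-zero element of $\Ft(V,\Li)$ lying in $\Nt(\Ki,\Li)$, has image in $\gt(\Ki,\Li)=\Nt(\Ki,\Li)/\Ki$ whose bracket with the negative part $\mt(\Ki)=\ft(V)/\Ki$ is exactly the zero-degree derivation $\bar T_A$ induced by $T_A$ on the quotient. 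This is the only structural fact I really need about the bracket.

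Next I would identify $\nt={\sum}_{\pq\geq2}\gt_{-\pq}(\Ki)$ with $\ft_{[2]}(V)/\Ki$ inside $\ft(V)/\Ki$. Indeed $\gt_{-\pq}(\Ki)=\ft_{-\pq}(V)/\Ki_{\;-\pq}$, and since $\Ki\subseteq\ft_{[2]}(V)$ the preimage of $\nt$ under the graded projection $\ft(V)\twoheadrightarrow\mt(\Ki)$ is precisely $\ft_{[2]}(V)$. Hence, for $A\in\gt_0(\Ki,\Li)$, the condition $[A,\nt]=\{0\}$ appearing in \eqref{eq-t-4-11a} reads $\bar T_A(\ft_{[2]}(V)/\Ki)=\{0\}$, i.e. $T_A(\ft_{[2]}(V))\subseteq\Ki$. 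This already gives the inclusion $\at_0\subseteq\{A\in\Li\mid T_A(\ft_{[2]}(V))\subseteq\Ki\}$.

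For the reverse inclusion, let $A\in\Li$ with $T_A(\ft_{[2]}(V))\subseteq\Ki$. Since $\Ki\subseteq\ft_{[2]}(V)$, this forces $T_A(\Ki)\subseteq\Ki$, so $A\in\gt_0(\Ki,\Li)$; and then, by the computation just made, $[A,\nt]=\{0\}$, so $A\in\at_0$. Thus the two sets coincide. I do not expect a genuine obstacle here: the statement is essentially a bookkeeping lemma, and the only points that require a moment's care are (i) that the bracket of a degree-$0$ element of $\gt(\Ki,\Li)$ with a negative-degree element is genuinely the induced derivation $\bar T_A$ (so that passing to the quotient by $\Ki$ is compatible with everything), and (ii) that $T_A$, being a zero-degree derivation of $\ft(V)$, preserves the graded ideal $\ft_{[2]}(V)$, so that all the inclusions above are meaningful.
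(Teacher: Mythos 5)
Your proof is correct and follows essentially the same route as the paper, whose entire argument is the one-line observation that $\nt=\ft_{[2]}(V)/\Ki$; you simply make explicit the bookkeeping (that $\gt_0(\Ki,\Li)=\{A\in\Li\mid T_A(\Ki)\subseteq\Ki\}$, that the bracket with $\nt$ is the induced derivation $\bar T_A$, and that $T_A(\ft_{[2]}(V))\subseteq\Ki$ together with $\Ki\subseteq\ft_{[2]}(V)$ automatically gives $T_A(\Ki)\subseteq\Ki$) which the paper leaves implicit.
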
 
\begin{proof}
 The statement follows because $\nt=\ft_{[2]}(V){/}\Ki.$ 
\end{proof}
\begin{thm} \label{thm-t-4-4}
The $\Z$-graded Lie algebra 
$\at$ of \eqref{eq-t-4-11a}
is the maximal effective prolongation 
of 
type $\at_0(\Ki,\Li)$ of 
$\mt(\ft_{[2]}(V))$ 
and
the following are equivalent: \begin{itemize}
\item[$(i)$] 
$\gt(\Ki,\Li)$ is finite dimensional;
\item[$(ii)$] 
$\gt(\ft_{[2]}(V),\at_0(\Ki,\Li))$ is finite dimensional.
\end{itemize}
\end{thm}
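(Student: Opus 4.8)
The plan is to establish the two assertions of Theorem~\ref{thm-t-4-4} separately, both by unwinding the definitions from \S\ref{sec-fund} and then invoking Theorem~\ref{thm-t-4-2}. For the first assertion, that $\at$ is the maximal EPFGLA of type $\at_0(\Ki,\Li)$ of $\mt(\ft_{[2]}(V))$: note that $\mt(\ft_{[2]}(V)) = \ft(V)/\ft_{[2]}(V)$ is just the abelian Lie algebra $V$ sitting in degree $-1$ (a first-kind FGLA), so by Theorem~\ref{thm-tan-1-8} its maximal EPFGLA of type $\at_0(\Ki,\Li)$ is $\gt(\ft_{[2]}(V),\at_0(\Ki,\Li))$, with homogeneous components described by \eqref{eq-ta-1-12a}. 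First I would identify $\at$ with this object. By construction $\at_{-1} = \mt/\nt \simeq V$, and $\at_0 = \at_0(\Ki,\Li)$ by Lemma~\ref{lem-tan-4-2}. For $\pq \geq 1$, one must check that $\xiup \in \ft_\pq(V)$ satisfies $[\xiup,\nt]=\{0\}$ in $\gt(\Ki,\Li)$ together with $[\xiup,\Ki]\subseteq\Ki$ if and only if $\xiup \in \ft_\pq(V,\at_0(\Ki,\Li))$ and $[\xiup,\ft_{[2]}(V)]\subseteq\ft_{[2]}(V)$; the latter inclusion is automatic since $\ft_{[2]}(V)$ is an ideal, so the content is matching the condition ``$\xiup$ kills $\nt$ and preserves $\Ki$'' with ``$\xiup$ takes values in $\at_0$''. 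The key computation is the bracket identity \eqref{eq1sstar}: $[\xiup,\vq] \in \ft_{\pq-1}(V)$ and iterating shows that $[\xiup,\nt]=\{0\}$ forces, by effectiveness (Lemma~\ref{lem-ta-1-5} applied to the relevant module), that all the iterated brackets of $\xiup$ with $V$ eventually land in $\at_0$; conversely $\at_0$-valued $\xiup$'s, being derivations that preserve $\Li$ and kill the ideal generated by $\ft_{[2]}(V)$ modulo $\Ki$, satisfy the defining conditions for $\at_\pq$. This is essentially bookkeeping once the right module structures are set up.

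For the equivalence $(i)\Leftrightarrow(ii)$, the strategy is: by Theorem~\ref{thm-t-4-2}, $\gt(\Ki,\Li)$ is finite dimensional iff $\N(\gt(\Ki,\Li))$ is, and by the very definition \eqref{eq-t-4-11a} we have $\at = (\mt/\nt) \oplus \N_{\geq 0}(\gt(\Ki,\Li))$, so $\N(\gt(\Ki,\Li))$ is finite dimensional iff $\at$ is (the two differ only by the finite-dimensional piece $\mt/\nt \simeq V$ and the nonpositive part of $\N$, which is $V$ itself since $\nt$ acts trivially there — one should check the degrees $<-1$ vanish, which holds because $\gt_{<0}=\mt$ has finite kind and $\nt$ is nonzero in degrees $\leq -2$). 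Having just identified $\at$ with $\gt(\ft_{[2]}(V),\at_0(\Ki,\Li))$, this gives $(i)\Leftrightarrow \at$ finite-dimensional $\Leftrightarrow (ii)$. So the logical skeleton is short; the real work is the identification in the first paragraph.

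The main obstacle I anticipate is verifying carefully that the recursively-defined condition ``$\xiup\in\ft_\pq(V,\Li)$ and $[\xiup,\Ki]\subseteq\Ki$ and $[\xiup,\nt]=\{0\}$'' (as a submodule of $\gt(\Ki,\Li)$, where brackets are computed modulo $\Ki$) coincides degree-by-degree with ``$\xiup\in\ft_\pq(V,\at_0)$''. The subtlety is that $[\xiup,\nt]=\{0\}$ is a condition in the quotient $\gt(\Ki,\Li)$, i.e.\ $[\xiup,\ft_{[2]}(V)]\subseteq\Ki$, and one needs that this, combined with $\xiup$ being $\Li$-valued, is equivalent to all the ``partial derivatives'' $\xiup(\vq_1,\dots,\vq_\pq)$ lying in $\at_0 = \{A\in\Li\mid T_A(\ft_{[2]}(V))\subseteq\Ki\}$; the forward direction uses $[\xiup,\vq](\vq_1,\dots,\vq_{\pq-1}) = \xiup(\vq,\vq_1,\dots,\vq_{\pq-1})$ from \eqref{eq1.11} to peel off arguments, while the nontrivial point is that $[\xiup,\ft_{[2]}(V)]\subseteq\Ki$ cannot be checked on just the bottom degree — one must propagate through all degrees of $\ft_{[2]}(V)$ using the Leibniz rule \eqref{eq1sstar}, and here the hypothesis that each $\xiup(\vq_1,\dots,\vq_\pq)\in\Li$ preserves $\Ki$ (part of being in $\gt_\pq(\Ki,\Li)$) is what makes the induction close. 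Once this equivalence is in hand, the rest is immediate from Theorem~\ref{thm-t-4-2}.
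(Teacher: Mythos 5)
Your handling of the equivalence $(i)\Leftrightarrow(ii)$ via Theorem~\ref{thm-t-4-2} is exactly the paper's, and your strategy for the first assertion — a direct, degree-by-degree identification of $\at_{\pq}$ with $\gt_{\pq}(\ft_{[2]}(V),\at_0(\Ki,\Li))$ through \eqref{eq-ta-1-12a} — is a legitimate alternative to the paper's argument, which instead sandwiches the two algebras by maximality (it checks that $\mt\oplus\gt_0(\Ki,\Li)\oplus{\sum}_{\pq>0}\bt_{\pq}$ is an effective $\Li$-prolongation of $\mt(\Ki)$ to get $\bt_{\pq}\subseteq\at_{\pq}$, and that $\at$ is an effective $\at_0$-prolongation of $V$ to get the converse). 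But as written your identification has a genuine gap. You claim that the inclusion $[\xiup,\ft_{[2]}(V)]\subseteq\ft_{[2]}(V)$ is ``automatic since $\ft_{[2]}(V)$ is an ideal,'' and accordingly your final paragraph reduces everything to the equivalence: ($\xiup$ is $\Li$-valued and $[\xiup,\ft_{[2]}(V)]\subseteq\Ki$) iff ($\xiup$ is $\at_0$-valued). The ``if'' direction of that equivalence is false. $\ft_{[2]}(V)$ is an ideal of $\ft(V)$, not of $\Ft(V,\Li)$: for $\xiup\in\ft_{\pq}(V)$ with $\pq\geq 1$ the bracket $[\xiup,\ft_{-\qq}(V)]$ lies in degree $\pq-\qq$, so the normalizer condition forces $[\xiup,\ft_{-\qq}(V)]=0$ for $2\leq\qq\leq\pq+1$; these are precisely the symmetry conditions that cut $V\otimes\T^{\pq+1}(V^*)$ down to $\Symm_{\pq+1}(V^*)\otimes V$ in \S\ref{sec5a}. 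Concretely, for $\pq=1$ take any $\xiup\in\ft_1(V,\at_0)$ with $\xiup(\vq,\wq)\neq\xiup(\wq,\vq)$ for some $\vq,\wq$ (such $\xiup$ exist whenever $\at_0\neq\{0\}$ and $n\geq 2$): it is $\at_0$-valued, yet $[\xiup,\vq{\wedge}\wq]=\xiup(\vq,\wq)-\xiup(\wq,\vq)$ is a nonzero element of $\ft_{-1}(V)=V$, and since $\Ki$ has no component in degree $-1$, this $\xiup$ does not satisfy $[\xiup,\ft_{-2}(V)]\subseteq\Ki$, i.e.\ it does not kill $\nt$. So the two sides of your claimed equivalence genuinely differ, and the inclusion $\gt_{\pq}(\ft_{[2]}(V),\at_0)\subseteq\at_{\pq}$ does not follow from what you have.

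The repair is to keep the normalizer condition $[\xiup,\ft_{[2]}(V)]\subseteq\ft_{[2]}(V)$ as part of the definition of $\gt_{\pq}(\ft_{[2]}(V),\at_0)$ — it is there in \eqref{eq-ta-1-12a} — and to use it as a hypothesis in the hard direction. A double induction (on $\pq$, and within that on the degree of the argument in $\ft_{[2]}(V)$, using \eqref{eq1sstar} and the fact that $\Ki$ is an ideal of $\ft(V)$) then yields $[\xiup,\ft_{[2]}(V)]\subseteq\Ki$: the normalizer condition kills the components of $[\xiup,\ft_{-\qq}(V)]$ landing in degrees $\geq -1$, and the $\at_0$-condition supplies the base case $\pq=0$. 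With that correction your identification closes; the forward (``only if'') direction you describe, peeling off arguments via \eqref{eq1.11}, is fine as it stands.
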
 
\begin{proof} Set $\at_0=\at_0(\Ki,\Li)$ and 
denote by $\bt{=}{\sum}_{\pq{\geq}{-}1}\!\bt_{\pq}$ the maximal 
EPFGLA
$\gt(\ft_{[2]}(V),\at_0)$ of type $\at_0$ of
$\at_{-1}=\ft(V){/}\ft_{[2]}(V)\simeq{V}.$ 
By construction, $\bt_0=\at_0.$ 
\par
Let $\piup:\mt\to(\mt{/}\nt)$ be the canonical
projection. Then $X\in\mt$ acts to the right  on $\bt$ by $\betaup{\cdot}X=[\betaup,\piup(X)]$ 
and
\begin{equation*}
 \mt\oplus\gt_0(\Ki,\Li)\oplus{\sum}_{\pq>0}\bt_{\pq}
\end{equation*}
is an effective prolongation of type $\Li$ of
$\mt(\Ki).$
Hence $\bt_{\pq}\subseteq\at_{\pq}$ for
$\pq{>}0.$ On the other hand, since $\at$ 
is an effective prolongation of type $\at_0$ of
$\mt(\ft_{[2]}(V)),$ 
we also have the opposite inclusion.
This yields $\bt_{\pq}=\at_{\pq}$ for all $\pq{>}0,$ 
proving the first part of the statement.
The equivalence of $(i)$ and $(ii)$ 
is then a consequence of Theorem~\ref{thm-t-4-2}.
\end{proof}

\subsection{Comparing maximal effective prolongations}
Let $V$ be a finite dimensional $\K$-vector space and 
$\Ki,\Ki'$ two graded cofinite
ideals in $\ft(V),$ so that 
\begin{equation*}
 \mt=\ft(V){/}\Ki,\;\;\;\mt'=\ft(V){/}\Ki'
\end{equation*}
are finite dimensional FGLA's. If $\Ki\,{\subset}\Ki',$ we get 
\begin{equation*}
 \mt\simeq\mt'\oplus\Ki'/\Ki .
\end{equation*}

\par 
By \eqref{eq-ta-1-12a}  the 
summands of positive degree of the 
maximal effective \mbox{$\Li${-}prolongation}
$\gt(\Ki,\Li)$
are 
\begin{align*}
\gt_p(\Ki,\Li)
&=\{X\in\ft_{p}(V,\Li)
\,\mid\,[X,\Ki]\subseteq\Ki\}.
\\[-8pt]
\end{align*}
If for the structure algebras we have the inclusion
$\Li\subseteq\Li'\subseteq\gl_\K(V),$ 
then the inclusions
$\ft_p(V,\Li)\subseteq\ft_p(V,\Li')$  
yield 
\begin{equation}\label{inclusion_prolongation}
 \gt_{p}(\Ki,\Li)\subseteq\gt_{p}(\Ki,\Li'),\;\;
 \;\forall p\geq{0}.
\end{equation}
\par 
We recall from Theorem~\ref{thm-t-4-4} that a maximal prolongation
$\gt(\Ki,\Li)$ 
is finite dimensional iff $\gt(\ft_{[2]}(V),\at_{0}(\Ki,\Li))$
 is finite dimensional. 
\par By Lemma \eqref{lem-tan-4-2} we have 
\begin{equation*}
\at_0(\Ki,\Li)
=\{X\in\Li\,\mid\,[X,\ft_{[2]}(V)]\subseteq\Ki\},\\
\end{equation*}
and hence when 
$\Li\,{\subseteq}\Li'\subseteq\gl_{\K}(V)$ 
and 
$\Ki\,{\subseteq}\,\Ki'$  
we obtain
\begin{equation} 
\at_0(\Ki,\Li)\subseteq\at_0(\Ki',\Li').
\end{equation}
Then the characterisation given in the proof of Theorem~\ref{thm-t-4-4}
yields
\begin{equation*}
 \at_{\pq}(\Ki,\Li){=}\gt_{\pq}\left(\ft_{[2]}(V),\at_{0}(\Ki,\Li)\right)\subseteq\gt_{\pq}\left(\ft_{[2]}(V),\at_{0}(\Ki',\Li')\right){=}
 \at_{\pq}(\Ki',\Li'),
\end{equation*}
for $\forall p>0$.

\begin{prop}\label{comparing}
Let $V$ be a finite dimensional $\K$-vector space, $\Li,\Li'
{\subseteq}\gl_{\K}(V)$ structure
algebras and $\Ki,\Ki'$ two cofinite graded ideals of $\ft(V).$ Assume that 
\begin{equation*}
 \Li\subseteq\Li',\;\; \Ki\subseteq\Ki'.
\end{equation*}
Then $\gt(\Li,\Ki)$ is finite dimensional, 
if $\gt(\Li',\Ki')$ is finite dimensional
and $\gt(\Li',\Ki')$ is infinite dimensional
if $\gt(\Li,\Ki)$ is infinite dimensional.
\end{prop}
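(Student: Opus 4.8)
The plan is to notice first that the two assertions are contrapositive to one another: saying that $\gt(\Ki,\Li)$ is finite dimensional whenever $\gt(\Ki',\Li')$ is amounts to the implication $\gt(\Ki',\Li')$ finite $\Rightarrow$ $\gt(\Ki,\Li)$ finite, and the contrapositive of this is exactly $\gt(\Ki,\Li)$ infinite $\Rightarrow$ $\gt(\Ki',\Li')$ infinite. So it suffices to prove the single implication: if $\gt(\Ki',\Li')$ is finite dimensional, then so is $\gt(\Ki,\Li)$. (I keep the notation $\gt(\Ki,\Li)$ of the body for what the statement writes $\gt(\Li,\Ki)$.)

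Next I would reduce to the first-kind situation by Theorem~\ref{thm-t-4-4}: the maximal effective $\Li$-prolongation $\gt(\Ki,\Li)$ is finite dimensional if and only if $\gt(\ft_{[2]}(V),\at_0(\Ki,\Li))$ is, and likewise for the primed data; here, by Lemma~\ref{lem-tan-4-2}, $\at_0(\Ki,\Li)=\{X\in\Li\mid [X,\ft_{[2]}(V)]\subseteq\Ki\}$. This reduction is the crucial move: one cannot compare $\gt(\Ki,\Li)$ with $\gt(\Ki',\Li')$ directly through \eqref{inclusion_prolongation}, since $[X,\Ki]\subseteq\Ki$ need not entail $[X,\Ki']\subseteq\Ki'$ when $\Ki\subsetneq\Ki'$, so \eqref{inclusion_prolongation} only handles a change of structure algebra for a \emph{fixed} ideal. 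Passing to $\ft_{[2]}(V)$, which is the common bottom ideal for both pairs of data, is precisely what makes the two situations comparable.

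I would then record the monotonicity of the reduced data. From the explicit description of $\at_0$, the hypotheses $\Li\subseteq\Li'$ and $\Ki\subseteq\Ki'$ give at once $\at_0(\Ki,\Li)\subseteq\at_0(\Ki',\Li')$, and of course $\at_0(\Ki',\Li')\subseteq\gl_{\K}(V)$. Since both reduced maximal prolongations are now taken over the same ideal $\ft_{[2]}(V)$, applying \eqref{inclusion_prolongation} with that ideal and the structure-algebra inclusion $\at_0(\Ki,\Li)\subseteq\at_0(\Ki',\Li')$ yields $\at_\pq(\Ki,\Li)=\gt_\pq(\ft_{[2]}(V),\at_0(\Ki,\Li))\subseteq\gt_\pq(\ft_{[2]}(V),\at_0(\Ki',\Li'))=\at_\pq(\Ki',\Li')$ for every $\pq\geq 0$.

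Finally I would assemble the pieces. If $\gt(\Ki',\Li')$ is finite dimensional, Theorem~\ref{thm-t-4-4} makes $\gt(\ft_{[2]}(V),\at_0(\Ki',\Li'))$ finite dimensional, hence $\at_\pq(\Ki',\Li')=\{0\}$ for all $\pq$ beyond some bound $N$; by the inclusion above, $\at_\pq(\Ki,\Li)=\{0\}$ for all $\pq\geq N$ as well. Since $\gt(\ft_{[2]}(V),\at_0(\Ki,\Li))$ is an EPFGLA, each of its graded summands is finite dimensional by Lemma~\ref{lem-ta-1-5}; having only finitely many nonzero ones, it is finite dimensional, and Theorem~\ref{thm-t-4-4} then gives that $\gt(\Ki,\Li)$ is finite dimensional, which is what we wanted. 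I do not expect a genuine obstacle here: the argument is essentially a bookkeeping of inclusions already set up in the discussion preceding the proposition, and the only step calling for real care is the passage to $\ft_{[2]}(V)$, which is what makes the two ideals comparable in the first place.
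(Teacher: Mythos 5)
Your argument is correct and follows essentially the same route as the paper: the discussion preceding the proposition reduces via Theorem~\ref{thm-t-4-4} to the common ideal $\ft_{[2]}(V)$, uses Lemma~\ref{lem-tan-4-2} to get $\at_0(\Ki,\Li)\subseteq\at_0(\Ki',\Li')$, and then invokes the monotonicity \eqref{inclusion_prolongation} over that fixed ideal to conclude $\at_\pq(\Ki,\Li)\subseteq\at_\pq(\Ki',\Li')$ for all $\pq>0$. Your observation that the reduction to $\ft_{[2]}(V)$ is what makes the two ideals comparable is exactly the point of the paper's argument.
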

\begin{exam}
Let $\mt{(\Ki)}$ be  a FGLA over $\K=\C,\R$ of \textit{depth} $\mu \geq 3$ such that  $\dim\gt_{-1}{=}\dim\gt_{-3}{=}2.$ Then the maximal EPFGLA 
$\gt(\Ki,\Li)$ is finite dimensional for every $\Li\subseteq\mathfrak{gl}
(\gt_{-1})$.

In fact we use Proposition~\ref{comparing}  over $\C$   with a gradation of the Lie algebra 
$\textrm{Lie}(\Gf_2)$
of  the special group $\Gf_2$;  for $\K=\R$ similar results 
can be found using the Lie algebra of the split real form 
of $\mathbf{G}_2$. 
Denoting by $X_{0},X_{1}$ a linear basis of $\gt_{{-}1},$ we get
\begin{align*}
\gt_{-1}&=\langle X_{0},X_1\rangle_{\K}\\
\gt_{-2}&=\langle [X_{0},X_1]\rangle_{\K}\\
\gt_{-3}&=\langle  [[X_{0},X_1],X_0], [[X_{0},X_1],X_1]\rangle_{\K} \\
&\,\,\hdots
\end{align*}
After noticing that the maximal 
effective
prolongation of 
$\ft(\K^{2}){/}\ft_{{[4]}}(\K^{2})$ is finite dimensional and isomorphic to the Lie algebra of $\Gf_{2}$ 
(see e.g. \cite[p.29]{Tan70} or \cite[p.62]{Warhurst2007}), by
using Proposition~\ref{comparing},
we conclude that every EPFGLA $\gt(\Ki,\Li)$ of any type $\Li\,{\subseteq}\,\gl_{2}(\K)$ of 
an $\mt(\Ki)$ with $\Ki{\subseteq}\ft_{[4]}(\K^{2})$
is finite dimensional.
\end{exam}
\section{$\Li$-prolongations 
of graded Lie algebras of the first kind} \label{sec5a}
By Theorem~\ref{thm-t-4-4}, finiteness of the maximal $\Li$-prolongation
of an FGLA $\mt$ of any finite kind is equivalent to that of
the $\Li'$-prolongation of its first kind quotient $V{=}\mt{/}\nt,$ 
for a Lie subalgebra $\Li'$  of $\Li$ that can 
be computed in terms of $\mt$ and $\Li.$ 
It is therefore a key issue to establish a viable criterion for
 FGLA's of the first kind.\par
Using duality, we will translate questions on the maximal effective
prolongations of FGLA's of the first kind 
to questions of commutative algebra for 
 finitely generated modules over polynomial rings.
\par 
\smallskip
Let $V$ be a finite dimensional $\K$-vector space,  
that we will consider as a 
commutative Lie algebra over $\K,$ 
and 
\begin{equation}
\Symm(V^*)={\sum}_{\pq=0}^\infty\Symm_{\pq}(V^*)
\end{equation}   
the $\Z$-graded unitary associative algebra over $\K$ of symmetric multilinear forms on $V.$ 
Its   product  is described on homogeneous forms by
\begin{equation*}\begin{aligned}
 (\xiup\per\etaup)(\vq_1,\hdots,\vq_{\pq+\qq}){=}
 \frac{1}{(\pq{+}\qq)!}\!
 \sum_{\sigmaup\in\Sb_{\pq+\qq}}\xiup(\vq_{\sigmaup_1},\hdots,\vq_{\sigmaup_\pq})
 \cdot\etaup(\vq_{\sigmaup_{\pq+1}},\hdots,\vq_{\sigmaup_{\pq+\qq}}),\quad\\
 \forall \xiup\in\Symm_{\pq}(V^*),\;\forall
 \etaup\in\Symm_{\qq}(V^*),\; \forall
 \vq_1,\hdots,\vq_{\pq+\qq}\in{V}.
 \end{aligned}
\end{equation*}

The duality pairing 
\begin{equation}
V\times{V}^*\ni (\vq,\xiup)\longrightarrow \langle\vq\mid\xiup\rangle
\in\K
\end{equation}
extends to a degree-$({-}1)$-derivation $D_{\vq}$
of $\Symm(V^*),$ 
with 
\begin{equation}\left\{\begin{aligned}
(D_\vq\xiup)(\vq_1,\hdots,\vq_\pq)=(\pq{+}1)
{\cdot}\xiup(\vq,\vq_1,\hdots,\vq_\pq),\;
\qquad\\
\forall \vq,\vq_1,\hdots,\vq_\pq
\in{V},\;\forall\xiup\in\Symm_{\pq+1}(V^*).
\end{aligned}\right.
\end{equation}
\par 
The tensor product 
\begin{equation}\label{equ-3-4}
\Xx(V){=}\Symm(V^*)\otimes{V}{=}\sum_{\pq\geq{-1}}\Xx_{\pq}(V),
\;\;\text{with}\;\; \Xx_{\pq}(V){=}
\Symm_{\pq+1}(V^*)\otimes{V},
\end{equation}
is the maximal $\gl_{\K}(V)$-prolongation of the commutative Lie algebra $\mt=V$, where each vector in $V$
is considered as a homogeneous element of degree~$({-}1).$ We can 
identify $\Xx(V)$ with the space of
vector fields with polynomial coefficients on $V.$ 
The Lie product in $\Xx(V)$ is described, 
on rank one elements, by 
\begin{equation*}
[\xiup{\otimes}\vq,\etaup{\otimes}\wq]{=}(\xiup{\per}(D_{\vq}\etaup))
{\otimes}\wq{-}(\etaup{\per}(D_{\wq}\xiup)){\otimes}\vq,
\forall\xiup{\in}\Symm_{\pq}(V^*),
\forall\etaup{\in}\Symm_{\qq}(V^*),\forall
\vq,\wq\in{V}.
\end{equation*}
\par\smallskip
If $\at_0$ is any Lie subalgebra of $\gl_{\K}(V),$ 
then
the direct  
sum $V\oplus\at_0$ is a Lie subalgebra of 
the Abelian extension $V\oplus\gl_{\K}(V)$ of $\gl_{\K}(V).$
The maximal effective $\at_0$-prolongation of $V$ can be described
as a Lie subalgebra of $\Xx(V).$
  
\begin{prop}  \label{prop-3-3.1}
Let $\at_0$ be any Lie subalgebra of $\gl_{\K}(V)$ and 
 \begin{equation}\label{eq-t-4-14}
\at=V\oplus\at_0\oplus{\sum}_{\pq\geq{1}}\at_{\pq} 
\end{equation}
the 
maximal effective
prolongation
of type $\at_0$ of $V.$ Its
summands of positive degree are
\begin{equation}\label{eq-t-4-15}
\at_{\pq}=\{\xiup{\in}\Xx_{\pq}(V)
\mid
\{V{\ni}\vq\to\xiup(\underset{\pq\;\text{times}}{\underbrace{\wq,\hdots,\wq}},\vq){\in}{V}\}{\in}
\at_0,\;\forall \wq{\in}{V}\}.
\end{equation}
 \end{prop}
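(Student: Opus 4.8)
The plan is to realise both sides of \eqref{eq-t-4-15} as explicit graded subspaces of $\Xx(V)$ and to match them. Recall from the discussion preceding the statement that $\Xx(V)=\Symm(V^{*})\otimes V$ is the maximal $\gl_{\K}(V)$-prolongation of the abelian Lie algebra $V$; since $\at_{0}\subseteq\gl_{\K}(V)$, the maximal effective $\at_{0}$-prolongation $\at$ of $V$ (that is, $\at=\gt(\ft_{[2]}(V),\at_{0})$ in the notation of Theorem~\ref{thm-tan-1-8}) is a $\Z$-graded Lie subalgebra of $\Xx(V)$ with $\at_{-1}=V$ and, by effectiveness together with the type condition, with degree-zero part exactly $\at_{0}$. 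Using the explicit Lie product on $\Xx(V)$ displayed above one checks that, for $\xiup\in\Xx_{\pq}(V)$ and $\wq\in V$, the element $(\ad_{\wq})^{\pq}\xiup\in\Xx_{0}(V)=\gl_{\K}(V)$ is $(\pq{+}1)!$ times the endomorphism $\vq\mapsto\xiup(\wq,\dots,\wq,\vq)$; as $\K$ has characteristic zero the scalar is immaterial, so the condition appearing in \eqref{eq-t-4-15} is equivalent to requiring $(\ad_{\wq})^{\pq}\xiup\in\at_{0}$ for every $\wq\in V$. Write $\at_{\pq}'$ for the right-hand side of \eqref{eq-t-4-15}, and put $\at_{-1}'=V$, $\at_{0}'=\at_{0}$.

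One inclusion is immediate: if $\xiup\in\at_{\pq}$ then, since $\at$ is a $\Z$-graded subalgebra of $\Xx(V)$ with $\at_{-1}=V$, bracketing $\pq$ times with $\wq\in V$ gives $(\ad_{\wq})^{\pq}\xiup\in\at_{0}$, whence $\xiup\in\at_{\pq}'$ by the previous paragraph. For the reverse inclusion I would prove that $\bt:=V\oplus\at_{0}\oplus{\sum}_{\pq\geq1}\at_{\pq}'$ is an effective prolongation of type $\at_{0}$ of $V$: by maximality of $\at$ this forces $\at_{\pq}'\subseteq\at_{\pq}$, and the two inclusions give \eqref{eq-t-4-15}. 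Granting for the moment that $\bt$ is a graded Lie subalgebra of $\Xx(V)$, the rest is formal: $\bt_{<0}=V$ is an FGLA of kind $1$; $\bt$ is effective because $\Xx(V)$ is and $\bt$ contains $\Xx_{-1}(V)=V$; and $\bt$ is of type $\at_{0}$ because its degree-zero part is $\at_{0}$ and, for $A\in\at_{0}\subseteq\Xx_{0}(V)$, the map $\vq\mapsto[A,\vq]$ is $A$ itself.

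So everything comes down to checking that $[\at_{\pq}',\at_{\qq}']\subseteq\at_{\pq+\qq}'$ for all $\pq,\qq\geq-1$, and this is the one point needing care. The tool is the polarization lemma: in characteristic zero, a homogeneous $\gl_{\K}(V)$-valued polynomial map whose values lie in the subspace $\at_{0}$ has its full polarization valued in $\at_{0}$; and since $V$ is abelian the operators $\ad_{\wq}$, $\wq\in V$, pairwise commute on $\Xx(V)$, so the polarization of $\wq\mapsto(\ad_{\wq})^{\pq}\xiup$ is simply $(\wq_{1},\dots,\wq_{\pq})\mapsto\ad_{\wq_{1}}\cdots\ad_{\wq_{\pq}}\xiup$. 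Hence, for $\xiup\in\at_{\pq}'$ and any $\vq\in V$, one obtains $(\ad_{\wq})^{\pq-1}[\xiup,\vq]\in\at_{0}$ for every $\wq$ (this being, up to sign, the value $\ad_{\wq}\cdots\ad_{\wq}\,\ad_{\vq}\,\xiup$ of the polarization of $\wq'\mapsto(\ad_{\wq'})^{\pq}\xiup$), that is $[\at_{\pq}',V]\subseteq\at_{\pq-1}'$, and by iteration $(\ad_{\wq})^{j}$ maps $\at_{\pq}'$ into $\at_{\pq-j}'$. For $\pq,\qq\geq1$ one then expands, via the Leibniz identity $\ad_{\wq}[\xiup,\etaup]=[\ad_{\wq}\xiup,\etaup]+[\xiup,\ad_{\wq}\etaup]$,
\begin{equation*}
(\ad_{\wq})^{\pq+\qq}[\xiup,\etaup]={\sum}_{k}\binom{\pq+\qq}{k}\,[(\ad_{\wq})^{k}\xiup,(\ad_{\wq})^{\pq+\qq-k}\etaup];
\end{equation*}
since $\Xx(V)$ vanishes in degrees $<-1$, only $k\in\{\pq-1,\pq,\pq+1\}$ contribute, and for $k=\pq$ both factors lie in $\at_{0}$ (so their bracket does, $\at_{0}$ being a subalgebra), while for $k=\pq\pm1$ one factor lies in $V$ and the other, by the step just proved, in $\at_{1}'$, so the bracket lies in $[\at_{1}',V]\subseteq\at_{0}$. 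Thus $(\ad_{\wq})^{\pq+\qq}[\xiup,\etaup]\in\at_{0}$ for all $\wq$, i.e.\ $[\xiup,\etaup]\in\at_{\pq+\qq}'$; the brackets involving $\at_{-1}'=V$ or $\at_{0}'=\at_{0}$ are handled in the same way (or directly, using that $V$ is abelian and $\at_{0}$ a subalgebra). This shows $\bt$ is a graded subalgebra of $\Xx(V)$, and the proof is complete.

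The main obstacle is precisely this bracket-closedness of ${\sum}_{\pq}\at_{\pq}'$: it is not apparent from \eqref{eq-t-4-15} and rests on the characteristic-zero polarization argument together with the degree bookkeeping in the Leibniz expansion. Everything else — translating the iterated adjoint into the pointwise evaluation condition, the easy inclusion $\at_{\pq}\subseteq\at_{\pq}'$, and the verification of the prolongation axioms for $\bt$ — is routine.
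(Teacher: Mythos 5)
Your proof is correct, and the key step is the same as the paper's: polarization in characteristic zero to pass between the diagonal condition of \eqref{eq-t-4-15} and the full multilinear condition $\ad_{\wq_1}\cdots\ad_{\wq_\pq}\xiup\in\at_0$. The difference is that the paper simply cites Sternberg and Kobayashi for the multilinear characterisation of $\at_\pq$ and then says ``Formula \eqref{eq-t-4-15} follows by polarization,'' whereas you re-prove the cited fact from scratch: you verify the easy inclusion $\at_\pq\subseteq\at_\pq'$ by bracketing with $V$, and you obtain the reverse inclusion by showing that $V\oplus\at_0\oplus\sum\at_\pq'$ is a graded Lie subalgebra of $\Xx(V)$ (hence an effective prolongation of type $\at_0$, hence contained in the maximal one). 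Your closure argument is sound: the commutativity of the operators $\ad_\wq$ identifies the polarization of $\wq\mapsto(\ad_\wq)^\pq\xiup$ with the iterated adjoint, which gives $[\at_\pq',V]\subseteq\at_{\pq-1}'$, and the Leibniz expansion of $(\ad_\wq)^{\pq+\qq}[\xiup,\etaup]$ collapses to the three terms $k=\pq-1,\pq,\pq+1$ because $\Xx(V)$ vanishes in degrees below $-1$, each landing in $\at_0$. What your route buys is self-containedness — no appeal to the textbook references — at the cost of the extra bookkeeping; the paper's version buys brevity by outsourcing exactly the part you prove in your third paragraph.
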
 \begin{proof} It is known (see e.g. 
 \cite[Ch.VII \S{3}]{Sternberg} or \cite[Ch.1 \S{5}]{Kob})
 that the elements of $\at_{\pq},$ for $\pq{\geq}1,$ are 
 the $\xiup\in\Symm_{\pq{+}1}(V^*)\otimes{V}$ for which 
\begin{equation*}
\{ V\ni\vq\longrightarrow \xiup(\vq_1,\hdots,\vq_{\pq},\vq)\in{V}\}\in\at_0,\;\;\forall \vq_1,\hdots,\vq_{\pq}\in{V}.
\end{equation*}
  Formula \eqref{eq-t-4-15} follows by polarization. 
 \end{proof}
 \begin{rmk} Denote by $C:\Symm_{{\pq}{+}1}(V^*)\otimes{V}\to\Symm_{\pq}(V^*)$ the contraction map.
The Casimir element
 $\cq$ of ${V}^*{\otimes}V$ is the sum ${\sum}_{i{=}1}^n\epi_i{\otimes}\eq_i,$ where $\eq_1,\hdots,\eq_n$ is
 any basis of $V$ and $\epi_1,\hdots,\epi_n$ its dual basis in $V^*.$  We note that 
\begin{equation*}
 C(\xiup{\cdot}\cq)=(\pq{+}1){\cdot}\xiup,\;\;\forall\xiup\in\Symm_{\pq}(V^*),
\end{equation*}
so that the symmetric right product by $(\pq{+}1)^{{-}1}{\cdot}\cq$ is a right inverse of the contraction. 
In particular, 
\begin{equation*}
 \Xx_{\pq}(V)=\Xx'_{\pq}(V)\oplus\Xx''_{\pq}(V),\;\text{with}\; 
\begin{cases}
\Xx'_{\pq}(V)=\ker\left(C:\Symm_{{\pq}{+}1}(V^*){\otimes}{V}{\to}
\Symm_{\pq}(V^*) \right),\\
\Xx''_{\pq}(V)=\{\xiup{\cdot}\cq\mid \xiup{\in}\Symm_{\pq}(V^*)\},
\end{cases}
\end{equation*}
is the decomposition of $\Xx_{\pq}$ into a direct sum of irreducible $\gl_{\K}(V)${-}modules.
 \par
Since the operators $D_{\vq},D_{\wq}$ on $\Symm(V^*)$ commute, 
$$\Xx'(V)=V\oplus\slt_\K(V)\oplus{\sum}_{\pq{\geq}1}\Xx'_{\pq}(V)$$
 is the maximal EPFGLA of type $\slt_{\K}(V)$ of~$V.$ 
 \end{rmk}
\begin{exam}\label{examp-3.3}
Let $V,W$ be finite dimensional vector spaces over $\K$ and 
$\bq{:}V{\times}V{\to}{W}$ 
a non degenerate symmetric bilinear form. \par  
\textsl{If $\at_0$ is the orthogonal Lie algebra 
$\ot_{\bq}(V),$ 
consisting of $X{\in}\gl_{\K}(V)$ such that
\begin{equation*}
\bq(X(\vq_1),\vq_2){+}\bq(\vq_1,X(\vq_2)){=}0, \;\;
\text{for all $\vq_1,\vq_2{\in}V,$}
\end{equation*} 
then $\at_{1}=0.$ }
\begin{proof}
 An element $\xiup{\in}\at_1$ is a map 
 $\xiup{:}{V}{\to}\ot_{\bq}(V)$ such that $\xiup(\vq_1)(\vq_2){=}
 \xiup(\vq_2)(\vq_1)$ for all $\vq_1,\vq_2{\in}{V}.$ 
 Then, for $\vq_1,\vq_2,\vq_3{\in}V,$ we have 
\begin{align*}
 \bq(\xiup(\vq_1)(\vq_2),\vq_3)=\,{-}
 \bq(\vq_2,\xiup(\vq_1)(\vq_3))=\, {-}\bq(\vq_2,\xiup(\vq_3)(\vq_1))
 =  \bq(\xiup(\vq_3)(\vq_2),\vq_1)\\ 
 =  \bq(\xiup(\vq_2)(\vq_3),\vq_1) =  \,{-}\bq(\vq_3,\xiup(\vq_2)(\vq_1)) =
 \, {-}\bq(\xiup(\vq_1)(\vq_2),\vq_3).
\end{align*}
Since we assumed that $\bq$ is non degenerate, this implies that $\xiup(\vq_1)(\vq_2)=0$
for all $\vq_2{\in}V$ and hence that $\xiup(\vq_1){=}0$ for all $\vq_1{\in}V,$ i.e. that $\xiup{=}0.$ 
\end{proof} 
\end{exam}  

 The action 
\begin{equation}
 (\xiup\otimes\vq){\cdot}\wq=
 (D_\wq\xiup)\otimes\vq,\;\;\forall \xiup\in\Symm(V^*),\;\forall
 \vq,\wq\in{V}
\end{equation}
defines on $\Xx(V){=}\Symm(V^*){\otimes}V$ the structure of a $\Z${-}graded right $V${-}module.
Its $\Z$-graded dual  
\begin{equation}
 \Xx^*(V)=\sum_{\pq=-1}^\infty \Xx^*_{-\pq}(V),\;\;\text{with}\;\; \Xx^*_{-\pq}(V)=\Symm_{\pq+1}(V)\otimes{V}^*
\end{equation}
has a natural dual structure of
left $V$-module (see \S\ref{sec-t-1}).
The dual action of $V$ on $\Xx^*(V)$ is left multiplication, which 
extends to left multiplication by elements of
$\Symm(V).$ \par 
By using the right-$V$-module structure of $\Xx(V),$ we can rewrite the summands
$\at_{\pq}$ in \eqref{eq-t-4-14} by
\begin{equation*}
 \at_{\pq}=\{\xiup\in\Xx_{\pq}(V)
 \mid \xiup{\cdot}\vq_1{\cdots}
 \vq_\pq{\in}\at_0,\;\forall\vq_1,\hdots,\vq_{\pq}
 {\in}V\}.
\end{equation*}
Since $\at$ 
is a right-$V$-submodule
of $\Xx(V),$ its graded
dual $\at^*$ is the 
quotient of the left $V$-module $\Xx^*(V)$ 
by the annihilator $\Mi$ 
of $\at$ in $\Xx^*(V).$  \par 
The duality pairing of $V$ and $V^*$ makes
$\gl_{\K}(V)=V{\otimes}V^*$ self-dual, its pairing 
being defined 
by the trace form 
\begin{equation*}
 \langle{X}\,|\,{Y}\rangle=\trac(X{\cdot}Y),
 \;\;\forall X,Y\in\gl_{\K}(V).
\end{equation*}
Let
\begin{equation}
 \at_0^0=\{X\in\gl_{\K}(V)\mid 
 \trac(X{\cdot}A)=0,\;\forall A\in\at_0\}
\end{equation}
be the annihilator
of $\at_0$ in $\gl_{\K}(V).$ 
Then the annihilator $\Mi$ of $\at$ in $\Xx^*(V)$ 
is  the graded left-$V$-module
\begin{equation}
 \Mi=\Symm(V){\cdot}\at_0^0\subseteq\Xx^*(V).
\end{equation}
\begin{prop}\label{prop-tan-3-3}
 The dual of the maximal effective $\at_0$-prolongation 
 $\at$ of $V$ 
 is the quotient module 
\begin{equation}\vspace{-20pt}
 \at^*=\Xx^*(V){/}\Mi.
\end{equation} \qed
\end{prop}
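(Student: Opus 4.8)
The proposition is really the conjunction of two things: the formal statement that $\at^*$ is a cokernel (obtained by dualizing the inclusion $\at\hookrightarrow\Xx(V)$), and the concrete identification of the annihilator of $\at$ in $\Xx^*(V)$ with $\Mi=\Symm(V){\cdot}\at_0^0$. My plan is to dispose of the first point by general facts on graded duals and then reduce the second to a degree-by-degree computation that feeds on Proposition~\ref{prop-3-3.1}.

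For the formal point: $\at$ is a graded right-$V$-submodule of $\Xx(V)$ and each $\Xx_\pq(V)$ is finite dimensional, so restriction of functionals gives a surjective map $\Xx^*(V)\to\at^*$ whose kernel is the orthogonal $\at^\perp=\{\etaup\in\Xx^*(V)\mid\langle\xiup\mid\etaup\rangle=0\ \forall\,\xiup\in\at\}$. From the adjointness relation $\langle\xiup{\cdot}\vq\mid\etaup\rangle=\langle\xiup\mid\vq{\cdot}\etaup\rangle$ one reads off that $\at^\perp$ is a graded left-$V$-submodule of $\Xx^*(V)$ and that the above surjection intertwines the dual left actions of \S\ref{sec-t-1}; hence $\at^*\cong\Xx^*(V){/}\at^\perp$ as graded left $V$-modules, and everything is reduced to proving $\at^\perp=\Mi$.

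Now compare homogeneous components. Since $\at_{-1}=V=\Xx_{-1}(V)$, the component of $\at^\perp$ in degree $+1$ is zero, and so is that of $\Mi$ (which is generated in degree $0$ and lives in nonpositive degrees). In degree $0$ one identifies $\Xx_0(V)=\Xx^*_0(V)=\gl_\K(V)$ and checks that the induced pairing is the trace form $\langle X\mid Y\rangle=\trac(XY)$; then the degree-$0$ part of $\at^\perp$ is precisely $\at_0^0$, i.e. the degree-$0$ part of $\Mi$. For $\pq\geq1$ I claim that the annihilator of $\at_\pq$ in $\Xx^*_{-\pq}(V)$ is $\Symm_\pq(V){\cdot}\at_0^0$. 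The inclusion ``$\supseteq$'' is immediate: for $\vq_1{\cdots}\vq_\pq\in\Symm_\pq(V)$, $\etaup_0\in\at_0^0$ and $\xiup\in\at_\pq$, iterating adjointness of the $\Symm(V)$-actions gives $\langle\xiup\mid(\vq_1{\cdots}\vq_\pq){\cdot}\etaup_0\rangle=\langle\xiup{\cdot}\vq_1{\cdots}\vq_\pq\mid\etaup_0\rangle$, which vanishes because $\xiup{\cdot}\vq_1{\cdots}\vq_\pq\in\at_0$ by Proposition~\ref{prop-3-3.1} while $\etaup_0\in\at_0^0$.

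For the opposite inclusion it is cleaner to compute, in the finite-dimensional pairing between $\Xx_\pq(V)$ and $\Xx^*_{-\pq}(V)$, the annihilator of $\Symm_\pq(V){\cdot}\at_0^0$. An element $\xiup\in\Xx_\pq(V)$ annihilates it iff $\langle\xiup{\cdot}\wq\mid\etaup_0\rangle=0$ for all $\wq\in\Symm_\pq(V)$ and $\etaup_0\in\at_0^0$, i.e. iff $\xiup{\cdot}\wq$ lies in the double annihilator of $\at_0$ in $\gl_\K(V)$, which in finite dimensions is $\at_0$ itself, for every $\wq$; since the products $\vq_1{\cdots}\vq_\pq$ span $\Symm_\pq(V)$, this is exactly the condition of Proposition~\ref{prop-3-3.1} characterising $\at_\pq$. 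Hence $\at_\pq$ is the annihilator of $\Symm_\pq(V){\cdot}\at_0^0$, and by biduality in finite dimensions $\Symm_\pq(V){\cdot}\at_0^0$ is the annihilator of $\at_\pq$, proving the claim. Summing over all degrees gives $\at^\perp=\Mi$, whence $\at^*=\Xx^*(V){/}\Mi$. The only real care required is in bookkeeping the grading and the interplay of the left and right $V$-actions; the single substantive input is the description of $\at_\pq$ from Proposition~\ref{prop-3-3.1}, and I do not anticipate any genuine obstacle beyond that.
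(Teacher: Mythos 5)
Your proof is correct and follows the same route the paper takes: the paper states Proposition~\ref{prop-tan-3-3} without a detailed proof, asserting in the surrounding text exactly the two facts you verify, namely that the graded dual of the submodule $\at\subseteq\Xx(V)$ is $\Xx^*(V)$ modulo the annihilator of $\at$, and that this annihilator equals $\Symm(V){\cdot}\at_0^0$. Your degree-by-degree computation via the characterisation $\at_{\pq}=\{\xiup\mid \xiup{\cdot}\vq_1\cdots\vq_{\pq}\in\at_0\}$ and finite-dimensional biduality is a sound filling-in of the details the authors leave implicit.
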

\begin{prop} Let $V$ be a finite dimensional $\K$-vector space and 
$\at_0$ a Lie subalgebra of $\gl_{\K}(V).$ Then the following are equivalent:
\begin{enumerate}
\item the maximal effective $\at_0$-prolongation of $V$ is finite
dimensional;
\item the $\Symm(V)$-module $\at^*$ is $(V)$-coprimary;
\item $\Symm_h(V){\per}\at_0^0=\Symm_{h{+}1}(V)\otimes{V}^*,\;\;\text{
for some $h{\geq}0.$}$ 
\end{enumerate}
\end{prop}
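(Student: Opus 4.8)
The plan is to express all three conditions through the finitely generated graded $\Symm(V)$-module $\at^{*}$ and then apply the duality results of \S\ref{sec-t-1}. First I would observe that $\at$, regarded as a $\Z$-graded right $V$-module under $(\xiup\otimes\vq)\cdot\wq = (D_{\wq}\xiup)\otimes\vq$ (so that $\eq\cdot\vq = [\vq,\eq]$ on the prolongation), satisfies condition \eqref{cndC}: the homogeneous summands $\at_{\pq}$ are finite dimensional by Lemma~\ref{lem-ta-1-5}, we have $\at_{\pq}=\{0\}$ for $\pq<-1$, and the last clause of \eqref{cndC} is precisely the effectiveness of $\at_{{\geq}0}$. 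Hence Proposition~\ref{propt.1.2} applies with $E=\at$ and yields immediately the equivalence of $(1)$ (that $\at$ is finite dimensional) with ``$\at^{*}$ is finite dimensional'' and with ``$\at^{*}$ is $(V)$-coprimary'', i.e. with $(2)$.

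Next I would unwind $(3)$ using Proposition~\ref{prop-tan-3-3}: there $\at^{*}=\Xx^{*}(V)/\Mi$ with $\Mi=\Symm(V)\cdot\at_0^{0}$, and since the left $\Symm(V)$-action on $\Xx^{*}(V)=\Symm(V)\otimes V^{*}$ is multiplication in the first factor while $\at_0^{0}\subseteq\gl_{\K}(V)=\Xx^{*}_0(V)$, the homogeneous component of $\Mi$ inside $\Xx^{*}_{-\pq}(V)=\Symm_{\pq+1}(V)\otimes V^{*}$ is exactly $\Symm_{\pq}(V)\cdot\at_0^{0}$. Therefore $\at^{*}_{-\pq}\cong\bigl(\Symm_{\pq+1}(V)\otimes V^{*}\bigr)/\bigl(\Symm_{\pq}(V)\cdot\at_0^{0}\bigr)$, so that $(3)$ asserts exactly that $\at^{*}_{-h}=\{0\}$ for some $h\geq 0$. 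The key remark is that $\Mi$ is generated over $\Symm(V)$ by its single degree-$0$ piece $\at_0^{0}$; hence, multiplying by $V$, once $\Symm_h(V)\cdot\at_0^{0}=\Symm_{h+1}(V)\otimes V^{*}$ holds it holds with $h$ replaced by every $h'\geq h$. Since moreover $\at^{*}_1=V^{*}$ and $\at^{*}_0\cong\at_0^{*}$ are finite dimensional, this shows that $(3)$ is equivalent to $\at^{*}$ being finite dimensional, hence (again by Proposition~\ref{propt.1.2}) to $(1)$. Combining $(1)\Leftrightarrow(2)$ and $(1)\Leftrightarrow(3)$ finishes the proof.

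The argument is essentially bookkeeping once Propositions~\ref{propt.1.2} and~\ref{prop-tan-3-3} are in hand; the part requiring care, and the only place I expect a genuine obstacle, is the translation of degrees between the right $V$-module $\at$ and the left $\Symm(V)$-module $\at^{*}$, together with the observation that $\Mi$ is generated in a single degree — it is this last fact that converts the ``for some $h$'' in $(3)$ into the vanishing of all sufficiently negative graded components of $\at^{*}$, equivalently into $(V)$-coprimarity.
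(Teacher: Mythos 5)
Your argument is correct and follows essentially the same route as the paper: both reduce everything to the graded $\Symm(V)$-module $\at^{*}$, use the equivalence ``finite dimensional $\Leftrightarrow$ $(V)$-coprimary'' for a finitely generated graded module (Proposition~\ref{propt.1.2}), and read off $(3)$ from the presentation $\at^{*}=\Xx^{*}(V)/\Symm(V){\cdot}\at_0^0$ of Proposition~\ref{prop-tan-3-3}. The only difference is one of detail: the paper's proof is terse and simply asserts the equivalence of $(3)$ with finite dimensionality, whereas you correctly supply the degree bookkeeping and the observation that $\Mi$ is generated in a single degree, which makes the ``for some $h$'' clause propagate to all larger $h$.
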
 \begin{proof}
We recall that $\at^*$ is said to be coprimary if, for each $\sq{\in}\Symm(V),$ the homothety
$\{\at^*{\ni}\,\etaup{\to}\sq{\cdot}\etaup\,{\in}\at^*\}$ is either nilpotent or injective. If this is the case,
the radical $\sqrt{\Ann(\at^*)}$ of the ideal $\Ann(\at^*){=}\{\sq{\in}\Symm(V)\,{|}\, \sq{\cdot}\at^*{=}\{0\}\}$  
is a prime ideal in $\Symm(V).$ [See e.g. \cite[p.8]{toug1972}.]
\par 
Being $\Z$-graded, $\at^*$ is finite dimensional if and only if all $\vq{\in}V$ define nilpotent homotheties
on $\at_0^*.$ This shows that (1) and (2) are equivalent. Finally, (3) is equivalent to the fact that 
$\at_0^*,$ and hence $\at,$ is finite dimensional.
\end{proof}

The advantage of using duality is to reduce the question 
about the finite dimensionality of the
maximal prolongation to an exercise 
on finitely generated modules over the ring of polynomials
with coefficients in $\K$ 
and eventually to linear algebra. 
\par\smallskip
Having fixed a basis $\xiup_1,\hdots,\xiup_n$ of $V^*$ 
we can identify $\Xx^*(V)$ to $\Az^n.$ 
Each element $X$ of $\at_0^0$ can be viewed as a column vector $X\vq$ of $\Az^n,$
whose entries are first degree polynomials in $V.$ By taking a set $X_1,\hdots,X_m$ 
of generators of $\at_0^0$ we obtain a matrix of homogeneous first degree polynomials 
\begin{equation}\label{eq-3.12}
 M(\vq)=(X_1\vq,\hdots,X_m\vq) \in V^{\, n{\times}m}\subset{\Az}^{n{\times}m},
\end{equation}
that we can use to give a finite type presentation of $\at^*$: 
\begin{equation} \label{eq-3.13}
\begin{CD}
 \Az^m @>{M(\vq)}>> \Az^n @>>> \at^* @>>>0.
\end{CD}
\end{equation}
\par\smallskip
\begin{thm}\label{thm-t-5-5} 
 Let $\Ji_0(M)$ be the ideal generated by the order  
 $n$ minor determinants of $M(\vq).$
 A necessary and sufficient condition for $\at$ 
 to be finite dimensional is that
 $\sqrt{\Ji_0(M)}=(V).$ 
\end{thm}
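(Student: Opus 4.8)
The plan is to pass to the graded dual $\at^{*}$, which by \eqref{eq-3.13} is the finitely generated graded $\Az$-module presented by the matrix $M(\vq)$, and to match finiteness of $\at$ with a property of the set of primes in $\mathrm{Supp}(\at^{*})$, a set that can be read off from the order-$n$ minors of $M(\vq)$.

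First I would record that $\at$, with its right $V$-module structure, satisfies condition~\eqref{cndC} — effectiveness of the prolongation is precisely its third clause — so that Proposition~\ref{propt.1.2} applies: $\at$ is finite dimensional if and only if $\at^{*}$ is, equivalently if and only if every $\vq\in V$ acts nilpotently on $\at^{*}$, that is, $V\subseteq\sqrt{\Ann(\at^{*})}$. Since $\at\supseteq V\neq\{0\}$ and the homogeneous components of $\at$ are finite dimensional, $\at^{*}\neq\{0\}$, so $\Ann(\at^{*})$ is a proper ideal; being $(V)$ maximal, the inclusion $V\subseteq\sqrt{\Ann(\at^{*})}$ is then equivalent to $\sqrt{\Ann(\at^{*})}=(V)$. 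Hence $\at$ is finite dimensional if and only if $\sqrt{\Ann(\at^{*})}=(V)$.

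Then I would show $\sqrt{\Ann(\at^{*})}=\sqrt{\Ji_0(M)}$. As both are intersections of the primes containing them and, $\at^{*}$ being finitely generated, $\sqrt{\Ann(\at^{*})}=\bigcap_{\pri\in\mathrm{Supp}(\at^{*})}\pri$, it suffices to check that $\mathrm{Supp}(\at^{*})$ consists exactly of the primes $\pri$ of $\Az$ that contain $\Ji_0(M)$. Fix such a $\pri$, with residue field $\kappa(\pri)$: tensoring the presentation \eqref{eq-3.13} over $\Az$ with $\kappa(\pri)$ and using right exactness gives $\at^{*}\otimes_{\Az}\kappa(\pri)\cong\mathrm{coker}\bigl(M(\pri)\colon\kappa(\pri)^{m}\to\kappa(\pri)^{n}\bigr)$, which is nonzero exactly when $\rank_{\kappa(\pri)}M(\pri)<n$, that is when every order-$n$ minor of $M(\vq)$ lies in $\pri$, i.e.\ when $\pri\supseteq\Ji_0(M)$; by Nakayama's lemma, $\at^{*}\otimes_{\Az}\kappa(\pri)\neq0$ is equivalent to $\at^{*}_{\pri}\neq0$, that is to $\pri\in\mathrm{Supp}(\at^{*})$. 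Thus $\sqrt{\Ann(\at^{*})}=\sqrt{\Ji_0(M)}$, and together with the previous step this gives that $\at$ is finite dimensional if and only if $\sqrt{\Ji_0(M)}=(V)$. (When $m<n$ there are no order-$n$ minors, so $\Ji_0(M)=(0)$ and $\sqrt{\Ji_0(M)}\neq(V)$, in agreement with the fact that $\at^{*}$, being then the cokernel of a map out of a free module of strictly smaller rank, has positive rank and is infinite dimensional.)

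I do not expect a genuine obstacle: the substance of the theorem is the reduction, carried out in the preceding pages, from the Lie algebra $\at$ to the module $\at^{*}$ and its explicit presentation \eqref{eq-3.13}, after which this is the classical description of the support of a finitely presented module by its zeroth Fitting ideal. The two points deserving care are the bidirectionality of the final equivalence — which uses $\at^{*}\neq\{0\}$, so that $(V)\in\mathrm{Supp}(\at^{*})$, together with the boundedness of the grading of $\at^{*}$ that makes Proposition~\ref{propt.1.2} applicable — and the remark that the entries of $M(\vq)$ are homogeneous of degree one, so $\Ji_0(M)$ is a homogeneous ideal and $\sqrt{\Ji_0(M)}=(V)$ says precisely that its minors vanish only at the origin, i.e.\ the ellipticity condition of the introduction.
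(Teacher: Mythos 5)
Your argument is correct and follows the same route as the paper: reduce via Proposition~\ref{propt.1.2} to the statement that $\at$ is finite dimensional iff $\at^{*}$ is $(V)$-coprimary, and then identify $\sqrt{\Ji_0(M)}$ with $\sqrt{\Ann(\at^{*})}$. The only difference is that the paper simply cites \cite{toug1972} for the equality of these radicals, whereas you prove it directly by computing $\mathrm{Supp}(\at^{*})$ from the presentation \eqref{eq-3.13} via right exactness and Nakayama; this is a correct and complete substitute for the citation.
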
 
\begin{proof}
 Indeed, the ideals $\Ji_0(M)$ and  
 $\Ann(\at^*)=\{f{\in}\Az\mid f{\cdot}\at^*{=}\{0\}\}$
 have the same radical
 (see e.g. \cite[ch.{I},\S{2}]{toug1972}).
\end{proof}
Let $\Fb$ be the algebraic closure of the ground field $\K$ 
and set $V_{(\Fb)}{=}\Fb{\otimes}_{\K}V.$ By taking the tensor product by $\Fb$
we deduce from \eqref{eq-3.13}
the exact sequence 
\begin{equation} \label{eq-3.14}
\begin{CD}
 \Symm(V_{(\Fb)})^m @>{M(\zq)}>> \Symm(V_{(\Fb)}) @>>> \Fb\,{\otimes}_{\K}\at^* @>>>0.
\end{CD}
\end{equation}
We observe that $\at^*$ is $(V)$-coprimary if and only if $\Fb\,{\otimes}_{\K}\!\at^*$ is
$(V_{(\Fb)})$-coprimary. Therefore
Theorem~\eqref{thm-t-5-5} translates into 
\begin{thm}\label{thm-t-4-6}
 A necessary and sufficient condition for the maximal prolongation
 $\at$ to be finite dimensional is that 
\begin{equation}
 \rank(M(\zq))=n=\dim(V),\;\;\forall \zq\in{V}_{(\Fb)}{\setminus}\{0\}.
\end{equation}
\end{thm}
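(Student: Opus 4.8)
The plan is to translate Theorem~\ref{thm-t-5-5} from a statement about the radical of the Fitting-type ideal $\Ji_0(M)$ over $\Az=\Symm(V)$ into a pointwise rank condition over the algebraically closed field $\Fb$. The bridge is the Nullstellensatz together with the observation already recorded in the text that $(V)$-coprimariness of $\at^*$ is insensitive to the base change $\K\rightsquigarrow\Fb$, so that the exact sequence \eqref{eq-3.14} may be used in place of \eqref{eq-3.13}.

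First I would record the equivalences we already have: by Theorem~\ref{thm-t-5-5}, $\at$ is finite dimensional iff $\sqrt{\Ji_0(M)}=(V)$; and since forming the ideal of order-$n$ minors commutes with the flat base change $-\otimes_{\K}\Fb$, this is equivalent to $\sqrt{\Ji_0(M)\cdot\Symm(V_{(\Fb)})}=(V_{(\Fb)})$. By Hilbert's Nullstellensatz over the algebraically closed field $\Fb$, an ideal $\mathfrak{a}\subseteq\Symm(V_{(\Fb)})$ has radical equal to the maximal graded ideal $(V_{(\Fb)})$ precisely when its zero locus in $V_{(\Fb)}$ is $\{0\}$ (the ideal $\Ji_0(M)$ is homogeneous, so its variety is a cone and is either $\{0\}$ or positive-dimensional). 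Thus finite dimensionality of $\at$ is equivalent to: the only $\zq\in V_{(\Fb)}$ at which every order-$n$ minor of $M(\zq)$ vanishes is $\zq=0$.

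Next I would unwind what "every order-$n$ minor of the $n\times m$ matrix $M(\zq)$ vanishes" means at a fixed point $\zq\in V_{(\Fb)}$: by the standard rank characterization via minors, this says exactly $\rank(M(\zq))<n$. Hence the zero locus of $\Ji_0(M)$ in $V_{(\Fb)}$ equals $\{\zq\in V_{(\Fb)}\mid \rank(M(\zq))<n\}$. Since $\rank(M(0))=0<n$ always (the entries are homogeneous of positive degree), this set always contains $0$; it equals $\{0\}$ iff $\rank(M(\zq))=n$ for all $\zq\in V_{(\Fb)}\setminus\{0\}$. Combining with the previous paragraph gives precisely the asserted criterion
\begin{equation*}
 \rank(M(\zq))=n=\dim(V),\qquad\forall\,\zq\in V_{(\Fb)}\setminus\{0\}.
\end{equation*}

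**The main obstacle** is not any single deep step but rather making the base-change bookkeeping airtight: one must be sure that the ideal of maximal minors of $M$ behaves correctly under $-\otimes_{\K}\Fb$ (it does, since the $n\times n$ minors are universal polynomial expressions in the matrix entries, and $\Symm(V)\to\Symm(V_{(\Fb)})$ is faithfully flat), and that passing to the algebraic closure genuinely does not change whether $\at^*$ is $(V)$-coprimary — a point the paper states just before Theorem~\ref{thm-t-4-6}, and which itself rests on the fact that the associated primes of $\Fb\otimes_{\K}\at^*$ lie over those of $\at^*$ and that $(V)$ is the only graded maximal ideal on each side. Granting those standard commutative-algebra facts, the proof is the short chain Theorem~\ref{thm-t-5-5} $\Rightarrow$ base change $\Rightarrow$ Nullstellensatz $\Rightarrow$ minors-vs-rank, which is exactly what the paper supplies.
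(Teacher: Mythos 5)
Your proof is correct and follows essentially the same route as the paper: Theorem~\ref{thm-t-5-5}, the base-change observation that $\at^*$ is $(V)$-coprimary iff $\Fb\otimes_{\K}\at^*$ is $(V_{(\Fb)})$-coprimary, the Nullstellensatz applied to the homogeneous ideal of order-$n$ minors, and the identification of its zero locus with the points where $\rank(M(\zq))<n$. The extra care you take with flat base change and the cone structure of the variety only makes explicit what the paper leaves implicit.
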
 
\begin{proof}
 In fact, since $\Fb$ is algebraically closed, 
 by the Nullstellensatz (see e.g. \cite{Hart})
 the necessary and sufficient condition
 for an ideal $\Ji$ of $\Symm(V_{(\Fb)})$ to have $\sqrt{\Ji}{=}(V_{(\Fb)})$ 
 is that 
$\{\zq{\in}V_{(\Fb)}{\mid} \, f(\zq){=}0,\;\forall f\in\Ji\}{=}\{0\}.$ 
\end{proof}
\begin{exam} \label{ex-tan-3-6}
Denote by
$\mathfrak{co}(n,\K){=}\{A{\in}\gl_n(\K)\,{\mid}\, 
A^\intercal{+}A{\in}\K{\cdot}\Id_n\}$  
the Lie algebra of conformal transformations of $\K^n.$ 
\paragraph{(1)} 
 Let $\at_0=\mathfrak{co}(2,\K).$  Its orthogonal in $\gl_2(\K)$ 
 consists of the traceless
 symmetric matrices. Take the basis consisting of 
\begin{equation*}
 M_1= 
\begin{pmatrix}
 1 & 0\\
 0 & {-}1
\end{pmatrix},\;\; M_2= 
\begin{pmatrix}
 0 & 1\\
 1 & 0
\end{pmatrix}.
\end{equation*}
We obtain 
\begin{equation*}
 M(\zq)= 
\begin{pmatrix}
 \zq_1 & \zq_2\\
 {-}\zq_2 & \zq_1
\end{pmatrix},\;\; \text{with}\;\; \det(M(\zq))=\zq_1^2{+}\zq_2^2.
\end{equation*}
Since the equation $\zq_1^2{+}\zq_2^2{=}0$ has non zero solutions in $\Fb^2,$ 
by Theorem~\ref{thm-t-4-6},
$\K^2$ has an infinite dimensional
effective $\mathfrak{co}(2,\K)$-prolongation. 
\paragraph{(2)}
Let us consider next the case $n{>}2.$ The orthogonal $\at_0^0$ 
of $\at_0{=}\mathfrak{co}(n,\K)$ consists of the
traceless $n{\times}n$ symmetric matices. As a basis of $\at_{0}^0$ we can take the matrices\par
\centerline{$\Delta_h{=}(\deltaup_{1,i}\deltaup_{1,j}{-}\deltaup_{h,i}\deltaup_{h,j}),$  $(h{=}2,\hdots,n)$ 
and $T_{h,k}{=}(\deltaup_{h,i}\deltaup_{k,j}{+}\deltaup_{k,i}\deltaup_{h,j}),$  $(1{\leq}{h}{<}k{\leq}n).$}
\par
Accordingly, we get
\begin{equation*}
 M(\zq)= 
\begin{pmatrix}
\zq_1 & \zq_1 & \hdots & \zq_1 & \zq_2 & \zq_3 & \hdots & \zq_n & 0 & \hdots \\
{-}\zq_2 & 0 & \hdots & 0 & \zq_1 & 0 & \hdots & 0 & \zq_3 & \hdots \\
0 & {-}\zq_3 & \hdots & 0 & 0 & \zq_1 & \hdots & 0 & \zq_2 & \hdots \\
\vdots & \vdots & \ddots & \vdots & \vdots & \vdots & \ddots & \vdots&\vdots & \hdots \\
0 & 0 & \hdots &{-}\zq_n & 0 & 0 & \hdots & \zq_1 & 0 & \hdots 
\end{pmatrix}.
\end{equation*}
We want to show that $M(\zq)$ has rank $n$ when $\zq{\in}\Fb^n{\setminus}\{0\}.$ 
The minor of the 
$(j{-}1)$-st, $n$-th, $\hdots,$ $(n{-}2)$-nd columns
is  
$(\zq_1^2{+}\zq_j^2){\cdot}\zq_1^{n{-}2},$ for $j{=}2,\hdots,n.$  
Likewise,  
we can show that the ideal of order $n$ minor determinants
of $M(\zq)$  
contains all polynomials $(\zq_j^2{+}\zq_h^2){\cdot}\zq_j^{n{-}2}$
for $1{\leq}j{\neq}h{\leq}n.$ Denote by ${\pm}\iq$ the roots of $({-}1)$ in $\Fb$ and assume by contradiction
that $M(\zq)$ has rank ${<}n$ for some $\zq{\neq}0.$ We can assume that $\zq_1{\neq}0.$
This yields $\zq_{j}\,{=}{\pm}\,\iq\,\zq_1$ for $j{=}2,\hdots,n.$ But then 
\begin{equation*}
 (\zq_2^2{+}\zq_3^2)\cdot\zq_2^{n{-}2}=-2(\pm\iq)^{n{-}2}\zq_1^n\neq{0}.
\end{equation*}
This contradiction 
proves that $M(\zq)$ has rank $n$ for all $\zq{\in}\Fb^n{\setminus}\{0\},$ showing, 
by Theorem~\ref{thm-t-4-6}, that the maximal EPFGLA of type
$\mathfrak{co}(n,\K)$ of $\K^n$ is finite dimensional if $n{\geq}3.$ 
\end{exam}
\begin{exam} Let us fix integers $0{<}\pq{\leq}\qq$ and set $\Bt=\left.\left\{ 
\begin{pmatrix}
 0 & B^{\intercal}\\
 B & 0
\end{pmatrix}\,\right|  B{\in}\K^{\pq{\times}\qq}\right\}.
$ 
Set $n{=}\pq{+}\qq$ and let
\begin{equation*}
 \at_0=\{X\in\gl_{\K}(n)\mid X^\intercal B{+}B\,X\in\Bt,\;\forall{B}\in\Bt\}
\end{equation*}
be the Lie algebra of $\Bt$-conformal tranformations of $\K^n.$ We have 
\begin{equation*}
 \at_0{=}\left.\left\{ 
\begin{pmatrix}
 X & 0\\
 0 & Y 
\end{pmatrix}\,\right| X{\in}\gl_{\K}(\qq),Y{\in}\gl_{\K}(\pq)\right\}\;\text{and hence}\;
\at_0^0= \left.\left\{ 
\begin{pmatrix}
0 & E\\
F^{\intercal} & 0
\end{pmatrix}\right| E,F{\in}\K^{\qq{\times}\pq}\right\}.
\end{equation*}
By taking the canonical basis of $\K^{\qq{\times}\pq}$ we obtain 
\begin{equation*}
 M(\zq)= 
\begin{pmatrix}
 \zq_{\qq{+}1}\Id_{\qq} & \hdots & \zq_n\Id_{\qq} & 0 & \hdots & 0 \\
 0 & \hdots& 0 & \zq_1\Id_{\pq} & \hdots & \zq_{\qq}\Id_{\pq}
\end{pmatrix}.
\end{equation*}
Clearly both $(\zq_1,\hdots,\zq_{\pq})$ and $(\zq_{\pq{+}1},\hdots,\zq_n)$ are associated ideals
of $\Ji(M)$ and hence the maximal effective $\at_0$-prolongation of $V$ is infinite dimensional.
Note that, if we take  the $\Bt$-orthogonal Lie algebra 
\begin{equation*}
 \ot_{\Bt}=\{X\in\gl_{\K}(n)\mid X^\intercal{B}+B\,X=0,\;\forall B{\in}\Bt\},
\end{equation*}
 the maximal $\ot_{\Bt}$-prolongation of $\K^n$ is finite dimensional by Example~\ref{examp-3.3},
because, if $B_1,\hdots,B_{\pq\qq}$ is a basis of the $\K$-vector space $\Bt,$ the symmetric
bilinear form \;
$\K^n{\times} \K^n{\ni}(\vq_1,\vq_2)\to(\vq_1^{\intercal}B_i\vq_2)_{1{\leq}i{\leq}\pq\qq}
\in\K^{\pq\qq}$ \; is nondegenerate. 
\end{exam}
\begin{exam}\label{ex-tan-3.9}
 Let $V=\K^{2n}$ and 
\begin{align*}
 \at_0&=\spt(n,\K)=\{A\in\gl_{2n}(\K)\mid A^\intercal\Omega+\Omega{A}=0\},\;\;\text{with}\;\;
 \Omega= 
\begin{pmatrix}
 0 & \Id_n\\
 {-}\Id_n& 0
\end{pmatrix},\\
&=\left.\left\{ 
\begin{pmatrix}
 A & B \\
 C & -A^\intercal
\end{pmatrix}\right| A,B,C\in\gl_{n}(\K),\; B^\intercal{=}{B},\; C^\intercal{=}C\right\}.
\end{align*}
Then 
\begin{equation*}
 \at_0^0=\left.\left\{ 
\begin{pmatrix}
 A & B \\
 C & A^\intercal
\end{pmatrix}\right| A,B,C\in\gl_n(\K),\; B^{\intercal}{=}{-}B,\; ,C^\intercal{=}{-}C\right\}.
\end{equation*}
Take any basis $A_1,\hdots,A_h$ of $\gl_n(\K)$ (with $h{=}n^2$) and 
$B_1,\hdots,B_k$ of $\ot(n,\K)$ (with $k{=}\tfrac{1}{2}n(n{-}1)$). Then we have,
for $\zq,\wq\in\K^n,$
\begin{equation*}
 M(\zq,\wq)= 
\begin{pmatrix}
 A_1\zq & \hdots & A_h\zq & B_1\wq & \hdots & B_k\wq & 0 & \hdots & 0\\
 A_1^\intercal\wq & \hdots & A_h^\intercal\wq & 0 & \hdots & 0 & B_1\zq & \hdots & B_k\zq 
\end{pmatrix}.
\end{equation*}
If we take e.g. $\wq{=}0,$ we see that, for $\zq_0{\neq}0,$ 
$M(\zq_0,0)$ has rank $(2n{-}1),$ because $B_1\zq_0,\hdots,B_k\zq_0$
span the \textit{orthogonal} hyperplane $\zq_0^\perp{=}\{\zq{\in}\K^n{\mid}\,\zq^\intercal\zq_0{=}0\}$
to $\zq_0$ 
in $\K^n$. 
By Theorem~\ref{thm-t-4-6} this implies that the maximal
$\spt(n,\K)$-pro\-lon\-ga\-tion of $\K^{2n}$ is infinite dimensional.
\end{exam}
The criterion of Theorem~\ref{thm-t-4-6} 
can also be expressed, in an equivalent way,
as an \textit{ellipticity condition}.
In the case
when, given an $\mt{=}{\sum}_{0{<}\pq{\leq}\muup}{\gt}_{-\pq},$
we take $\Li{=}\gl_{\K}(V)$ and hence
$\at_{0}=\{\xiup\,{\in}\,\gl_{\K}(V)
\,{\mid}\,[\xiup,\gt_{-\pq}]\,{=}\,0,\;\text{for}\;\pq{>}1\},$ it 
was already
considered by many authors: see e.g. 
\cite{GQS1966,Spencer1969} and \cite{Ottazzi2011} for 
a thorough discussion.

\begin{thm}\label{thm-3.11}
 Let $V$ be a finite dimensional $\K$-vector space, $\at_0$  a Lie subalgebra of $\gl_{\K}(V)$ 
and $\Fb$ the algebraic closure of $\K.$ \par
The maximal EPFGLA of type $\at_0$ of $V$ is infinite dimensional if and only if
$\Fb{\otimes}\at_0$ contains an element of rank one on $V_{(\Fb)}.$ 
\end{thm}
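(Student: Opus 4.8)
The plan is to derive the statement directly from Theorem~\ref{thm-t-4-6}, using the self-duality of $\gl_{\K}(V)$ under the trace form. Fix a basis $X_1,\dots,X_m$ of the trace-orthogonal complement $\at_0^0$ of $\at_0$ in $\gl_{\K}(V)$, so that, by \eqref{eq-3.12}, $M(\zq)=(X_1\zq,\dots,X_m\zq)$ and, for $\zq\in V_{(\Fb)}$, the column span of $M(\zq)$ is the subspace $\{X\zq\mid X\in\Fb{\otimes}\at_0^0\}$ of $V_{(\Fb)}$; in particular $\rank M(\zq)$ does not depend on the chosen basis. By Theorem~\ref{thm-t-4-6} the maximal prolongation is infinite dimensional exactly when there is $\zq\in V_{(\Fb)}{\setminus}\{0\}$ with $\{X\zq\mid X\in\Fb{\otimes}\at_0^0\}\subsetneq V_{(\Fb)}$, so the task reduces to recognising this condition as ``$\Fb{\otimes}\at_0$ contains a rank-one endomorphism''.

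First I would do the forward direction. Given such a $\zq\neq0$, choose a nonzero $\xiup\in V_{(\Fb)}^*$ vanishing on $\{X\zq\mid X\in\Fb{\otimes}\at_0^0\}$ and consider the rank-one endomorphism $\zq{\otimes}\xiup$ of $V_{(\Fb)}$, sending $\vq$ to $\langle\vq\mid\xiup\rangle\,\zq$. A one-line computation gives $X\,(\zq{\otimes}\xiup)=(X\zq){\otimes}\xiup$, hence $\trac(X\,(\zq{\otimes}\xiup))=\langle X\zq\mid\xiup\rangle=0$ for all $X\in\Fb{\otimes}\at_0^0$. Since the trace form is non-degenerate on $\gl_{\K}(V)$ we have $(\at_0^0)^0=\at_0$, an identity preserved by extension of scalars, so $(\Fb{\otimes}\at_0^0)^0=\Fb{\otimes}\at_0$; therefore $\zq{\otimes}\xiup\in\Fb{\otimes}\at_0$, and it has rank one because $\zq$ and $\xiup$ are both nonzero.

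For the converse I would start from a rank-one $R\in\Fb{\otimes}\at_0$, write $R=\zq{\otimes}\xiup$ with $\zq\in V_{(\Fb)}{\setminus}\{0\}$ and $\xiup\in V_{(\Fb)}^*{\setminus}\{0\}$, and run the computation backwards: for every $X\in\Fb{\otimes}\at_0^0=(\Fb{\otimes}\at_0)^0$ one gets $\langle X\zq\mid\xiup\rangle=\trac(XR)=0$, so the nonzero functional $\xiup$ annihilates $\{X\zq\mid X\in\Fb{\otimes}\at_0^0\}$, which forces $\rank M(\zq)<n$ with $\zq\neq0$; Theorem~\ref{thm-t-4-6} then yields that the maximal prolongation is infinite dimensional.

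I do not anticipate a real obstacle: the whole content is the dictionary ``$M(\zq)$ loses rank at some $\zq\neq0$'' $\Longleftrightarrow$ ``$\Fb{\otimes}\at_0$ meets the rank-one locus of $\gl_{\Fb}(V_{(\Fb)})$'', and the only points requiring care are the non-degeneracy of the trace pairing, the identification $\gl_{\K}(V)=V{\otimes}V^*$, and the compatibility of orthogonal complements with the scalar extension $\otimes_{\K}\Fb$ --- all routine. If one prefers a more geometric formulation, the rank-one locus is the affine cone over the Segre embedding of $\Pro(V_{(\Fb)}){\times}\Pro(V_{(\Fb)}^*)$, but this viewpoint is not needed here.
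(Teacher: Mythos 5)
Your argument is correct and coincides with the paper's own proof: both rest on Theorem~\ref{thm-t-4-6} together with the identity $\langle X\zq\mid\xiup\rangle=\trac\bigl(X\,(\zq{\otimes}\xiup)\bigr)$ and the non-degeneracy of the trace form to pass between ``$M(\zq)$ drops rank at some $\zq\neq 0$'' and ``$\at_0$ contains a rank-one element''. The only cosmetic difference is that the paper simply reduces at the outset to $\K$ algebraically closed, whereas you track the compatibility of trace-orthogonal complements with the extension $\otimes_{\K}\Fb$ explicitly.
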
 
\begin{proof}
We can as well assume that $\K$ is algebraically closed. \par
All rank one elements of $\gl_\K(V)$ can be written in the form $\vq{\otimes}\xiup,$
with nonzero $\vq{\in}V$ and $\xiup{\in}V^*.$
If $X{\in}\gl_{\K}(V),$ we obtain 
\begin{equation}\tag{$*$} \label{eq-3.*}
 \langle{X}\vq\,{|}\, \xiup\rangle = \trac(X\,(\vq{\otimes}\xiup)).
\end{equation}
If $\at_0$ contains $\vq{\otimes}\xiup,$ then $X\vq{\in}\xiup^0{=}\{\wq{\in}V{\mid} \langle\wq\,{|}\,\xiup
\rangle{=}0\}$ for all $X{\in}\at_0^0,$ showing that $M(\vq)$ has rank less than $n{=}\dim(V).$ 
Then $\dim(\at){=}\infty$ by Theorem~\ref{thm-t-4-6}.\par
Vice versa, if $M(\vq)$ has rank less than $n$ for some $\vq{\neq}0,$ we can find a
nonzero $\xiup{\in}V^*$ such that $X\vq{\in}\xiup^0$ for all $X{\in}\at_0^0.$ Since the trace form
is nondegenerate on $\gl_{\K}(V),$ by \eqref{eq-3.*} this implies that $(\vq{\otimes}\xiup)$
is a rank one element of~$\at_0.$ 
\end{proof}
\subsection{Prolongation of irreducible representations}
\label{subs-3-3}
 Assume that $\K$ is algebraically closed and let $V$ be a finite dimensional faithful irreducible
 representation of a reductive Lie algebra $\at_0$ over $\K,$ having a center $\zt_0$ of dimension
 ${\leq}1.$ 
 Let $\st=[\at_0,\at_0]$ be the semisimple ideal of $\at_0,$  
 $\hg$ its Cartan subalgebra 
 and $\Rad,$ $\Lambda_W$ its corresponding root system and
 weight lattice. 
 Fix a lexicographic order on $\Rad,$ corresponding to the choice of a Borel subalgebra
 of $\st.$  Then
 $V$ is a faithful irreducible $\st$-module. Let $\Lambda(V){\subset}\Lambda_W$ be the
 set of its weights and
$\phiup$ its dominant weight.
If $\psiup$ is minimal in $\Lambda(V),$ then $\phiup{-}\psiup$ is  dominant 
in $\Lambda(V{\otimes}V^*).$ Let $\vq_\phiup$ be a maximal vector in $V$
and $\xiup_{{-}\psiup}$ a maximal covector in $V^*.$ Then $v_{\phiup}\otimes\xiup_{{-}\psiup}$
is an element, of rank one on $V,$ 
 generating an irreducible $\st$-sub-module $L_{\phiup{-}\psiup}$ 
 of $V{\otimes}V^*.$ 
  By a Theorem of Dynkin (see \cite{As94}, \cite[Ch.XIV]{Cahn}, \cite{Dy1952} ) 
  we know that all  nonzero elements
 of $L_{\phiup{-}\psiup}^0$ have rank larger than one. Hence by using
 Theorem~\ref{thm-3.11},
 we obtain 
\begin{thm}\label{thm-3.12}
 The maximal effective
 prolongation 
of type $\at_0$ of $V$
 is infinite dimensional if and only if \par\centerline{$L_{\phiup{-}\psiup}{\subset}\st.$}
\end{thm}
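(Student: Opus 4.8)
The plan is to reduce the statement to Theorem~\ref{thm-3.11} and to the quoted theorem of Dynkin, the rest being elementary $\st$-module bookkeeping inside $\gl_{\K}(V)$. Since $\K$ is algebraically closed we have $\Fb{=}\K$, so by Theorem~\ref{thm-3.11} the maximal effective prolongation of type $\at_0$ of $V$ is infinite dimensional if and only if $\at_0$ contains a nonzero element of rank one on $V$; hence it suffices to show that $\at_0$ contains such an element if and only if $L_{\phiup-\psiup}\subseteq\st$.

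First I would record the position of $\at_0$ in $\gl_{\K}(V)$. As $\at_0$ is reductive, $\at_0{=}\zt_0\oplus\st$ with $\st{=}[\at_0,\at_0]$ semisimple; since $V$ is $\at_0$-irreducible the centre acts by scalars, so $\zt_0\subseteq\K{\cdot}\Id$ and $\st\subseteq\slt_{\K}(V)$. I view $\gl_{\K}(V){=}V{\otimes}V^*$ as a (semisimple) $\st$-module under the adjoint action. Because $V$ is $\st$-irreducible, $\Hom_{\st}(V,V){=}\K{\cdot}\Id$, so $\K{\cdot}\Id$ is the trivial isotypic component of $\gl_{\K}(V)$, and every other isotypic component is killed by the trace, hence contained in $\slt_{\K}(V)$. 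The submodule $L_{\phiup-\psiup}$ generated by $\vq_\phiup{\otimes}\xiup_{-\psiup}$ is nontrivial: its generator has rank one, so (as $\dim V{\geq}2$) it is not a multiple of $\Id$; consequently $L_{\phiup-\psiup}\subseteq\slt_{\K}(V)$ and $L_{\phiup-\psiup}$ is trace-orthogonal to $\K{\cdot}\Id$. Two standard facts are then needed: $L_{\phiup-\psiup}$ is the Cartan (top) component of $V{\otimes}V^*$, hence occurs with multiplicity one; and it is self-dual (readily checked, since $\psiup$ is the minimal weight of $V$). From these I would deduce the dichotomy: an $\st$-submodule of $\gl_{\K}(V)$ either contains $L_{\phiup-\psiup}$ or is contained in its trace-orthogonal complement $L_{\phiup-\psiup}^{0}$. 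Indeed, by the canonicity of the isotypic decomposition such a submodule is the direct sum of its intersections with the isotypic components; if it does not contain the irreducible, multiplicity-one module $L_{\phiup-\psiup}$, then it is a sum of isotypic components of types distinct from that of $L_{\phiup-\psiup}$, and, $L_{\phiup-\psiup}$ being self-dual, no nonzero $\st$-invariant pairing joins it to a module of a distinct type, so all those components lie in $L_{\phiup-\psiup}^{0}$.

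For the ``if'' direction: if $L_{\phiup-\psiup}\subseteq\st\subseteq\at_0$, then $\vq_\phiup{\otimes}\xiup_{-\psiup}\in\at_0$ has rank one, and Theorem~\ref{thm-3.11} gives an infinite-dimensional prolongation. For the ``only if'' direction: if the prolongation is infinite dimensional, then $\at_0$---which is an $\st$-submodule of $\gl_{\K}(V)$, since $[\st,\at_0]\subseteq\at_0$---contains a rank-one element $X$, so by the dichotomy either $L_{\phiup-\psiup}\subseteq\at_0$ or $\at_0\subseteq L_{\phiup-\psiup}^{0}$. The second case is impossible, because by Dynkin's theorem every nonzero element of $L_{\phiup-\psiup}^{0}$ has rank $>1$ while $\rank X{=}1$. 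Hence $L_{\phiup-\psiup}\subseteq\at_0{=}\zt_0\oplus\st$; projecting onto the abelian summand $\zt_0$ annihilates the nontrivial module $L_{\phiup-\psiup}$, so $L_{\phiup-\psiup}\subseteq\st$, which completes the argument.

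The delicate part is not a single computation but the assembly of the representation-theoretic inputs: that the Cartan component $L_{\phiup-\psiup}$ sits in $V{\otimes}V^*$ with multiplicity one and is self-dual, and that these properties force any $\st$-submodule missing $L_{\phiup-\psiup}$ into $L_{\phiup-\psiup}^{0}$. Granting those, the reduction to a rank-one condition is exactly Theorem~\ref{thm-3.11}, and the non-existence of rank-one elements in $L_{\phiup-\psiup}^{0}$ is precisely the cited theorem of Dynkin, so the proof closes at once.
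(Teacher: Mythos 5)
Your proof is correct and follows essentially the same route as the paper's: reduce via Theorem~\ref{thm-3.11} to the existence of a rank-one element of $\at_0$, decompose $\at_0$ (equivalently $\st$) into irreducible $\st$-submodules of $V{\otimes}V^*$, observe that every summand other than the multiplicity-one, self-dual Cartan component $L_{\phiup{-}\psiup}$ lies in $L_{\phiup{-}\psiup}^0$, and invoke Dynkin's theorem. The paper's own proof is a terser version of exactly this argument, leaving the multiplicity-one and self-duality justifications implicit.
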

\begin{proof}
The Lie algebra $\st$ decomposes into a direct sum $S_1{\oplus}\cdots{\oplus}S_k$ 
of irreducible $\st$-sub-modules of
$V{\otimes}V^*.$ The summands $S_i$ which are distinct from $L_{\phiup{-}\psiup}$
are contained in $L_{\phiup{-}\psiup}^0.$ The statement follows from this observation.
\end{proof}
Under the assumptions above, if $\st{=}[\at_0,\at_0]$ is simple, then
the maximal prolongation $\at$ of type $\at_0$ of $V$
is primitive in the sense 
explained in  
\cite{guillemin1970}.
Then Theorem~\ref{thm-3.12} yields easily 
a result about the infinite dimensionality 
of the maximal effective \textit{primitive}
prolongations
that has been already 
proved 
by several Authors
(see \cite{Cartan1909, guillemin1970, 
GQS1966, Guillemin1967, Kac1967, kn1965b,
morimoto1970,  shnider1970,  
Wil1971}). \par 
\begin{prop} \label{prop-3.11}
Assume that $\K$ is algebraically closed,
that $\at_0$ is reductive, with $[\at_0,\at_0]$ simple, 
and that $V$ is a faithful irreducible $\at_0$-module. Then
the maximal effective prolongation of type 
$\at_0$ of $V$ is infinite dimensional if and only if
one of the following is verified 
\begin{itemize}
 \item[$(i)$] 
 $\at_0$ is equal either to $\gl_{\K}(V)$ or $\slt_{\K}(V)$;
 \item[$(ii)$] $V{\simeq}\K^{2n}$ 
 for some integer $n{\geq}2$ and $\at_0$ is isomorphic either to
 $\spt(n,\K)$ or to $\mathfrak{c}\spt(n,\K).$ \qed
\end{itemize}
\end{prop}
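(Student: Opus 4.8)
The plan is to read the answer off Theorem~\ref{thm-3.12}. By that theorem, the maximal effective $\at_0$-prolongation $\at$ of $V$ is infinite dimensional if and only if $L_{\phiup-\psiup}\subseteq\st$ inside $V{\otimes}V^*{=}\gl_{\K}(V)$, where $\st{=}[\at_0,\at_0]$ is identified with its image under the faithful representation $\at_0\to\gl_{\K}(V)$. First I would note that, since $V$ is a faithful $\at_0$-module, $\st$ acts nontrivially, so $V$ is a nontrivial irreducible $\st$-module and its dominant weight $\phiup$ is nonzero; moreover $\psiup{=}w_0\phiup$ is the lowest weight of $V$, so $L_{\phiup-\psiup}$ has highest weight $\phiup{-}w_0\phiup{=}\phiup{+}\phiup^*$ and is exactly the Cartan component of $V{\otimes}V^*$, which occurs there with multiplicity one. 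On the other hand $\st$ sits in $\gl_{\K}(V)$ as an irreducible submodule isomorphic to the adjoint representation, of highest weight the highest root $\thetaup$ of $\st$. Hence $L_{\phiup-\psiup}\subseteq\st$ forces $L_{\phiup-\psiup}{=}\st$, as $\st$ is irreducible, and therefore $\thetaup{=}\phiup{+}\phiup^*$; conversely, if $\thetaup{=}\phiup{+}\phiup^*$ then multiplicity one of the Cartan component shows that the two submodules $\st$ and $L_{\phiup-\psiup}$ of $\gl_{\K}(V)$ coincide. In other words $\dim_{\K}\at{=}\infty$ if and only if $\thetaup{=}\phiup{-}w_0\phiup$, a condition that involves only the simple algebra $\st$ and the weight $\phiup$, and is insensitive to the at most one-dimensional center of $\at_0$ (spanned by $\id_V$ when nonzero); this already reconciles the cases $\gl_{\K}(V)$ versus $\slt_{\K}(V)$ in $(i)$ and $\spt(n,\K)$ versus $\mathfrak{c}\spt(n,\K)$ in $(ii)$.

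It then remains to solve $\thetaup{=}\phiup{-}w_0\phiup$ over all simple $\st$ and all nonzero dominant $\phiup$, which I would do by distinguishing whether $-1$ lies in the Weyl group. If $w_0{=}-\id$ (types $B_n$, $C_n$, $D_n$ with $n$ even, $E_7$, $E_8$, $F_4$, $G_2$) the equation is $2\phiup{=}\thetaup$, which has a nonzero dominant solution exactly when $\tfrac12\thetaup$ is again a dominant weight. For $B_n$, $D_n$ ($n$ even), $E_7$, $E_8$, $F_4$, $G_2$ the highest root is a fundamental weight $\omegaup_j$, so $\tfrac12\thetaup$ is not a weight; for $C_n$ one has $\thetaup{=}2\omegaup_1$, giving the unique solution $\phiup{=}\omegaup_1$, that is $V{\cong}\K^{2n}$ with its standard symplectic structure, $\st{=}\spt(n,\K)$ and $\at_0{\in}\{\spt(n,\K),\mathfrak{c}\spt(n,\K)\}$, with $n{\geq}2$ because $C_1{=}A_1$. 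If $w_0{\neq}-\id$ (types $A_n$ with $n{\geq}2$, $D_n$ with $n$ odd, $E_6$) I would write $\sigmaup{=}-w_0$ for the induced diagram automorphism, so the equation becomes $\phiup{+}\sigmaup\phiup{=}\thetaup$. For $A_n$, in coordinates $\phiup{=}\sum_i a_i\omegaup_i$ with $a_i{\in}\Z_{\geq0}$, this says $(a_i{+}a_{n+1-i})_i$ equals the coordinate vector $(1,0,\dots,0,1)$ of $\thetaup{=}\omegaup_1{+}\omegaup_n$, whose only solutions are $\phiup{=}\omegaup_1$ and $\phiup{=}\omegaup_n$; in both cases $V$ is, up to duality, the standard module of $\slt_{\K}(V)$ and $\at_0{\in}\{\slt_{\K}(V),\gl_{\K}(V)\}$. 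For $D_n$ ($n$ odd) and $E_6$ the highest root is the fundamental weight attached to the affine node, which $\sigmaup$ fixes, so its coefficient in $\phiup{+}\sigmaup\phiup$ is even and can never equal the value $1$ it has in $\thetaup$: there is no solution. Collecting the cases, $\thetaup{=}\phiup{-}w_0\phiup$ holds precisely in situations $(i)$ and $(ii)$, which proves the proposition.

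As an independent check of the "if" implication one may bypass the weight bookkeeping and appeal to Theorem~\ref{thm-3.11}: $\gl_{\K}(V)$ contains the rank-one elements $\vq{\otimes}\xiup$; $\slt_{\K}(V)$ contains the nilpotent rank-one elements $\vq{\otimes}\xiup$ with $\langle\vq\mid\xiup\rangle{=}0$; and $\spt(n,\K)$, viewed as the space of endomorphisms $X$ for which $\uq_1,\uq_2\mapsto\Omega(X\uq_1,\uq_2)$ is symmetric, contains the rank-one map $\uq\mapsto\Omega(\wq,\uq)\,\wq$ for any $\wq{\neq}0$. So in each of $(i)$ and $(ii)$ the prolongation is infinite dimensional, as it must be.

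The main obstacle is the second paragraph: carrying the elementary equation $\thetaup{=}\phiup{-}w_0\phiup$ cleanly through the classification of simple Lie algebras, in particular treating the types with $w_0{\neq}-\id$ uniformly and checking the exceptional highest roots. Everything else is formal given Theorems~\ref{thm-3.11} and~\ref{thm-3.12}; the resulting list is the classical one recalled by the references cited above, and the point of the argument is just that it falls out of the duality description of the prolongation.
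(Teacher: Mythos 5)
Your proposal is correct and follows the route the paper intends: Proposition~\ref{prop-3.11} carries no written proof beyond the remark that it follows ``easily'' from Theorem~\ref{thm-3.12} together with the classical references (Cartan, Guillemin--Quillen--Sternberg, Kac, Morimoto--Tanaka, \dots), so the entire content of a self-contained argument is precisely the classification step you carry out. Your reduction is sound: since $\st$ is simple, its image in $\gl_{\K}(V)$ is an irreducible submodule of highest weight the highest root $\theta$, while $L_{\phi-\psi}$ is the multiplicity-one Cartan component of highest weight $\phi-w_0\phi$, so the containment in Theorem~\ref{thm-3.12} is equivalent to $\theta=\phi-w_0\phi$; and this condition sees only $\st$, not the (at most one-dimensional, scalar) center, which accounts for the pairs in $(i)$ and $(ii)$. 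The only places to tighten are the low-rank coincidences in your case analysis: for $B_2$ the highest root is $2\omega_2$, \emph{not} a fundamental weight, so $\phi=\omega_2$ (the $4$-dimensional spin representation) is in fact a solution of $2\phi=\theta$ there --- harmless only because $B_2\cong C_2$ identifies it with the standard symplectic case of $(ii)$; similarly for $D_3$ the highest root is $\omega_2+\omega_3$, not the fundamental weight at the affine node, and the half-spin solutions are absorbed into case $(i)$ via $D_3\cong A_3$. Restricting to $B_n$ with $n\geq 3$ and $D_n$ with $n\geq 4$ (or noting these isomorphisms explicitly) closes that small gap; with that, the argument is complete, and your rank-one cross-check through Theorem~\ref{thm-3.11} is a correct independent verification of the ``if'' direction.
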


\section{$\gl_{\K}(V)$-prolongations of 
FGLA's
of the second kind} \label{sec5}
Let $\mt{=}\gt_{{-}1}\oplus\gt_{{-}2}$ be an FGLA of the second kind. 
Set $V{=}\gt_{{-}1}.$ 
By Proposition~\ref{prop-tan-1.4}, $\mt$ is isomorphic to a quotient
$\ft(V){/}\Ki,$ for a graded ideal $\Ki$ of $\ft(V)$ with $\ft_{[3]}(V){\subseteq}\Ki{\subsetneqq}\ft_{[2]}(V).$ 
We note that 
$\Ki{=}\Ki_{\;{-}2}{\oplus}\ft_{[3]}(V)$ is a graded ideal of $\ft(V)$ 
for  every
proper vector subspace $\Ki_{\;{-}2}$ of 
$\ft_{-2}(V).$ \par 
Since
 $\ft_{-2}(V){=}\Lambda^2(V),$ 
the subspace
$\Ki_{\;-2}$ is the kernel 
 of the surjective linear map 
 $\lambdaup:\Lambda^2(V)\rightarrow\!\!\!\!\rightarrow\gt_{-2}$ 
associated to the bilinear map   
$(\vq_1,\vq_2)\to [\vq_1,\vq_2]$ defined by the Lie product of elements of $V$:
\begin{equation}\label{eq2.5a}
 \xymatrix{{V}\times{V} \ar[rr]^{(\vq_1,\vq_2)
 \to\vq_1{\wedge}\vq_2} \ar[dr]_{(\vq_1,\vq_2)\to[\vq_1,\vq_2]\quad}
 && \Lambda^2({V}) \ar[dl]^{\lambdaup}\\
 &\,\gt_{{-}2}.}
\end{equation}
\begin{exam}
 Let us consider $n+\binom{n}{2}$ variables, 
 that we label by $x_i$ for $1{\leq}i{\leq}n$ and
 $t_{i,j}$ for $1{\leq}i{<}j{\leq}n.$   
 The algebra $\mt$ of real vector fields 
 with polynomial coefficients generated by 
\begin{equation*}
 \gt_{-1}\simeq\R^n=\left\langle 
 \frac{\partial}{\partial{x}_i}+{\sum}_{h<i}x_h\,
 \frac{\partial}{\partial{t}_{h,i}}
 -{\sum}_{h>i}x_h\,\frac{\partial}{\partial{t}_{i,h}}
 \mid 1{\leq}i{\leq}n\right\rangle
\end{equation*}
has 
$\gt_{-2}{=}[\gt_{-1},\gt_{-1}]=\left\langle 
\frac{\partial}{\partial{t}_{i,j}}\mid 1{\leq}i{<}j{\leq}n
\right\rangle$
so that $\mt\simeq\R^n\oplus\Lambda^2(\R^n),$ 
corresponds to the case $\Ki_{\;-2}=\{0\}.$ 
\end{exam}
We keep the notation \eqref{eq-1.13}.
For te case of FGLA's of the second kind 
Proposition~\ref{prop-tan-1.4} reads
\begin{prop}
Two graded fundamental Lie algebras of the second kind 
 $\mt(\Ki)$ and $\mt(\Ki')$ 
 are isomorphic if and only if the homogeneous parts of second degree 
 $\Ki_{\;{-2}}$ and $\Ki'_{\;{-2}}$ of $\Ki$ and $\Ki'$ 
 are $\GL_{\K}(V)$-congruent.
 \qed
\end{prop}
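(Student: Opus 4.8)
The plan is to read off this statement from the general classification in Proposition~\ref{prop-tan-1.4}, since that result already identifies the isomorphism class of an FGLA $\ft(V){/}\Ki$ with the $\GL_{\K}(V)$-congruence class of the graded ideal $\Ki\subseteq\ft_{[2]}(V)$. The only thing to check is that, for FGLA's of the second kind, this congruence class is detected entirely by the degree~$-2$ part of the ideal.

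First I would record the shape of the ideals. Since $\mt(\Ki)$ and $\mt(\Ki')$ have kind~$2$, we have $\ft_{[3]}(V)\subseteq\Ki$ and $\ft_{[3]}(V)\subseteq\Ki'$; combined with $\Ki,\Ki'\subseteq\ft_{[2]}(V)$ and the $\Z$-grading, this forces
\begin{equation*}
 \Ki=\Ki_{\,-2}\oplus\ft_{[3]}(V),\qquad \Ki'=\Ki'_{\,-2}\oplus\ft_{[3]}(V),
\end{equation*}
with $\Ki_{\,-2},\Ki'_{\,-2}$ (necessarily proper, since the kind is exactly~$2$) vector subspaces of $\ft_{-2}(V)=\Lambda^2(V)$, and with one and the same graded ideal $\ft_{[3]}(V)$ occurring in both. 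The compatibility condition $[\Ki_{\,-2},V]\subseteq\Ki_{\,-3}=\ft_{-3}(V)$ is automatic, so no further constraint on $\Ki_{\,-2}$ intervenes.

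The key observation is that the canonical action of $\GL_{\K}(V)$ on $\ft(V)\subseteq\T(V)$ is by grading-preserving automorphisms, so every homogeneous summand $\ft_{-\pq}(V)$, and hence the ideal $\ft_{[3]}(V)$, is $\GL_{\K}(V)$-stable. Consequently, for $g\in\GL_{\K}(V)$ we have $g{\cdot}\Ki=(g{\cdot}\Ki_{\,-2})\oplus\ft_{[3]}(V)$, and comparing the degree~$-2$ components gives $g{\cdot}\Ki=\Ki'$ if and only if $g{\cdot}\Ki_{\,-2}=\Ki'_{\,-2}$. Therefore $\Ki$ and $\Ki'$ are $\GL_{\K}(V)$-congruent precisely when $\Ki_{\,-2}$ and $\Ki'_{\,-2}$ are $\GL_{\K}(V)$-congruent as subspaces of $\Lambda^2(V)$; feeding this into Proposition~\ref{prop-tan-1.4} yields the statement. (Equivalently, in terms of the diagram~\eqref{eq2.5a}, $\mt(\Ki)$ is determined up to isomorphism by the $\GL_{\K}(V)$-orbit of $\ker\lambdaup$ in $\Lambda^2(V)$.)

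There is essentially no obstacle here: the one point worth a line of care is that an abstract Lie algebra isomorphism $\mt(\Ki)\to\mt(\Ki')$ automatically respects the $\Z$-grading — so that it lifts, via the universal property of $\ft(V)$, to an element of $\GL_{\K}(V)$ carrying $\Ki$ onto $\Ki'$ — but this is exactly what is already packaged into Proposition~\ref{prop-tan-1.4}, because $\gt_{-1}$ is intrinsically the generating part in degree~$1$ of the lower central series and the whole grading of an FGLA is recovered from it. Once Proposition~\ref{prop-tan-1.4} is available, the present proposition is immediate.
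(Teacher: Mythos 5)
Your proposal is correct and matches the paper's treatment: the paper states this proposition with no proof beyond the remark that it is Proposition~\ref{prop-tan-1.4} ``read'' in the second-kind case, and your argument — writing $\Ki=\Ki_{\,-2}\oplus\ft_{[3]}(V)$ and using that the grading-preserving $\GL_{\K}(V)$-action fixes $\ft_{[3]}(V)$, so congruence of the ideals is equivalent to congruence of their degree~$-2$ parts — is exactly the specialization the authors intend. Nothing is missing.
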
 
Fix  
a proper vector subspace $\Ki_{\;{-}2}$ of $\ft_{{-}2}(V){=}\Lambda^2(V)$ and 
the corresponding $2$-cofinite ideal 
$\Ki{=}\Ki_{\;{-}2}{\oplus}\ft_{[3]}(V).$ 
By \eqref{eq-ta-1-12a}, the Lie algebra of zero-degree derivations of $\mt(\Ki)$ is characterised by
\begin{lem}
 \label{lem-tan-6-3} 
The Lie algebra  
$\gt_0(\Ki)$ of the $0$-degree  derivations 
of $\mt(\Ki)$ is 
\begin{equation}\vspace{-20pt}
\label{eq2.3} 
 \gt_0(\Ki)=\{A\in\gl_\K(V)\mid T_A(\Ki_{\;-2})\subseteq\Ki_{\;-2}\}.
\end{equation}\qed
\end{lem}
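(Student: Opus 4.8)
The plan is to read the lemma off from the description \eqref{eq-ta-1-12a} of the maximal effective prolongation, specialised to $\Li=\gl_{\K}(V)$. In that case the degree-$0$ summand of $\Ft(V,\gl_{\K}(V))$ is all of $\gl_{\K}(V)$, so \eqref{eq-ta-1-12a} gives $\gt_0(\Ki)=\{A\in\gl_{\K}(V)\mid [A,\Ki]\subseteq\Ki\}$, and since the bracket in $\Ft(V,\gl_{\K}(V))$ satisfies $[A,X]=T_{\!A}(X)$ for $X\in\ft(V)$, the condition is precisely that the degree-$0$ derivation $T_{\!A}$ of $\ft(V)$ normalises $\Ki$. (Equivalently, $T_{\!A}$ descends to a degree-$0$ derivation of $\mt(\Ki)=\ft(V)/\Ki$; and since $V=\gt_{-1}(\Ki)$ generates $\mt(\Ki)$, every degree-$0$ derivation of $\mt(\Ki)$ arises in this way from a unique $A\in\gl_{\K}(V)$, which is why $\gt_0(\Ki)$ may be viewed as the Lie algebra of $0$-degree derivations of $\mt(\Ki)$.) It therefore remains to check that, for $A\in\gl_{\K}(V)$, one has $T_{\!A}(\Ki)\subseteq\Ki$ if and only if $T_{\!A}(\Ki_{\;-2})\subseteq\Ki_{\;-2}$.

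First I would recall that $\gl_{\K}(V)$ acts on the tensor algebra $\T(V)$ by degree-$0$ derivations, leaving $\ft(V)$ and each of its homogeneous summands $\ft_{-\pq}(V)$ invariant; in particular each $T_{\!A}$ preserves the ideal $\ft_{[3]}(V)={\sum}_{\pq\geq{3}}\ft_{-\pq}(V)$. Granting this, the forward implication is immediate: intersecting $T_{\!A}(\Ki)\subseteq\Ki$ with $\ft_{-2}(V)$ and using $\Ki\cap\ft_{-2}(V)=\Ki_{\;-2}$ (since $\ft_{[3]}(V)$ lives in degrees $\leq{-3}$) yields $T_{\!A}(\Ki_{\;-2})\subseteq\Ki_{\;-2}$. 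For the converse, use the decomposition $\Ki=\Ki_{\;-2}\oplus\ft_{[3]}(V)$ of the $2$-cofinite ideal: then $T_{\!A}(\Ki_{\;-2})\subseteq\Ki_{\;-2}\subseteq\Ki$ by hypothesis, while $T_{\!A}(\ft_{[3]}(V))\subseteq\ft_{[3]}(V)\subseteq\Ki$ by the invariance just recalled, so $T_{\!A}(\Ki)\subseteq\Ki$.

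There is no genuine obstacle here: the statement is in essence the $\pq=0$ instance of \eqref{eq-ta-1-12a} for $\Li=\gl_{\K}(V)$, and the only point to verify is the routine $T_{\!A}$-invariance of $\ft_{[3]}(V)$, which is exactly what makes the normalisation condition on $\Ki$ depend solely on its second-degree component $\Ki_{\;-2}$.
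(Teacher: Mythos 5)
Your argument is correct and is exactly the reasoning the paper leaves implicit: the lemma is stated with no proof because it is the $\pq=0$, $\Li=\gl_{\K}(V)$ instance of \eqref{eq-ta-1-12a}, and the reduction of the condition $T_{\!A}(\Ki)\subseteq\Ki$ to $T_{\!A}(\Ki_{\;-2})\subseteq\Ki_{\;-2}$ via the degree decomposition $\Ki=\Ki_{\;-2}\oplus\ft_{[3]}(V)$ and the $T_{\!A}$-invariance of $\ft_{[3]}(V)$ is precisely the omitted routine step (compare the one-line proof of Lemma~\ref{lem-tan-4-2}). Nothing to add.
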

We recall from \S\ref{sec-fund} that, for $\Ki{=}\Ki_{\;-2}{\oplus}\ft_{[3]}(V),$ 
the maximal effective canonical $\gl_\K(V)$-prolongation of $\mt(\Ki)$ 
is the $\Z${-}graded Lie algebra
\begin{equation}
\gt(\Ki)={\sum}_{\pq\geq{-}2}\gt_{\pq}(\Ki)\end{equation} 
whose homogeneous summands 
are defined by 
\begin{equation} 
\begin{cases}
\gt_{-2}(\Ki){=}\Lambda^2(V){/}\Ki_{\;-2},\\
\gt_{-1}(\Ki){=}V,\\
\gt_h(\Ki)=\Der_h\left(\mt,{\sum}_{\pq<h}\gt_{\pq}(\Ki)\right),\;\;\text{for $h{\geq}0.$} 
\end{cases}
\end{equation}
The spaces
$\gt_h(\Ki)$ are defined by recurrence and 
consist of the  degree $h$ 
homogeneous derivations of $\mt(\Ki)$ with values in the $\mt(\Ki)$-module 
${\sum}_{\pq<h}\gt_{\pq}(\Ki)$: 
an element $\alphaup$ 
of $\gt_h(\Ki)$ is identified with a map 
\begin{gather*}
\alphaup:(V,\Lambda^2(V))\longrightarrow
(\gt_{h-1}(\Ki),\gt_{h-2}(\Ki)),\;\; \text{with}\\
\alphaup(\vq{\wedge}\wq)=
(\alphaup(\vq))(\wq){-}(\alphaup(\wq))(\vq),\;\;\forall \vq,\wq{\in} 
V,\;\;\text{and}\;\;
\alphaup(\omegaup)=0,\;\;\forall\omegaup\in\Ki_{\;-2}.
\end{gather*}
By Theorem~\ref{thm-t-4-2}, $\gt(\Ki)$ is finite dimensional if and only if 
\begin{equation}
 \at(\Ki)\,{=}\!\!\sum_{\pq=-1}^\infty \at_{\pq}(\Ki),\;\text{with}\; \at_{\pq}(\Ki){=}\{
 \alphaup{\in}\gt_{\pq}(\Ki){\mid} [\alphaup,\gt_{-2}(\Ki)]{=}\{0\}\}
\end{equation}
is finite dimensional.
\begin{lem} 
We have \;
$\at_0(\Ki_{\;-2})=\{A\in\gl_n(\R)\mid T_A(\Lambda^2(\R^n))\subseteq\Ki_{\;-2}\}.$
\qed
\end{lem}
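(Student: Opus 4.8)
The plan is to obtain the formula by unwinding the definitions; it is a direct specialisation of Lemma~\ref{lem-tan-4-2}, so no genuinely new argument is required. First I would recall that, since $\mt(\Ki)$ has kind two, the ideal $\Ki$ is completely determined by its degree $-2$ part, namely $\Ki{=}\Ki_{\;-2}\oplus\ft_{[3]}(\R^n)$, so that $\at_0(\Ki_{\;-2})$ is simply another name for the degree-zero summand $\at_0(\Ki)$ attached after Lemma~\ref{lem-tan-6-3}: by construction $\at_0(\Ki){=}\{A\in\gt_0(\Ki)\mid[A,\gt_{-2}(\Ki)]{=}\{0\}\}$, with $\gt_{-2}(\Ki){=}\Lambda^2(\R^n)/\Ki_{\;-2}$ and, by Lemma~\ref{lem-tan-6-3}, $\gt_0(\Ki){=}\{A\in\gl_n(\R)\mid T_A(\Ki_{\;-2})\subseteq\Ki_{\;-2}\}$.

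Next I would observe that for every $A\in\gl_n(\R)$ the derivation $T_A$ of the tensor algebra leaves $\Lambda^2(\R^n)\subset\T^2(\R^n)$ invariant, and that the condition $A\in\gt_0(\Ki)$ says exactly that $T_A$ also preserves $\Ki_{\;-2}$. In that case $T_A$ descends to an endomorphism $\overline{T}_A$ of the quotient $\gt_{-2}(\Ki){=}\Lambda^2(\R^n)/\Ki_{\;-2}$, and, from the way the Lie product of $\gt(\Ki)$ is built (as a subalgebra of $\Ft(\R^n)$), this $\overline{T}_A$ is precisely the adjoint action $[A,\,\cdot\,]$ on $\gt_{-2}(\Ki)$. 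Hence, for $A\in\gt_0(\Ki)$, the requirement $[A,\gt_{-2}(\Ki)]{=}\{0\}$ is equivalent to $\overline{T}_A{=}0$, i.e. to $T_A(\Lambda^2(\R^n))\subseteq\Ki_{\;-2}$.

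Finally I would note that the membership constraint $A\in\gt_0(\Ki)$ is then automatic: if $T_A(\Lambda^2(\R^n))\subseteq\Ki_{\;-2}$ then in particular $T_A(\Ki_{\;-2})\subseteq\Ki_{\;-2}$. Combining the two implications gives $\at_0(\Ki_{\;-2}){=}\{A\in\gl_n(\R)\mid T_A(\Lambda^2(\R^n))\subseteq\Ki_{\;-2}\}$, as asserted. A shortcut is to apply Lemma~\ref{lem-tan-4-2} with $\Li{=}\gl_{\K}(\R^n)$ directly: since $\ft_{[2]}(\R^n){=}\Lambda^2(\R^n)\oplus\ft_{[3]}(\R^n)$, since $T_A$ is degree-preserving, and since $T_A(\ft_{[3]}(\R^n))\subseteq\ft_{[3]}(\R^n)\subseteq\Ki$ holds trivially, the condition $T_A(\ft_{[2]}(\R^n))\subseteq\Ki$ collapses to $T_A(\Lambda^2(\R^n))\subseteq\Ki\cap\Lambda^2(\R^n){=}\Ki_{\;-2}$. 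There is no real obstacle here; the only point that deserves to be written out is the identification of $[A,\,\cdot\,]$ on $\gt_{-2}(\Ki)$ with the descent of $T_A$, which is immediate from the construction of the maximal prolongation.
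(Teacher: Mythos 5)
Your argument is correct and is essentially the route the paper intends: the lemma is stated with a bare \qed precisely because it is the specialisation of Lemma~\ref{lem-tan-4-2} to $\Li=\gl_{\K}(V)$ and $\Ki=\Ki_{\;-2}\oplus\ft_{[3]}(V)$, where the fact that $T_A$ preserves the grading (so $T_A(\ft_{[3]}(V))\subseteq\ft_{[3]}(V)\subseteq\Ki$ automatically) collapses the condition $T_A(\ft_{[2]}(V))\subseteq\Ki$ to $T_A(\Lambda^2(V))\subseteq\Ki_{\;-2}$. Both your direct unwinding via $[A,\,\cdot\,]=\overline{T}_A$ on $\gt_{-2}(\Ki)$ and your shortcut are sound, and you correctly note that the constraint $T_A(\Ki_{\;-2})\subseteq\Ki_{\;-2}$ defining $\gt_0(\Ki)$ is subsumed by the stronger condition.
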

\par\smallskip
Fix an identification $V{\simeq}\K^n,$ to consider the non degenerate symmetric bilinear form
$\bq(\vq,\wq)=\vq^{\intercal}{\cdot}\wq$ on $V.$ It yields an isomorphism
\begin{equation}
 \rhoup:\Lambda^2(V)\longrightarrow\ot(V),\;\;\text{with}\;\;
 \rhoup(\vq{\wedge}\wq)=\vq{\cdot}\wq^{\intercal}-\wq{\cdot}\vq^\intercal,\;\forall\vq,\wq\in{V},
\end{equation}
between $\Lambda^2(V)$ and the orthogonal Lie algebra 
\begin{equation}
 \ot(V)=\{X\in\gl_{\K}(V)\mid X^{\intercal}{+}\,X=0\}.
\end{equation}
Under this identification, the action of $A{\in}\gl_{\K}(V)$ on $\Lambda^2(V)$ 
can be described by $A{\cdot}X{=}A\,X{+}X\,A^{\intercal}.$ 
We use $\rhoup$ to identify 
$\Ki_{\;-2}$ with a linear subspace of $\ot(V).$  
With this notation, we introduce
\begin{equation}\label{eq2.4}
\Ki_{\;-2}^\perp=\{X\in\ot(V)\mid \trac(X{\, }K)=0,\;\forall K\in\Ki_{\;-2}\}\simeq\gt_{-2}(\Ki).
\end{equation}

\begin{rmk} For the ideal $\Ki'=\Ki_{\;-2}^\perp\oplus\
\ft_{[3]}(V)$ we have
\begin{equation}
\gt_0(\Ki')=\{A^\intercal\mid A\in\gt_0(\Ki)\}.
\end{equation}
which is a Lie algebra anti-isomorphic to  
$\gt_0(\Ki),$ but, of course, $\at_0(\Ki)$ and $\at_0(\Ki')$ may turn out to be quite different. \par
Note that $\Ki^\perp$ can be canonically identified with $\gt_{{-}2}.$ 
\end{rmk}
\begin{lem}\label{lem-t-6-14}
 We have \;
$ \at_0^0(\Ki)=\{X{\, }Y\mid X\in\ot(V),\; Y\in\Ki_{\;-2}^\perp\}.$
\end{lem}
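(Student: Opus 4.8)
The plan is to turn $\at_0(\Ki)$ into an explicit linear condition, and then read off its annihilator for the trace form. First, taking $\Li{=}\gl_{\K}(V)$ in Lemma~\ref{lem-tan-4-2} and noting that $T_A$ preserves the grading (so that $T_A(\ft_{[3]}(V))\subseteq\ft_{[3]}(V)\subseteq\Ki$ is automatic), one gets $\at_0(\Ki)=\{A\in\gl_{\K}(V)\mid T_A(\ft_{-2}(V))\subseteq\Ki_{\;-2}\}$. Under the isomorphism $\rhoup:\Lambda^2(V)\to\ot(V)$, the derivation $T_A$ becomes $X\mapsto A\,X{+}X\,A^{\intercal}$, which is again skew-symmetric, so
\begin{equation*}
 \at_0(\Ki)=\{A\in\gl_{\K}(V)\mid A\,X{+}X\,A^{\intercal}\in\Ki_{\;-2}\;\;\text{for every}\;\; X\in\ot(V)\}.
\end{equation*}
Because the trace form is nondegenerate on $\ot(V)$, the orthogonal, inside $\ot(V)$, of $\Ki_{\;-2}^{\perp}$ is $\Ki_{\;-2}$ itself; hence $A\,X{+}X\,A^{\intercal}\in\Ki_{\;-2}$ is equivalent to $\trac\bigl((A\,X{+}X\,A^{\intercal})\,Y\bigr)=0$ for all $Y\in\Ki_{\;-2}^{\perp}$.

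The crux is the identity
\begin{equation*}
 \trac\bigl((A\,X{+}X\,A^{\intercal})\,Y\bigr)=2\,\trac(A\,X\,Y),\qquad \forall A\in\gl_{\K}(V),\;\;\forall X,Y\in\ot(V).
\end{equation*}
This follows from cyclic and transposition invariance of the trace together with $X^{\intercal}{=}{-}X$, $Y^{\intercal}{=}{-}Y$:
\begin{equation*}
 \trac(X\,A^{\intercal}\,Y)=\trac(A^{\intercal}\,Y\,X)=\trac\bigl((A^{\intercal}\,Y\,X)^{\intercal}\bigr)=\trac(X^{\intercal}\,Y^{\intercal}\,A)=\trac(X\,Y\,A)=\trac(A\,X\,Y),
\end{equation*}
the first summand $\trac(A\,X\,Y)$ needing no manipulation. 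Since $\K$ has characteristic zero, this combines with the previous step to give
\begin{equation*}
 \at_0(\Ki)=\{A\in\gl_{\K}(V)\mid \trac\bigl(A\,(X\,Y)\bigr)=0\;\;\text{for all}\;\; X\in\ot(V),\;Y\in\Ki_{\;-2}^{\perp}\},
\end{equation*}
i.e. $\at_0(\Ki)$ is exactly the annihilator, with respect to the trace form on $\gl_{\K}(V)$, of the set $\{X\,Y\mid X\in\ot(V),\,Y\in\Ki_{\;-2}^{\perp}\}$.

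Finally, since the trace form on $\gl_{\K}(V)$ is nondegenerate, taking the annihilator a second time yields that $\at_0^0(\Ki)$ is the linear span of $\{X\,Y\mid X\in\ot(V),\,Y\in\Ki_{\;-2}^{\perp}\}$, which is the assertion (the right-hand side being read as the subspace those products generate). There is no serious obstacle in this argument: the only points to keep straight are the transpose bookkeeping in the trace identity and the fact that $\Ki_{\;-2}^{\perp}$ is an orthogonal computed inside $\ot(V)$, whereas $\at_0^0(\Ki)$ is an orthogonal computed inside $\gl_{\K}(V)$.
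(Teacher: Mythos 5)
Your proposal is correct and follows essentially the same route as the paper: the same trace identity $\trac\bigl((AX{+}XA^{\intercal})Y\bigr)=2\trac(AXY)$ for $X,Y\in\ot(V)$ is the heart of both arguments, and the rest is the biduality of the (nondegenerate) trace form on $\gl_{\K}(V)$ and on $\ot(V)$. You merely spell out two steps the paper leaves implicit — the reduction of $\at_0(\Ki)$ to the condition $AX{+}XA^{\intercal}\in\Ki_{\;-2}$ for all $X\in\ot(V)$, and the final passage to the double annihilator — and you rightly note that the right-hand side must be read as the span of the products $XY$.
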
 
\begin{proof}
 In fact, for $A{\in}\gl_{\K}(V)$ and $X,Y{\in}\ot(V),$ we obtain 
\begin{align*}
 \trac((AX{+}XA^\intercal)Y)=\trac(AXY)+\trac(YXA^{\intercal})=2\trac(A{\, }(XY)),
\end{align*}
because $(YXA^\intercal)^\intercal=A{\, }(XY).$ Thus we obtain 
\begin{align*}
 A\in\at_0\Leftrightarrow \trac((A{\, }X{+}X{\, }A^\intercal){\, }Y){=}
2\trac(A{\, }(Y{\, }X)) =0, \;\;
\forall X{\in}\ot(V),\forall{Y}{\in}\Ki^\perp,
\end{align*}
proving our statement.
\end{proof}

To apply Theorem~\ref{thm-t-4-6} to investigate the finite dimensionality of the
maximal $\gl_{\K}(V)$-prolongation of $\mt(\Ki),$ 
we need 
to construct the matrix $M(\zq)$ in \eqref{eq-3.14}, relative to $\at_0^0(\Ki),$ that we will
denote by $M_2(\Ki,\zq).$  As usual, we denote by $\Fb$ be the algebraic closure of $\K$
and set $V_{(\Fb)}=\Fb{\otimes}_{\K}V.$ \par 
 We can proceed as follows. 
Fix generators
$Y_1,\hdots,Y_m$ of $\Ki^\perp$ and, for ${{\zq}}{\in}V_{(\Fb)}$  and $1{\leq}i{\leq}m,$ 
consider the vectors $Y_i{{\zq}}{\in}{V}_{(\Fb)}.$ By the identification $V{\simeq}\K^n,$ 
\begin{equation}\label{eq-t-6-21}
\Phi_{\Ki}(\zq)=(Y_1\zq,\hdots,Y_m\zq)
\in
(\K[\zq_1,\hdots,\zq_n])^{n\times{m}},
\end{equation}
is a matrix of
first order homogeneous  polynomials in $\K[\zq_1,\hdots,\zq_n]$ 
and, after choosing generators $X_1,\hdots,X_N$ of $\ot(V)$ as a $\K$-linear space,
 we take 
\begin{equation}\label{equ-tan-4.16}
M_2(\Ki,\zq)=(X_1\Phi_{\Ki}(\zq),\hdots,X_N\Phi_{\Ki}(\zq))\in
(\K[\zq_1,\hdots,\zq_n])^{n\times(mN)}.
\end{equation}

The $\ot(V)$-orbit of a non zero vector $\zq$ of $\Fb^n$ 
spans the hyperplane 
\begin{equation*}
\zq^\perp=\{\wq\in\Fb^n\mid \zq^{\intercal}{}\,
\wq=0\}.
\end{equation*} 
To check this fact, we can reduce to the case where $\Fb{=}\K.$ 
Take any $\uq{\in}V$ with $\zq^{\intercal}\uq{=}1.$ If $\wq{\in}\zq^\perp,$ then the matrix 
$X{=}\wq\,\uq^\intercal{-}\uq\,\wq^{\intercal}$ belongs to $\ot(V)$ and
$X\,\zq{=}\wq.$ This shows that $\zq^{\perp}{\subset}\,\ot(V)\,\zq.$ The opposite
inclusion is obvious, since $\zq^{\intercal}X\zq{=}0$ for all $X{\in}\ot(V)$ and $\zq{\in}V.$
\par 
Hence, if $\zq_1,\zq_2{\in}V_{(\Fb)}$ are
linearly independent, then $\ot(n)\,\zq_1{+}\ot(n)\,\zq_2$ spans $V_{(\Fb)},$ 
so that
$M_2(\Ki,\zq)$ has rank $n$ for all
$\zq$ for which $\Phi_{\Ki}(\zq)$ has rank ${\geq}{2}.$  
We proved the following 
\begin{prop} \label{prop-t-6-10}
Let $\Ki_{\;-2}\subset\ft_{-2}(V)$ and $\Ki=\Ki_{\;-2}\oplus\ft_{[3]}(V).$ Then
the maximal $\gl_{\K}(V)$-prolongation 
$\gt(\Ki)$ 
of $\mt(\Ki)$
is finite dimensional if and only if 
\begin{equation}\vspace{-20pt}
\{\zq\in{V}_{(\Fb)}\mid \rank(\Phi_{\Ki}(\zq))<{2}\}=\{0\}.
\end{equation}\qed
\end{prop}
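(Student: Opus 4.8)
The plan is to chase the finiteness of $\gt(\Ki)$ through the reductions of \S\ref{sect2}--\S\ref{sec5a} down to a rank condition on an explicit matrix, and then to trade that matrix for $\Phi_\Ki(\zq)$ using the elementary geometry of the orthogonal group acting on $V_{(\Fb)}$. Concretely, I would argue as follows. Taking $\Li=\gl_\K(V)$ and $\nt=\gt_{-2}(\Ki)$, Theorem~\ref{thm-t-4-2} tells us that $\gt(\Ki)$ is finite dimensional iff $\at(\Ki)$ is, and Theorem~\ref{thm-t-4-4} identifies $\at(\Ki)$ (apart from its degree $-1$ piece) with the maximal effective prolongation of type $\at_0(\Ki)$ of the abelian Lie algebra $V$. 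By Theorem~\ref{thm-t-4-6}, this prolongation is finite dimensional iff $\rank M(\zq)=n$ for every $\zq\in V_{(\Fb)}\setminus\{0\}$, where $M(\zq)$ is formed from a generating system of the annihilator $\at_0^0(\Ki)$. Lemma~\ref{lem-t-6-14} exhibits $\at_0^0(\Ki)$ as the linear span of the products $X_iY_j$, with $X_1,\dots,X_N$ a basis of $\ot(V)$ and $Y_1,\dots,Y_m$ generators of $\Ki_{\;-2}^\perp$; so we may take $M(\zq)=M_2(\Ki,\zq)$ as in \eqref{equ-tan-4.16}, and everything comes down to proving that, for $\zq\neq0$, $\rank M_2(\Ki,\zq)=n$ exactly when $\rank\Phi_\Ki(\zq)\geq2$.

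The core of the argument is the observation (already recorded above) that the $\ot(V)$-orbit of any nonzero $\zq\in V_{(\Fb)}$ spans the hyperplane $\zq^\perp$: the inclusion $\ot(V)\,\zq\subseteq\zq^\perp$ is immediate from $\zq^\intercal X\zq=0$, and the reverse one follows by writing, for $\wq\in\zq^\perp$ and $\uq$ with $\zq^\intercal\uq=1$, the skew matrix $X=\wq\,\uq^\intercal-\uq\,\wq^\intercal\in\ot(V)$ with $X\zq=\wq$. Granting this, if $\rank\Phi_\Ki(\zq)<2$ then all columns $Y_j\zq$ of $\Phi_\Ki(\zq)$ lie on a single line $\langle\wq\rangle$, so every column $X_iY_j\zq$ of $M_2(\Ki,\zq)$ is a scalar multiple of $X_i\wq$ and hence lies in $\wq^\perp$; thus $\rank M_2(\Ki,\zq)\leq n-1$. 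Conversely, if $Y_{j_1}\zq$ and $Y_{j_2}\zq$ are linearly independent, the columns $X_iY_{j_1}\zq$ span the hyperplane $(Y_{j_1}\zq)^\perp$, the columns $X_iY_{j_2}\zq$ span $(Y_{j_2}\zq)^\perp$, and these two distinct hyperplanes together span all of $V_{(\Fb)}$, so $\rank M_2(\Ki,\zq)=n$. Stringing together the chain ``$\gt(\Ki)$ finite $\iff$ $\at(\Ki)$ finite $\iff$ $\rank M_2(\Ki,\zq)=n$ for all $\zq\in V_{(\Fb)}\setminus\{0\}$ $\iff$ $\rank\Phi_\Ki(\zq)\geq2$ for all such $\zq$ $\iff$ $\{\zq\in V_{(\Fb)}\mid\rank\Phi_\Ki(\zq)<2\}=\{0\}$'' then gives the statement.

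I do not anticipate a genuine obstacle here; the two places that call for a little care are, first, reading Lemma~\ref{lem-t-6-14} as a description of a \emph{linear span} (so that $M_2(\Ki,\zq)$ is a legitimate presentation matrix in the sense of Theorem~\ref{thm-t-4-6}), and second, making sure the orbit computation, which is naturally phrased over $\K$, persists after base change to $\Fb$ --- which it does, since the symmetric form $\bq$ stays nondegenerate over the algebraically closed field and $\ot(V_{(\Fb)})=\Fb\otimes_\K\ot(V)$.
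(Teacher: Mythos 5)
Your proposal is correct and follows essentially the same route as the paper: the reduction via Theorems~\ref{thm-t-4-2}/\ref{thm-t-4-4} and \ref{thm-t-4-6} to the rank condition on $M_2(\Ki,\zq)$, Lemma~\ref{lem-t-6-14} for $\at_0^0(\Ki)$, and the key observation that the $\ot(V)$-orbit of a nonzero $\zq$ spans $\zq^\perp$ (with the same explicit skew matrix $\wq\,\uq^\intercal-\uq\,\wq^\intercal$). Your explicit treatment of the converse direction (rank $\Phi_\Ki(\zq)<2$ forces all columns of $M_2(\Ki,\zq)$ into a single hyperplane $\wq^\perp$) is a small but welcome completion of a step the paper leaves implicit.
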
 
We can give an 
equivalent formulation of Proposition~\ref{prop-t-6-10} 
involving the \textit{rank} of the $\Fb$-bilinear extension of the
alternate bilinear form on $V$ defined by the Lie brackets.
\begin{defn} Let $\Fb$ be the algebraic closure of $\K.$ We call the integer 
\begin{equation}
 \ell=\inf\{\dim_{\Fb}([\zq,V_{(\Fb)}])\mid 0\neq\zq\in{V}_{(\Fb)}\}
\end{equation}
 the \emph{algebraic minimum rank} of $\mt(\Ki).$ 
\end{defn}
\begin{thm}\label{thm-tan-4-8}
Let $\Ki_{\;-2}\subset\ft_{-2}(V)$ and $\Ki=\Ki_{\;-2}\oplus\ft_{[3]}(V).$ Then
 the maximal $\gl_{\K}(V)$-prolongation $\gt(\Ki)$ of $\mt(\Ki)$ is finite dimensional if and only if $\mt(\Ki)$
has algebraic minimum rank $\ell\geq{2}.$ 
\end{thm}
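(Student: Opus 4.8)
The plan is to reduce everything to Proposition~\ref{prop-t-6-10}, which already says that $\gt(\Ki)$ is finite dimensional if and only if $\rank(\Phi_{\Ki}(\zq)){\geq}2$ for every nonzero $\zq\in{V}_{(\Fb)}.$ Since the algebraic minimum rank is defined by $\ell{=}\inf\{\dim_{\Fb}([\zq,V_{(\Fb)}])\mid 0{\neq}\zq\in{V}_{(\Fb)}\},$ the whole theorem will follow once I establish the pointwise identity
\begin{equation*}
\rank(\Phi_{\Ki}(\zq))=\dim_{\Fb}\big([\zq,V_{(\Fb)}]\big),\qquad\forall\,\zq\in{V}_{(\Fb)}.
\end{equation*}
Indeed, this identity turns ``$\ell{\geq}2$'' into ``$\rank(\Phi_{\Ki}(\zq)){\geq}2$ for all $\zq{\neq}0$'', i.e. into the condition $\{\zq\mid\rank(\Phi_{\Ki}(\zq)){<}2\}{=}\{0\}$ of Proposition~\ref{prop-t-6-10}.

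First I would reduce to $\K{=}\Fb$ by extension of scalars, noting that $\Ki_{\;-2}^\perp,$ the bracket, and the form $\bq$ all base change compatibly. Next I would rewrite both sides of the desired identity as ranks of explicit linear maps: the left-hand side equals $\dim(\Ki_{\;-2}^\perp{\cdot}\zq),$ because the columns of $\Phi_{\Ki}(\zq){=}(Y_1\zq,\hdots,Y_m\zq)$ span exactly $\{Y\zq\mid Y\in\Ki_{\;-2}^\perp\}$ once $Y_1,\hdots,Y_m$ generate $\Ki_{\;-2}^\perp$; the right-hand side is the rank of $\ad(\zq)\colon\wq\mapsto[\zq,\wq],$ viewed as a map $V\to\gt_{-2}(\Ki).$ Since a linear map and its transpose have the same rank, it is enough to identify the transpose of $\ad(\zq)$ with a nonzero scalar multiple of the map $Y\mapsto Y\zq$ from $\Ki_{\;-2}^\perp$ to $V.$

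The key computation, and the step I expect to be the only delicate point, is this identification of transposes. Using the isomorphism $\rhoup\colon\Lambda^2(V)\to\ot(V)$ one has $\gt_{-2}(\Ki){\simeq}\ot(V){/}\rhoup(\Ki_{\;-2}),$ whose dual under the nondegenerate trace form is precisely $\Ki_{\;-2}^\perp{\subseteq}\ot(V)$; dually one identifies $V^*$ with $V$ via $\bq.$ Then for $Y\in\Ki_{\;-2}^\perp$ and $\wq\in{V},$ expanding $\rhoup(\zq{\wedge}\wq){=}\zq\wq^{\intercal}{-}\wq\zq^{\intercal}$ and using the antisymmetry $Y^{\intercal}{=}{-}Y$ gives $\langle[\zq,\wq]\mid Y\rangle{=}\trac\big((\zq\wq^{\intercal}{-}\wq\zq^{\intercal})Y\big){=}2\,\wq^{\intercal}Y\zq{=}2\,\bq(\wq,Y\zq),$ so the transpose of $\ad(\zq)$ really is $Y\mapsto 2\,Y\zq$ and has image $\Ki_{\;-2}^\perp{\cdot}\zq.$ From here the identity $\rank(\Phi_{\Ki}(\zq)){=}\dim(\Ki_{\;-2}^\perp{\cdot}\zq){=}\rank(\ad(\zq)){=}\dim_{\Fb}([\zq,V_{(\Fb)}])$ drops out, and combining with Proposition~\ref{prop-t-6-10} finishes the argument. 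The only things to be careful about are the signs and the harmless factor $2$ in the trace computation, and making sure the trace-form duality between $\gt_{-2}(\Ki)$ and $\Ki_{\;-2}^\perp$ is exactly the one compatible with the pairing induced by the bracket; everything else is formal linear algebra plus the already-proved Proposition~\ref{prop-t-6-10}.
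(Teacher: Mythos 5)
Your proposal is correct and follows essentially the same route as the paper: both reduce to Proposition~\ref{prop-t-6-10} by identifying $\rank(\Phi_{\Ki}(\zq))$ with $\dim_{\Fb}([\zq,V_{(\Fb)}])$ via the trace-form duality between $\gt_{-2}(\Ki)$ and $\Ki_{\;-2}^\perp$, the paper recording this as the one-line observation that $\wq^{\intercal}Y_i\zq$ is the $Y_i$-component of $[\wq,\zq].$ Your version just makes the transpose identification (and the harmless factor $2$) explicit.
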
 
\begin{proof} 
If we identify $\zq\,{\in}{V}_{(\Fb)}$ with the corresponding numerical vector in $\Fb^n,$ 
 then $\wq^{\intercal}Y_i\zq$ is the $Y_i$-component of $[\wq,\zq].$ 
 The condition that the minimum rank of $\mt(\Ki)$ is larger or equal to two is then equivalent to the fact that
 $\Phi_{\Ki}(\zq)$ has rank ${\geq}2$  for all $\zq\in{V}_{(\Fb)}.$ 
\end{proof}
\begin{exam} Let $\K=\R,$ 
$n{=}4$ and $\Ki=\left.\left\langle 
\left(\begin{smallmatrix}
 0 & A\\
 {-}A & 0
\end{smallmatrix}\right)\right| A\in\R^{2\times{2}}, A=A^\intercal\right\rangle.$\par
 With $\Jd{=} 
\left(\begin{smallmatrix}
 0 & 1\\
 {-}1 & 0
\end{smallmatrix}\right),$ a basis of $\Ki^\perp$ is given by the matrices 
\begin{equation*}
 Y_1= 
\begin{pmatrix}
 \Jd & 0\\
 0 & 0
\end{pmatrix},\;\; Y_2= 
\begin{pmatrix}
 0 & 0\\
 0 & \Jd
\end{pmatrix},\;\; Y_3=   
\begin{pmatrix}
 0 & \Jd\\
 \Jd & 0
\end{pmatrix}.
\end{equation*}
 This yields 
\begin{equation*}
 \Phi_{\Ki}(\zq) = 
\begin{pmatrix}
 \zq_2 & 0 & \zq_4\\
 {-}\zq_1 & 0 & {-}\zq_3\\
 0 & \zq_4 & \zq_2\\
 0 & {-}\zq_3 & {-}\zq_1
\end{pmatrix}.
\end{equation*}
Writing $\Delta^{i,j}_{h,k}$ for the minor of the lines $i,j$ and the columns $h,k$, we obtain 
\begin{equation*}
\Delta^{2,4}_{1,3}=\zq_1^2,\; \; \Delta^{1,3}_{1,3}=\zq_2^2,\;\;\Delta^{2,4}_{1,3}=\zq_3^2,\;\;
 \Delta^{1,3}_{2,3}=\zq^2_4,
\end{equation*}
showing that $\gt(\Ki)$ is finite dimensional.
\end{exam}
\begin{exam} Let $\K=\R,$ 
 $n{=}4$ and $\Ki^\perp$ generated by the matrices 
\begin{equation*}
 Y_1=
\begin{pmatrix}
 0 & 0 & 1 & 0\\
 0 & 0 & 0 & 1\\
 {-}1 & 0 & 0 & 0\\
 0 & {-}1 & 0 & 0
\end{pmatrix},\; Y_2= 
\begin{pmatrix}
 0 & 0 & 0 & 1\\
 0 & 0 & {-}1 & 0\\
 0 & 1 & 0 & 0\\
 {-}1 & 0 & 0 & 0
\end{pmatrix},\; Y_3= 
\begin{pmatrix}
 0 & 1 & 0 & 0 \\
 {-}1 & 0 & 0 & 0 \\
 0 & 0 & 0 & 0\\
 0 & 0 & 0 & 0
\end{pmatrix}.
\end{equation*}
Then 
\begin{equation*}
 \Phi_{\Ki}(\zq)= 
\begin{pmatrix}
 \zq_3 & \zq_4 & \zq_2\\
 \zq_4 & {-}\zq_3 & {-}\zq_1\\
 {-}\zq_1 & \zq_2 & 0 \\
 {-}\zq_2 & {-}\zq_1 & 0 
\end{pmatrix}.
\end{equation*}
The matrix $\Phi_{\Ki}(\zq)$ has rank ${\geq}2$ for all nonzero $\zq\in\R^4,$ but $$\Phi_{\Ki}(0,0,1,\iq)=\left( 
\begin{matrix}
 1 & \iq & 0\\
 \iq & {-}1 & 0\\
 0 & 0 & 0\\
 0 & 0 & 0
\end{matrix}\right)$$ has rank $1$.
Thus $\gt(\Ki)$ is infinite dimensional, although $\mt(\Ki)$ has (real) minimal rank $2.$
\end{exam}
We note that, if $\Phi_{\Ki}(\zq)$ has at most $2$ columns, then the set 
$$\{\zq{\in}V_{(\Fb)}{\mid} \rank(\Phi_{\Ki}(\zq)){<}2\}$$
is an algebraic affine variety of positive dimension in $V_{(\Fb)}.$ In particular, we obtain 
(cf. e.g. \cite{Kruglikov2011})
\begin{cor}\label{cor-tan-4-11}
 If $\dim(\gt_{-2}(\Ki)){\leq}{2},$ then $\gt(\Ki)$ is infinite dimensional. \qed
\end{cor}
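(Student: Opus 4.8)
The plan is to produce a single nonzero vector $\zq\in{V}_{(\Fb)}$ at which the matrix $\Phi_{\Ki}(\zq)$ of \eqref{eq-t-6-21} drops rank below $2$; Proposition~\ref{prop-t-6-10} then yields at once that $\gt(\Ki)$ is infinite dimensional. Equivalently, one may phrase the conclusion through Theorem~\ref{thm-tan-4-8}, saying that $\mt(\Ki)$ has algebraic minimum rank $\ell\leq{1}.$

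First I would set $m=\dim(\Ki^\perp)=\dim(\gt_{-2}(\Ki))\leq{2}$ and dispose of the case $m\leq{1}$: choosing a basis $Y_1,\hdots,Y_m$ of $\Ki^\perp$, the matrix $\Phi_{\Ki}(\zq)$ has at most one column, hence $\rank(\Phi_{\Ki}(\zq))\leq{1}$ for every $\zq$, and the claim is immediate from Proposition~\ref{prop-t-6-10}. (Since $\mt(\Ki)$ is of the second kind one in fact has $m=1$ here, but the argument does not use this.)

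So assume $m=2$ and fix a basis $\{Y_1,Y_2\}$ of $\Ki^\perp$, which, under the identification $\Lambda^2(V)\simeq\ot(V)$, consists of skew-symmetric matrices acting on $V_{(\Fb)}$. Consider the binary form
\begin{equation*}
 p(a,b)=\det(a\,Y_1+b\,Y_2)\in\Fb[a,b],
\end{equation*}
which is homogeneous of degree $n=\dim(V)$ in $(a,b)$. Because $\Fb$ is algebraically closed, $p$ has a nontrivial zero: if $p\equiv{0}$ this is clear, and otherwise $p$ splits into linear factors over $\Fb$, so it vanishes at some $(a_0,b_0)\neq(0,0)$. Then $M_0=a_0Y_1+b_0Y_2$ is singular on $V_{(\Fb)}$, and I would pick $0\neq\zq\in\ker(M_0)$. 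For this $\zq$ one has $a_0\,(Y_1\zq)+b_0\,(Y_2\zq)=0$ with $(a_0,b_0)\neq(0,0)$, so the two columns $Y_1\zq,Y_2\zq$ of $\Phi_{\Ki}(\zq)$ are linearly dependent, i.e.\ $\rank(\Phi_{\Ki}(\zq))<2$. Proposition~\ref{prop-t-6-10} then finishes the proof.

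The only genuine point is the existence of a singular member in the pencil $\{a\,Y_1+b\,Y_2\}$, which is a one-line consequence of the algebraic closedness of $\Fb$ (a binary form always has a projective root); everything else — the reduction to $m=2$, the translation of ``$\rank<2$'' into linear dependence of the columns $Y_i\zq$, and the degenerate subcase $p\equiv{0}$ — is routine bookkeeping and presents no real obstacle.
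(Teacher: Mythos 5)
Your proof is correct. Both you and the paper reduce the corollary to Proposition~\ref{prop-t-6-10}, i.e.\ to exhibiting a nonzero $\zq\in V_{(\Fb)}$ with $\rank(\Phi_{\Ki}(\zq))<2$; the difference lies in how that point is produced. The paper disposes of this in one line by observing that, when $\Phi_{\Ki}(\zq)$ has at most two columns, the locus $\{\zq\in V_{(\Fb)}\mid \rank(\Phi_{\Ki}(\zq))<2\}$ is an affine variety of positive dimension --- implicitly a determinantal dimension count: it is the preimage, under the linear map $\zq\mapsto(Y_1\zq,Y_2\zq)$, of the rank-$\leq 1$ locus in the space of $n\times 2$ matrices, which has codimension $n-1$, so every component of the preimage has dimension at least $1$. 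You instead argue through the pencil $a\,Y_1+b\,Y_2$: the binary form $\det(a\,Y_1+b\,Y_2)$, homogeneous of degree $n$ over the algebraically closed field $\Fb$, has a projective root $(a_0,b_0)$, and any nonzero kernel vector $\zq$ of $a_0Y_1+b_0Y_2$ gives the required linear dependence of the columns $Y_1\zq,Y_2\zq$. Your route is more elementary and fully explicit (it needs only that a binary form splits over $\Fb$, and does not invoke the dimension theory of determinantal varieties, which the paper leaves unjustified), and it even describes how to locate the degenerate $\zq$; the paper's phrasing, on the other hand, records the slightly stronger geometric fact that the degenerate locus is positive-dimensional, not merely nonempty. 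Your separate treatment of the case $\dim(\gt_{-2}(\Ki))\leq 1$ and of the identically vanishing pencil determinant (automatic for odd $n$, since the $Y_i$ are skew-symmetric) is accurate and complete.
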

\section{$\gl_{\K}(V)$-prolongations of 
FGLA's
of higher kind}\label{sect7}
Let $V$ be a $\K$-vector space of finite dimension
$n{\geq}2$ and,  
for an integer $\muup\geq{3},$ 
fix a $\muup$-cofinite $\Z$-graded ideal $\Ki{\subset}\ft_{[2]}(V)$ of  
$\ft(V).$ 
Let
\begin{equation}
\mt(\Ki){=}{\sum}_{\pq=-1}^{-\muup}\gt_{\pq}(\Ki)\simeq\ft(V){/}\Ki , \quad   
\nt(\Ki){=}{\sum}_{\pq={-}{2}}^{-\muup}\gt_{{\pq}}(\Ki)\simeq\ft_{[2]}/\Ki.
\end{equation}
The canonical  maximal $\Z$-graded effective 
$\gl_{\K}(V)$-prolongation 
\begin{equation*}
 \gt(\Ki)={\sum}_{\pq\geq{-}\muup}\gt_{\pq}(\Ki)
\end{equation*}
of $\mt(\Ki)$ is the quotient by $\Ki$ of its normaliser $\Nt(\Ki)$ in $\Ft(V).$ 
By Tanaka's criterion (Theorem~\ref{thm-t-4-4}), $\gt(\Ki)$ is finite dimensional if, and only if, 
\begin{align*}
 \at(\Ki)={\sum}_{\pq\geq{-}1}\at_{\pq}(\Ki),\quad\!\!\text{with}\; \begin{cases}\at_{-1} =\mt(\Ki){/}\nt(\Ki)\simeq{V},
 \\ \at_{\pq}(\Ki) {=}
 \{X{\in}\gt_{\pq}(\Ki)\,{\mid}\, [X,\nt(\Ki)]{=}\{0\}\}\,\;\text{for}\; \pq{\geq}0,\end{cases}
\end{align*}
is finite dimensional.
As in \S\ref{sec5} we get (see also Lemma~\ref{lem-tan-4-2}):
\begin{prop}\label{prop-t-6-3} 
Let $\Ki$ be a $\Z$-graded
ideal of $\ft(V),$ contained in $\ft_{[2]}.$
Then
\begin{equation}\vspace{-18pt}
\begin{cases}
\gt_0(\Ki)=\{A\in\gl_{\R}(V)\mid T_A(\Ki)\subset\Ki\},\\
 \at_0(\Ki)=\{A\in\gl_{\R}(V)\mid T_A(\ft_{-\pq}(V))\subseteq\Ki_{\; -\pq},\;\forall\pq\,{\geq}{2}\}.
 \end{cases} 
\end{equation}
\qed
\end{prop}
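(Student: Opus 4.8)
The plan is to read both formulas directly off the definitions set up in \S\ref{sec-fund}--\S\ref{sect2}, supplemented by one elementary remark about graded derivations. I expect the argument to be essentially formal, so the only ``difficulty'' is keeping the bookkeeping of the $\Z$-gradings straight; there is no serious obstacle to anticipate.

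For $\gt_0(\Ki),$ I would specialise \eqref{eq-ta-1-12a} to $\Li{=}\gl_{\K}(V)$ in degree $\pq{=}0.$ By \eqref{eq-ta-1-12b} the degree-zero component of $\Ft(V,\gl_{\K}(V))$ is all of $\gl_{\K}(V),$ and by the construction of $\tilde{\ft}(V)$ recalled before Proposition~\ref{prop1.6} the Lie bracket of $A\in\gl_{\K}(V)$ with $X\in\ft(V)$ is exactly $T_A(X).$ Hence \eqref{eq-ta-1-12a} reads $\gt_0(\Ki){=}\{A\in\gl_{\K}(V)\mid [A,\Ki]\subseteq\Ki\}{=}\{A\in\gl_{\K}(V)\mid T_A(\Ki)\subseteq\Ki\},$ which is the first identity; nothing beyond unwinding the definitions is required.

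For $\at_0(\Ki),$ the starting point is Lemma~\ref{lem-tan-4-2}, which with $\Li{=}\gl_{\K}(V)$ already yields $\at_0(\Ki){=}\{A\in\gl_{\K}(V)\mid T_A(\ft_{[2]}(V))\subseteq\Ki\}.$ What remains is to split this single inclusion into the family of componentwise inclusions $T_A(\ft_{-\pq}(V))\subseteq\Ki_{\;-\pq}$ for all $\pq\geq 2.$ The remark I would invoke is that $A\in\gl_{\K}(V){=}\ft_0(V)$ operates on $\T(V),$ hence on the invariant subalgebra $\ft(V),$ as a derivation of degree zero, so $T_A$ carries each $\ft_{-\pq}(V)$ into itself. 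Then, using the graded decomposition $\ft_{[2]}(V){=}\bigoplus_{\pq\geq 2}\ft_{-\pq}(V)$ and, since $\Ki\subseteq\ft_{[2]}(V)$ is a $\Z$-graded ideal, $\Ki{=}\bigoplus_{\pq\geq 2}\Ki_{\;-\pq}$ with $\Ki_{\;-\pq}{=}\Ki\cap\ft_{-\pq}(V),$ one sees that the inclusion ${\sum}_{\pq\geq 2}T_A(\ft_{-\pq}(V))\subseteq\Ki$ holds if and only if each summand $T_A(\ft_{-\pq}(V)),$ being homogeneous of degree $-\pq,$ lands in $\Ki\cap\ft_{-\pq}(V){=}\Ki_{\;-\pq}.$ This gives the second identity.

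The only step where a reader might pause is this last equivalence, but the directness of the two graded decompositions makes it immediate, so I do not foresee a genuine obstacle. The statement is phrased over $\R,$ yet the argument uses nothing more than $\mathrm{char}\,\K{=}0$ (which is what makes $\ft(V)$ a $\GL_{\K}(V)$-invariant subspace of $\T(V)$), so it holds verbatim for any such ground field $\K.$
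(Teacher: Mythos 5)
Your argument is correct and follows exactly the route the paper intends: the proposition is stated there without proof, with a pointer to \eqref{eq-ta-1-12a} and Lemma~\ref{lem-tan-4-2}, and your write-up simply fills in those two references together with the routine observation that $T_A$ is a degree-zero derivation, so the inclusion $T_A(\ft_{[2]}(V))\subseteq\Ki$ splits componentwise against the graded decompositions of $\ft_{[2]}(V)$ and $\Ki$. Your closing remark that the $\gl_{\R}(V)$ in the statement should read $\gl_{\K}(V)$ is also right.
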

Set (brackets are computed in $\ft(V)$) 
\begin{equation}
 W(\Ki)=\{\vq{\in}V\mid [\vq,\ft_{[2]}(V)]\subset\Ki\}.
\end{equation}  
\begin{prop}\label{prop-t-6-5}
We have the following characterisation: 
\begin{equation}\label{eq-5.4}
\at_0(\Ki)=\{A\in\gl_{\K}(V)\mid T_A(\ft_{{-}2}(V))\subseteq\Ki_{\;-2},\;
A(V)\subseteq{W}(\Ki)\}.
\end{equation}
\end{prop}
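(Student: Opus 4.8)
The plan is to derive the formula directly from Proposition~\ref{prop-t-6-3}, which already identifies $\at_0(\Ki)$ with the set of $A\in\gl_{\K}(V)$ such that $T_A(\ft_{-\pq}(V))\subseteq\Ki_{-\pq}$ for every $\pq\geq 2$. Thus the only thing left to prove is that, for an $A$ which is known to satisfy $T_A(\ft_{-2}(V))\subseteq\Ki_{-2}$, the whole family of conditions $T_A(\ft_{-\pq}(V))\subseteq\Ki_{-\pq}$, $\pq\geq 2$, is equivalent to the single further condition $A(V)\subseteq W(\Ki)$. Throughout I would use two elementary facts: that $T_A$ is a degree-zero derivation of $\ft(V)$, so $T_A([\xq,\vq])=[T_A\xq,\vq]+[\xq,A\vq]$, and that, since $\ft(V)$ is generated by $V$, one has $\ft_{-\pq}(V)=[\ft_{-(\pq-1)}(V),V]$ for all $\pq\geq 2$ (for $\pq=2$ this is $[V,V]=\ft_{-2}(V)$).

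For the inclusion $\supseteq$ in \eqref{eq-5.4}, take $A$ with $T_A(\ft_{-2}(V))\subseteq\Ki_{-2}$ and $A(V)\subseteq W(\Ki)$; I would prove $T_A(\ft_{-\pq}(V))\subseteq\Ki_{-\pq}$ for all $\pq\geq 2$ by induction on $\pq$, which by Proposition~\ref{prop-t-6-3} gives $A\in\at_0(\Ki)$. The base case $\pq=2$ is the hypothesis. In the inductive step with $\pq\geq 3$, write a generator of $\ft_{-\pq}(V)$ as $[\xq,\vq]$ with $\xq\in\ft_{-(\pq-1)}(V)$, $\vq\in V$. Then $[T_A\xq,\vq]\in[\Ki_{-(\pq-1)},V]\subseteq\Ki_{-\pq}$ by the inductive hypothesis and the fact that $\Ki$ is an ideal, while $[\xq,A\vq]=-[A\vq,\xq]$ lies in $\Ki$ because $\xq\in\ft_{[2]}(V)$ (here $\pq-1\geq 2$ is used) and $A\vq\in W(\Ki)$; by the grading this bracket sits in $\Ki_{-\pq}$. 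Hence $T_A([\xq,\vq])\in\Ki_{-\pq}$, and since such elements span $\ft_{-\pq}(V)$ the step is complete.

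For the inclusion $\subseteq$, start from $A\in\at_0(\Ki)$, so in particular $T_A(\ft_{-2}(V))\subseteq\Ki_{-2}$; it remains to show $A(V)\subseteq W(\Ki)$. Fix $\vq\in V$ and $\xq\in\ft_{-\pq}(V)$ with $\pq\geq 2$, and apply the derivation $T_A$ to $[\vq,\xq]\in\ft_{-(\pq+1)}(V)$ to get
\[
[A\vq,\xq]=T_A([\vq,\xq])-[\vq,T_A\xq].
\]
Here $T_A([\vq,\xq])\in\Ki_{-(\pq+1)}$ since $\pq+1\geq 3$ and $A\in\at_0(\Ki)$, and $[\vq,T_A\xq]\in[V,\Ki_{-\pq}]\subseteq\Ki$ because $T_A\xq\in\Ki_{-\pq}$ and $\Ki$ is an ideal. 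Therefore $[A\vq,\xq]\in\Ki$. Letting $\xq$ range over $\ft_{-\pq}(V)$ and $\pq$ over $\{2,3,\dots\}$, and using $\ft_{[2]}(V)={\sum}_{\pq\geq 2}\ft_{-\pq}(V)$, we conclude $[A\vq,\ft_{[2]}(V)]\subset\Ki$, i.e.\ $A\vq\in W(\Ki)$.

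Both directions are routine manipulations with the derivation identity and the ideal property of $\Ki$, so I do not anticipate a genuine obstacle. The only point requiring care is bookkeeping with the gradation, so that every bracket to which $T_A$ is applied, or which is claimed to lie in $\Ki$, genuinely has degree $\leq-2$ (respectively $\leq-3$): this is precisely why it suffices to control $A$ on $\ft_{-2}(V)$ and on $V$, the remaining higher-degree conditions of Proposition~\ref{prop-t-6-3} becoming automatic once $A(V)\subseteq W(\Ki)$.
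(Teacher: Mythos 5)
Your proof is correct and follows essentially the same route as the paper: both directions rest on the derivation identity $T_A([\vq,X])=[A(\vq),X]+[\vq,T_A(X)]$, with an induction on the degree (using that $V$ generates $\ft(V)$ and that $\Ki$ is a graded ideal) for the inclusion $\supseteq$, and a rearrangement of the same identity for $\subseteq$. The only difference is cosmetic (you bracket generators as $[\xq,\vq]$ rather than $[\vq,X]$), and your bookkeeping of degrees matches the paper's.
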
 
\begin{proof}
We use the characterisation in Proposition~\ref{prop-t-6-3} and the fact that
$V$ generates $\ft(V).$ 
Let us denote by $\bt$ 
the right hand side of \eqref{eq-5.4}. \par
We first show that $\bt{\subseteq} \at_0(\Ki).$
To this aim, we need to check that, if $A{\in}\bt,$ then $T_A(\ft_{{-}\pq}){\subseteq}\Ki_{\;{-}{\pq}}$ for 
$\pq{\geq}2.$ This is true by assumption if $\pq{=}2.$ Then we argue by recurrence. We have 
\begin{equation}\label{eq-star5} \tag{$*$}
 T_A([\vq,X])=[A(\vq),X]+[\vq,T_A(X)], \;\;\;\forall A{\in}\gl_{\K}(V),\;\forall\vq{\in}V,\;\forall{X}{\in}\ft(V).
\end{equation}
If $T_A(\ft_{{-}\pq}){\subseteq}\Ki_{\;{-}{\pq}}$ for some $\pq{\geq}2,$ then, for $X{\in}\ft_{{-}\pq}(V)$
and $\vq{\in}V,$ 
the second summand $[\vq,T_A(X)]$
in the right hand side belongs to $\Ki_{\;{-}p{-}1}$ because $T_A(X){\in}\Ki_{\;{-}\pq}$ and 
$\Ki$ is an ideal. If $A{\in}\bt,$ then
$[A(\vq),X]$
belongs to $\Ki_{\;{-}\pq{-}1}$ by the assumption that $A(\vq){\in}W(\Ki).$ 
Since the elements of the form $[\vq,X]$ with $\vq{\in}V$ and $X{\in}\ft_{{-}\pq}(V)$ generate
$\ft_{{-}\pq{-}1}(V),$ this shows that $T_A(\ft_{{-}\pq{-}1}){\subseteq}\Ki_{\;{-}\pq{-}1},$ 
completing  the proof of the inclusion $\bt{\subseteq}\at_0(\Ki).$ \par 
Let us prove the opposite inclusion. 
If $A{\in}\at_0(\Ki),$ 
then both the left hand side and the second summand of the right hand side of \eqref{eq-star5}
belong to $\Ki_{\;{-}\pq{-}1}$ for all $\vq{\in}V$ and $X{\in}\ft_{{-}\pq}(V).$ 
This shows that $[A(\vq),\ft_{{-}\pq}(V)]{\subseteq}\Ki_{\;{-}\pq}$ for all $\pq{\geq}2$ and hence
that $A(V){\subseteq}W(\Ki).$ Thus
$\at_0(\Ki){\subseteq}\bt,$ completing the proof of the proposition.
\end{proof}
Let us fix a basis $\eq_1,\hdots,\eq_n$ whose first $m$ vectors $\eq_1,\hdots,\eq_m$ are a
basis of $W(\Ki).$ The matrices of the elements $A$ of $\gl_{\K}(V)$ that map $V$ into $W(\Ki)$
are of the form 
\begin{equation*}
 A= 
\begin{pmatrix}
 B  \\
 0 
\end{pmatrix},\;\;\text{with}\;\; B{\in}\K^{m{\times}n}.
\end{equation*}
Their orthogonal in $\gl_{\K}(V)$ consists of the matrices of the form 
\begin{equation*}
 Y = 
\begin{pmatrix}
 0 & C
\end{pmatrix},\;\;\text{with}\;\; C{\in}\K^{n\times(n{-}m)}.
\end{equation*}
Taking the canonical basis $Y_1,\hdots,Y_r,$ with $r{=}n{\times}(n{-}m),$ for the linear space of the
matrices of this form, we obtain a matrix 
\begin{equation}\label{eq-5.5}
 M_3(\Ki,\zq)=(\zq_{m{+}1}\Id_n,\hdots,\zq_n\Id_n).
\end{equation}
Since the orthogonal of an intersection of linear subspaces is the sum of their orthogonal
subspaces, the matrix of the form
\eqref{eq-3.12} associated to $\at_0(\Ki)$ is in this case 
\begin{equation}\label{eq-5.6}
M(\zq)=
 (M_2(\Ki,\zq),M_3(\Ki,\zq)).
\end{equation}
 Taking into account that $M_3(\Ki,\zq)$ is described by \eqref{eq-5.5}, the matrix
\eqref{eq-5.6} has rank $n$ whenever $(\zq_{m{+}1},\hdots,\zq_n){\neq}(0,\hdots,0).$ 
Let $\Phi_{\Ki}(\zq)$ the matrix defined by \eqref{eq-t-6-21}.
Then Theorem~\ref{thm-t-4-4} yields
\begin{thm} \label{thm-ta-7.3}
Assume that $\Ki$ is $\muup$-cofinite for some integer
$\muup{\geq}2$ and let $\eq_1,\hdots,\eq_m$ be a basis of $W(\Ki).$ Then  
$\gt(\Ki)$  is finite dimensional if and only if $\Phi_{\Ki}(\zq)$ has rank ${\geq}2$ 
for all $0{\neq}\zq{\in}W_{(\Fb)}=\Fb{\otimes}W(\Ki).$
 \qed
\end{thm}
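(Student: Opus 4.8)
The plan is to assemble the reductions already in place and then re-run the rank argument of \S\ref{sec5}.

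First I would invoke Theorem~\ref{thm-t-4-4} with $\Li=\gl_{\K}(V)$: since $\ft(V)/\ft_{[2]}(V)\simeq V$ is abelian and, by Lemma~\ref{lem-tan-4-2} together with Proposition~\ref{prop-t-6-5}, the degree-zero part $\at_0(\Ki,\gl_{\K}(V))$ coincides with the $\at_0(\Ki)$ of \eqref{eq-5.4}, the algebra $\gt(\Ki)$ is finite dimensional if and only if the maximal effective $\at_0(\Ki)$-prolongation of $V$ is. To this last prolongation I would apply Theorem~\ref{thm-t-4-6}: it is finite dimensional if and only if the matrix $M(\zq)$ of \eqref{eq-3.12} built from a set of generators of $\at_0^0(\Ki)$ has rank $n=\dim V$ for every nonzero $\zq\in V_{(\Fb)}$. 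Because $\at_0(\Ki)$ is the intersection of $\{A\mid T_A(\ft_{-2}(V))\subseteq\Ki_{\;-2}\}$ with $\{A\mid A(V)\subseteq W(\Ki)\}$, and the orthogonal of an intersection is the sum of the orthogonals, this matrix is exactly $M(\zq)=(M_2(\Ki,\zq),M_3(\Ki,\zq))$ of \eqref{eq-5.6}, where $M_3(\Ki,\zq)=(\zq_{m+1}\Id_n,\dots,\zq_n\Id_n)$ is written in a basis $\eq_1,\dots,\eq_n$ of $V$ with $\eq_1,\dots,\eq_m$ a basis of $W(\Ki)$.

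Next I would split on whether $\zq$ lies in $W_{(\Fb)}=\Fb\otimes W(\Ki)$. If $\zq\in V_{(\Fb)}\setminus W_{(\Fb)}$ then some coordinate $\zq_j$ with $j>m$ is nonzero, so $M_3(\Ki,\zq)$, hence $M(\zq)$, already has rank $n$ and no constraint on $\Ki$ is produced. If $0\neq\zq\in W_{(\Fb)}$ then $M_3(\Ki,\zq)=0$, so $\rank M(\zq)=\rank M_2(\Ki,\zq)$, and it remains to show that $\rank M_2(\Ki,\zq)=n$ if and only if $\rank\Phi_{\Ki}(\zq)\ge 2$. For this I would reuse the orthogonal-group argument preceding Proposition~\ref{prop-t-6-10}: writing $M_2(\Ki,\zq)=(X_1\Phi_{\Ki}(\zq),\dots,X_N\Phi_{\Ki}(\zq))$ with $X_1,\dots,X_N$ spanning $\ot(V)$, and using that the $\ot(V)$-orbit of a nonzero $\wq\in\Fb^n$ spans the hyperplane $\wq^\perp$. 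If $\rank\Phi_{\Ki}(\zq)\le 1$, all columns of $\Phi_{\Ki}(\zq)$ are proportional to a single $\wq$, so every column of $M_2(\Ki,\zq)$ lies in ${\sum}_i\ot(V)\wq=\wq^\perp$, whence $\rank M_2(\Ki,\zq)<n$; conversely, if two columns $\wq_1,\wq_2$ of $\Phi_{\Ki}(\zq)$ are linearly independent, then $\ot(V)\wq_1+\ot(V)\wq_2=\wq_1^\perp+\wq_2^\perp=V_{(\Fb)}$ and $\rank M_2(\Ki,\zq)=n$.

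Combining the steps above yields the statement. There is no deep obstacle, everything having been prepared in \S\ref{sec-fund}, \S\ref{sect2} and \S\ref{sec5}; the points that demand care are the identification of the correct matrix via Proposition~\ref{prop-t-6-5} together with the orthogonal-of-an-intersection identity, and the bookkeeping of the basis adapted to $W(\Ki)$ that makes the dichotomy $\zq\in W_{(\Fb)}$ versus $\zq\notin W_{(\Fb)}$ transparent. (For $\muup=2$ one has $W(\Ki)=V$, $M_3$ is empty, and the statement collapses to Proposition~\ref{prop-t-6-10}, which is a useful consistency check.)
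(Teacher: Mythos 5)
Your proposal is correct and follows the paper's own route essentially verbatim: reduce via Theorem~\ref{thm-t-4-4} and Proposition~\ref{prop-t-6-5} to the rank condition of Theorem~\ref{thm-t-4-6} for $M(\zq)=(M_2(\Ki,\zq),M_3(\Ki,\zq))$, dispose of $\zq\notin W_{(\Fb)}$ using $M_3$, and convert $\rank M_2(\Ki,\zq)=n$ into $\rank\Phi_{\Ki}(\zq)\geq 2$ by the fact that $\ot(V)\wq$ spans exactly $\wq^{\perp}$. The only difference is that you spell out the ``only if'' half of that last equivalence (all columns of $M_2$ landing in a single hyperplane when $\rank\Phi_{\Ki}(\zq)\leq 1$), which the paper leaves implicit in \S\ref{sec5}.
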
 
\section{$\Li$-prolongations of FGLA's of general kind}\label{sect8}
Let $\Li$ be a Lie subalgebra of $\gl_{\K}(V).$ Fix a set $A_1,\hdots,A_k$ 
of generators of 
\begin{equation*}
 \Li^\perp=\{A{\in}\gl_{\K}(V)\mid \trac(AX)=0,\;\forall X{\in}\Li\}
\end{equation*}
and form 
\begin{equation}
 M_1(\Li,\zq)=(A_1\zq,\hdots,A_k\zq),\;\;\zq\in{V}_{(\Fb)}.
\end{equation}
If we fix a  basis $\eq_1,\hdots,\eq_n$ of $V,$ we may consider $M(\zq)$ as a matrix 
of homogeneous first degree polynomials of $\zq{=}(\zq_1,\hdots,\zq_n)$ with coefficients in~$\K.$ \par 
Let 
$\mt={\sum}_{\pq=1}^\muup\gt_{{-}\pq}$ 
be an FGLA of finite kind $\muup{\geq}1$ and set $V=\gt_{{-}1}.$
Then $\mt$ is isomorphic to a quotient $\ft(V)/\Ki,$ for a graded ideal $\Ki$ of $\ft(V)$ with 
$\ft_{[\muup{+}1]}(V){\subseteq}\Ki{\subseteq}\ft_{[2]}(V).$
Let $\Li$ be a Lie subalgebra of $\gl_{\K}(\gt_{{-}1}).$    We showed in~\S\ref{sec-fund}
that the maximal effective prolongation of type $\Li$ of $\mt$ is the quotient
by $\Ki$ of its normaliser $\Nt(\Ki,\Li)$ in $\Ft(V,\Li).$ 
 \par 
 By Theorem~\ref{thm-t-4-4}, this maximal effective
$\Li$-prolongation $\gt(\Ki,\Li)$ of $\mt$ 
is finite dimensional if
 and only if the maximal effective $\at_0$-prolongation of $V$ 
 is finite dimensional, where $\at_0$ the Lie subalgebra  of $\Li$ defined by
\begin{equation}
 \at_0{=}\{A{\in}\Li \mid T_A(\ft_{[2]}(V))\subseteq\Ki\}.
\end{equation}
Since the orthogonal of an intersection of linear subspaces is the sum of their orthogonal
subspaces, 
we can use 
the results (and the notation)
of \S\ref{sec5a},\ref{sec5},\ref{sect7} to construct the matrix $M(\zq)$ of
Theorem~\ref{thm-t-4-6}, obtaining 
\begin{equation}\label{eq-6.3}
 M(\zq)=(M_1(\Li,\zq),M_2(\Ki,\zq),M_3(\Ki,\zq)). 
\end{equation}
Let $W{=}W(\Ki)$ be the subspace of $V$ defined in \S\ref{sect7}.
Then we obtain  
\begin{thm}\label{thm-tan-6.1}
Fix the basis $\eq_1,\hdots,\eq_n$ of $V$ in
such a way that the first $m$ vectors $\eq_1,\hdots,\eq_m$
form a basis of $W(\Ki).$ Then
 a necessary and sufficient condition in order that the maximal effective 
 $\Li$-prolongation of $\mt$ be finite dimensional is that 
\begin{equation*}
(M_1(\Li)(\zq),M_2(\Ki,\zq))
\end{equation*}
has rank $n$ for all $\zq\in{W}_{(\Fb)}{\setminus}\{0\}
=\{\zq\in\Fb^n{\setminus}\{0\}\mid \zq_i{=}0,\;\forall i{>}m\}.$ \qed
\end{thm}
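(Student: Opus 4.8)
The plan is to reduce to the first-kind case by Theorem~\ref{thm-t-4-4}, apply the rank criterion of Theorem~\ref{thm-t-4-6}, and carry out the reduction after writing the reduced structure algebra as an intersection of three explicit linear subspaces of $\gl_\K(V)$ whose trace-orthogonals are generated by the columns of $M_1(\Li,\zq)$, $M_2(\Ki,\zq)$ and $M_3(\Ki,\zq)$ respectively.

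First I would invoke Theorem~\ref{thm-t-4-4}: $\gt(\Ki,\Li)$ is finite dimensional if and only if the maximal effective $\at_0$-prolongation of $V$ is, where $\at_0{=}\{A{\in}\Li\mid T_A(\ft_{[2]}(V))\subseteq\Ki\}$. Since $T_A$ preserves the grading, the condition $T_A(\ft_{[2]}(V))\subseteq\Ki$ is equivalent to $T_A(\ft_{-\pq}(V))\subseteq\Ki_{\;-\pq}$ for all $\pq{\geq}2$, so by Proposition~\ref{prop-t-6-3} one has $\at_0=\Li\cap\at_0(\Ki)$, and by Proposition~\ref{prop-t-6-5} (trivially also for $\muup{\leq}2$, where $W(\Ki){=}V$ and the last constraint is vacuous),
\begin{equation*}
\at_0=\Li\,\cap\,\{A\in\gl_\K(V)\mid T_A(\ft_{-2}(V))\subseteq\Ki_{\;-2}\}\,\cap\,\{A\in\gl_\K(V)\mid A(V)\subseteq W(\Ki)\}.
\end{equation*}

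Next I would pass to trace-orthogonals. Since the trace form on $\gl_\K(V)$ is nondegenerate, the orthogonal of an intersection of subspaces is the sum of their orthogonals, so
\begin{equation*}
\at_0^0=\Li^\perp+\{A\mid T_A(\ft_{-2}(V))\subseteq\Ki_{\;-2}\}^0+\{A\mid A(V)\subseteq W(\Ki)\}^0 .
\end{equation*}
The three summands are precisely the ones analysed in \S\ref{sec5a}, \S\ref{sec5} and \S\ref{sect7}: choosing generators of $\Li^\perp$, of $\{A\mid T_A(\ft_{-2}(V))\subseteq\Ki_{\;-2}\}^0$ (described via Lemma~\ref{lem-t-6-14}) and of $\{A\mid A(V)\subseteq W(\Ki)\}^0$, and forming the associated matrix \eqref{eq-3.12}, yields — up to a reordering of columns, which leaves the rank unchanged — exactly $M(\zq)=(M_1(\Li,\zq),M_2(\Ki,\zq),M_3(\Ki,\zq))$ as in \eqref{eq-6.3}. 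Applying Theorem~\ref{thm-t-4-6} to $\at_0$, finiteness of $\gt(\Ki,\Li)$ becomes equivalent to $\rank(M(\zq))=n$ for all $\zq\in V_{(\Fb)}\setminus\{0\}$.

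Finally I would use the explicit shape of $M_3$. With the basis chosen so that $\eq_1,\hdots,\eq_m$ is a basis of $W(\Ki)$, formula \eqref{eq-5.5} gives $M_3(\Ki,\zq)=(\zq_{m+1}\Id_n,\hdots,\zq_n\Id_n)$, which already has rank $n$ whenever $(\zq_{m+1},\hdots,\zq_n)\neq 0$, i.e. whenever $\zq\notin W_{(\Fb)}$. Hence the rank condition is automatic off $W_{(\Fb)}$, while on $W_{(\Fb)}$ one has $M_3(\Ki,\zq)=0$, so it reduces to $\rank(M_1(\Li,\zq),M_2(\Ki,\zq))=n$ for all $\zq\in W_{(\Fb)}\setminus\{0\}$; since $W_{(\Fb)}=\{\zq\in\Fb^n\mid\zq_i{=}0,\ \forall i{>}m\}$, this is the asserted criterion. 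I expect the main obstacle to be the first step — establishing $\at_0=\Li\cap\at_0(\Ki)$ and the splitting of $\at_0(\Ki)$ via Proposition~\ref{prop-t-6-5}, together with the bookkeeping that the concatenated generator lists reproduce exactly the block matrix $(M_1,M_2,M_3)$; everything after that is formal given Theorems~\ref{thm-t-4-4} and~\ref{thm-t-4-6}.
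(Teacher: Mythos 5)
Your proposal is correct and follows essentially the same route as the paper: the paper's own ``proof'' is precisely the discussion preceding the statement in \S\ref{sect8}, namely the reduction via Theorem~\ref{thm-t-4-4} to the reduced algebra $\at_0=\{A\in\Li\mid T_A(\ft_{[2]}(V))\subseteq\Ki\}$, the decomposition of $\at_0$ as an intersection whose trace-orthogonal is the sum generating the columns of $(M_1,M_2,M_3)$, the application of Theorem~\ref{thm-t-4-6}, and the observation from \S\ref{sect7} that $M_3(\Ki,\zq)$ has rank $n$ off $W_{(\Fb)}$ and vanishes on it. Your added remarks (gradedness of $T_A$ to split the condition degree by degree, and the vacuity of the $W(\Ki)$ constraint for $\muup\leq 2$) only make explicit what the paper leaves implicit.
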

\begin{exam}
 Let $V$ be a finite dimensional $\K$-vector space of dimension $n{\geq}3$ 
 and $\bq:V{\times}V{\to}\K$ 
 a nondegenerate bilinear form on $V,$ having nonzero symmetric and antisymmetric 
 components $\bq_s$ and $\bq_a.$ Let 
\begin{equation*}
 \Li=\{X{\in}\gl_{\K}(V)\mid \exists \cq(X){\in}\K\;\text{s.t.}\;
 \bq(X\vq,\wq){+}\bq(\vq,X\wq){=}\cq(X)\bq(\vq,\wq),\;\forall\vq,\wq{\in}B\}.
\end{equation*}
 Then it is natural to take $\mt{=}\gt_{{-}1}{\oplus}\gt_{{-}2},$ with $\gt_{{-}1}{=}V$ and
 $\gt_{{-}2}{=}\K,$ defining the Lie brackets on $V$ by 
\begin{equation*}
 [\vq,\wq]=\bq_a(\vq,\wq),\;\;\forall\vq,\wq{\in}V.
\end{equation*}
Then $\mt{\simeq}\ft(V){/}\Ki,$ with $\Ki{=}\Ki_{{-}2}{\oplus}\ft_{[3]}$ for 
\begin{equation*}
\Ki_{{-}2}{=}\left.\left\{{\sum}\vq_i{\wedge}\wq_i{\in}\Lambda^2(V)\right|  
{\sum}\bq_a(\vq_i,\wq_i){=}0\right\}.
\end{equation*}
The maximal effective $\Li$-prolongation
$\gt{=}\gt(\Ki,\Li)$ of $\mt$ is equal to the maximal effective $\mathfrak{co}_{\bq_s}(V)$-prolongation
$\gt(\Ki,\mathfrak{co}_{\bq_s}(V))$ of $\mt.$ \par
In particular, $\gt$ is finite dimensional by Example~\ref{ex-tan-3-6}, because $n{\geq}3,$
when $\bq_s$ is nondegenerate. \par
Consider now the case where $\bq_s$ has rank $0{<}\pq{<}n.$ We can find a basis 
$\eq_1,\hdots,\eq_n$ of $V$ such that $\bq_s$ is represented by a matrix 
\begin{equation*}
 B= 
\begin{pmatrix}
 D & 0\\
 0 & 0
\end{pmatrix},\;\; \text{with}\;\; D= \diag(\lambdaup_1,\hdots,\lambdaup_{\pq})\in\K^{\pq{\times}\pq},
\;\text{with}\; \lambdaup_1{\cdots}\lambdaup_\pq{\neq}0.
\end{equation*}
Let $A$ be the antisymmetric $n{\times}n$ matrix representing $\bq_a$ in this basis and 
$A_1,\hdots,A_n$ its rows. Then the matrix $M(\zq)$ of \eqref{eq-6.3} has the form 
\begin{equation*}
 M(\zq)=(M_1(\zq),M_2(\zq)),
\end{equation*}
with $M_1(\zq)=(\zq_1{E}_1,\hdots,\zq_\pq{E}_{\pq})$ for invertible
$n{\times}n$ matrices  $E_1,\hdots,E_q$ and 
\begin{equation*}
 M_2(\zq)= 
\begin{pmatrix}
 A_2\zq & \hdots & A_n\zq & 0 & \hdots & 0 & \hdots \\
 {-}A_1\zq & \hdots & 0 &  A_{3}\zq & \hdots & A_n\zq &\hdots \\  
 0 & \hdots & 0 & {-}A_2(\zq) & \hdots & 0 & \hdots \\
 \vdots & \ddots & \vdots &\vdots & \ddots &\vdots & \hdots\\
 0 & \hdots & {-}A_1\zq & 0 & \hdots & {-}A_2\zq&\hdots\\
\end{pmatrix}
\end{equation*}
In particular, when $\zq_i{=}0$ for $1{\leq}i{<}n,$ the first row of $M(\zq)$ is zero
because $A$ is antisymmetric and therefore $\gt$ is infinite dimensional by 
Theorem~\ref{thm-tan-6.1} (here $W_{(\Fb)}=V_{(\Fb)}$ because $\Ki_{{-}3}{=}\ft_{[3]}(V)$).
\end{exam}

\providecommand{\bysame}{\leavevmode\hbox to3em{\hrulefill}\thinspace}
\providecommand{\MR}{\relax\ifhmode\unskip\space\fi MR }

\providecommand{\MRhref}[2]{%
  \href{http://www.ams.org/mathscinet-getitem?mr=#1}{#2}
}
\providecommand{\href}[2]{#2}

\end{document}